\newtheorem{theorem}{Theorem}[section]
\newtheorem{lemma}[theorem]{Lemma}
\newtheorem{corollary}[theorem]{Corollary}
\newtheorem{prop}[theorem]{Proposition}
\newtheorem{remark}[theorem]{Remark}
\newcounter{questions}
\newtheorem{question}[questions]{Question}
\newcommand{\parag}[1]{\vspace{2mm}

\noindent{\bf #1} }
\newcommand{\ZZ}{\ensuremath{\mathbb Z}}
\newcommand{\RR}{\ensuremath{\mathbb R}}
\newcommand{\CC}{\ensuremath{\mathbb C}}
\newcommand{\FF}{\ensuremath{\mathbb F}}
\newcommand{\VV}{\mathbf{V}}
\newcommand{\vv}{\vec{v}}
\newcommand{\pts}{\mathcal P}
\newcommand{\qts}{\mathcal Q}
\newcommand{\circs}{\mathcal C}
\newcommand{\lines}{\mathcal L}
\newcommand{\curves}{\Gamma}
\DeclareMathOperator{\cross}{cr}
\newcommand{\Szekely}{Sz\'{e}kely}
\title{On Bipartite Distinct Distances in the Plane\thanks{This research was done as part of the 2019 CUNY Combinatorics REU, supported by NSF awards DMS-1802059 and DMS-1851420.}}
\author{Surya Mathialagan\thanks{Divison of Computing \& Mathematical Sciences, California Institute of Technology, Pasadena, CA 91125. \href{mailto:surya.math@caltech.edu}{surya.math@caltech.edu}. Supported by Caltech's Summer Undergraduate Research Fellowships (SURF) Program and the Olga Taussky-Todd Award.} }
\newcommand{\enumsep}{-1mm}
\begin{document}

\maketitle

\begin{abstract}
	Given sets $\pts, \qts \subseteq \RR^2$ of sizes $m$ and $n$ respectively, we are interested in the number of distinct distances spanned by $\pts \times \qts$. Let $D(m, n)$ denote the minimum number of distances determined by sets in $\RR^2$ of sizes $m$ and $n$ respectively, where $m \leq n$. Elekes \cite{CircleGrids} showed that $D(m, n) = O(\sqrt{mn})$ when $m \leq n^{1/3}$. For $m \geq n^{1/3}$, we have the upper bound $D(m, n) = O(n/\sqrt{\log n})$ as in the classical distinct distances problem. 
	
In this work, we show that Elekes' construction is tight by deriving the lower bound of $D(m, n) = \Omega(\sqrt{mn})$ when $m \leq n^{1/3}$. This is done by adapting \Szekely's crossing number argument.  We also extend the Guth and Katz analysis for the classical distinct distances problem to show a lower bound of $D(m, n) = \Omega(\sqrt{mn}/\log n)$ when $m \geq n^{1/3}$.
\end{abstract}

\section{Introduction}

	Given a set $\pts \subseteq \RR^2$ of $n$ points, let $D(\pts)$ denote the number of distinct distances spanned by pairs of points from $\pts$. We define $D(n) = \min_{|\pts| = n} D(\pts)$, i.e., the minimum number of distinct distances determined by $n$ points in $\RR^2$. In his celebrated paper, Erd\"{o}s \cite{ErdosOriginal} showed that a $\sqrt{n} \times \sqrt{n}$ section of the integer lattice $\ZZ^2$ (see Figure \ref{fig:lattice}) determines $\Theta(n/\sqrt{\log n})$ distances.
	
	\begin{figure}[ht]
	    \centering
	    \begin{subfigure}[b]{0.4\textwidth}
	        \includegraphics{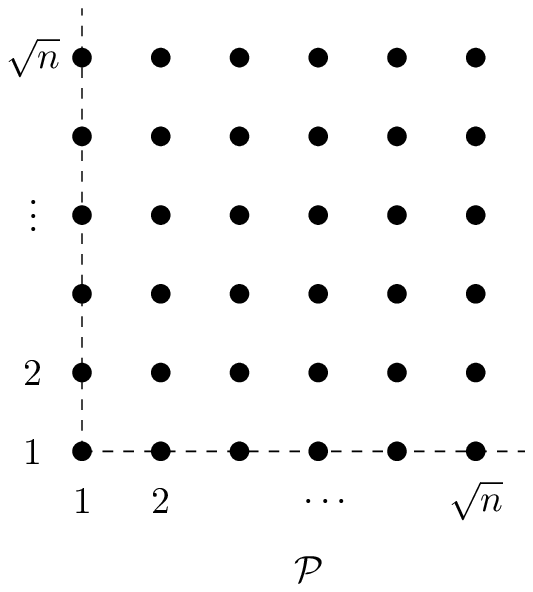}
    	    \caption{}\label{fig:lattice}
	    \end{subfigure}
	    \hspace{1cm}
	    \begin{subfigure}[b]{0.4\textwidth}
	        \includegraphics{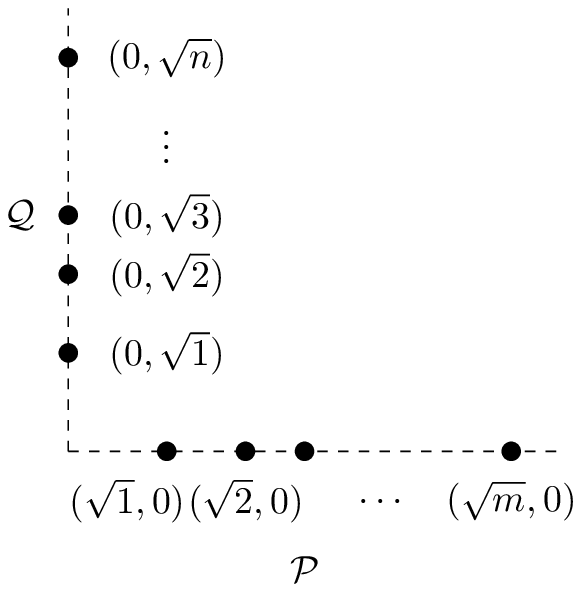}
    	    \caption{}\label{fig:orthogonal_example}
	    \end{subfigure}
	    \caption{(A) $\sqrt{n} \times \sqrt{n}$ section of $\ZZ^2$ spanning $\Theta(n/\sqrt{\log n})$ distances. (B) $\pts = \{(\sqrt{i}, 0): 1 \leq i \leq m\}$ and $\qts = \{(0, \sqrt{j}) : 1 \leq j \leq n\}$. Although $\pts$ and $\qts$ span many distances, $\pts \times \qts$ span few distances.}
	\end{figure}
	
	\begin{theorem}[\cite{ErdosOriginal}]
	    $D(n) = O(n/\sqrt{\log n})$.
	\end{theorem}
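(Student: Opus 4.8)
The plan is to prove the bound by analyzing a single explicit configuration—a section of the integer lattice—and reducing the count of distinct distances to a classical estimate on sums of two squares. First I would set $N = \lceil \sqrt{n}\,\rceil$ and take $\pts_0 = \{0,1,\dots,N-1\}^2 \subseteq \ZZ^2$; since $|\pts_0| = N^2 \ge n$, I may choose $\pts \subseteq \pts_0$ with $|\pts| = n$, and because a subset spans only a subset of the distances, $D(n) \le D(\pts) \le D(\pts_0)$. Any two points of $\pts_0$ differ by a vector $(a,b)$ with $|a|, |b| \le N-1$, so every squared distance is an integer lying in $[0, 2N^2] = [0, O(n)]$. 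Hence $D(\pts_0)$ is at most the number of integers in $[1, O(n)]$ expressible as $a^2 + b^2$.

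The heart of the argument is therefore the number-theoretic estimate that the number of integers in $[1,M]$ representable as a sum of two squares is $O(M/\sqrt{\log M})$. I would obtain this from Fermat's two-squares theorem: $k$ is a sum of two squares iff every prime $p \equiv 3 \bmod 4$ divides $k$ to an even power. Writing such a $k \le M$ as $k = c\,d^2$ with $c$ squarefree forces $c$ to have no prime factor $\equiv 3 \bmod 4$. The count of squarefree $c \le M$ with this property is $O(M/\sqrt{\log M})$, a standard sieve/Mertens computation whose savings come from $\prod_{p \le M,\ p \equiv 3 \bmod 4}(1 - p^{-1}) \asymp (\log M)^{-1/2}$. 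Summing over $d$: the terms with $d \le M^{1/4}$ contribute $\sum_{d} O\big((M/d^2)/\sqrt{\log M}\big) = O(M/\sqrt{\log M})$ (using $\log(M/d^2) \ge \tfrac12\log M$ in that range and $\sum_d d^{-2} = O(1)$), while the terms with $d > M^{1/4}$ contribute at most $\sum_{d > M^{1/4}} M/d^2 = O(M^{3/4})$, which is absorbed. Alternatively one may simply cite Landau's theorem that this count is in fact $\Theta(M/\sqrt{\log M})$.

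Combining the two parts with $M = O(n)$ yields $D(n) \le D(\pts_0) = O(n/\sqrt{\log n})$. The main obstacle is the density estimate for sums of two squares; everything else—selecting the lattice configuration, bounding squared distances by $O(n)$, and passing to a subset of size exactly $n$—is routine. For a fully self-contained treatment the one genuinely nontrivial computation is $\prod_{p \le x,\ p \equiv 3 \bmod 4}(1 - p^{-1}) = \Theta\big((\log x)^{-1/2}\big)$, for which Dirichlet's theorem on primes in arithmetic progressions (or just $\sum_{p \le x,\ p \equiv 3 \bmod 4} p^{-1} = \tfrac12 \log\log x + O(1)$) together with partial summation suffices.
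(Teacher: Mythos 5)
Your proposal is correct and matches the argument the paper attributes to Erd\H{o}s: take a $\sqrt{n}\times\sqrt{n}$ section of $\ZZ^2$, observe that all squared distances are sums of two squares of size $O(n)$, and invoke the Landau-type density bound $O(M/\sqrt{\log M})$ for such integers up to $M$. The paper only cites this result rather than proving it, but your construction and the number-theoretic reduction are exactly the standard route it refers to.
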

	
	Erd\"{o}s conjectured that this was asymptotically tight. Although the problem is simple to state, Erd\"{o}s was only able to show a lower bound of $\Omega(n^{1/2})$. This was followed by a series of improvements over the years (for examples, see \cite{Moser, Chung, Chungetal}). \Szekely \ \cite{Szekely} and later Solymosi and T\'{o}th \cite{SolymosiToth} used a graph-theoretic approach to improve the lower bound to $\Omega(n^{4/5})$ and $\Omega(n^{6/7})$, respectively. Later, Katz and Tardos \cite{Tardos, KatzTardos} refined their arguments to show a bound of $\Omega(n^{0.8641})$. 
	
	After over 65 years, Guth and Katz \cite{GuthKatz} showed the following almost matching lower bound for $D(n)$, resolving the problem up to a factor of $\sqrt{\log n}$.
	\begin{theorem}[\cite{GuthKatz}]\label{thm:guth_katz}
		$D(n) = \Omega(n/\log n)$. 
	\end{theorem}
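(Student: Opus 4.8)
The plan is to follow the Elekes--Sharir scheme: convert the existence of few distinct distances into a point--line incidence problem in $\RR^3$, and settle the latter by the polynomial partitioning method of Guth and Katz. Suppose $\pts \subseteq \RR^2$ with $|\pts| = n$ determines exactly $x$ distinct distances. Writing $n_\delta$ for the number of ordered pairs of points of $\pts$ at distance $\delta$, Cauchy--Schwarz gives $\sum_\delta n_\delta^2 \ge (\sum_\delta n_\delta)^2 / x = n^2(n-1)^2 / x$, so the number of ``isosceles quadruples''
\[
  Q = \bigl\{(a,b,c,d) \in \pts^4 : a \ne b,\ |ab| = |cd|\bigr\}
\]
satisfies $|Q| = \sum_\delta n_\delta^2 = \Omega(n^4/x)$. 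It therefore suffices to establish the \emph{upper} bound $|Q| = O(n^3 \log n)$.

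Next, pass to the group $G$ of orientation-preserving rigid motions of the plane, a $3$-dimensional manifold, and use the parametrization of the rotations in $G$ by $\RR^3$ that sends a rotation by $\theta \ne 0$ about $(a,b)$ to $(a,b,\cot(\theta/2))$ (translations form a lower-dimensional locus handled by a separate, easier count). Under this parametrization, for each $p, p' \in \RR^2$ the set $\ell_{p,p'} = \{g \in G : g(p) = p'\}$ is a genuine \emph{line}; in particular $\ell_{p,p}$ is the ``vertical'' line of rotations fixing $p$. Let $\lines$ be the collection of all $N = n^2$ lines $\ell_{p,p'}$ with $p,p' \in \pts$. Distinct lines of $\lines$ meet in at most one point, and a motion $g$ lies on $\ell_{p,p'}$ exactly when $g(p)=p'$ with $p,p'\in\pts$; unwinding the definitions one checks that if $N_g$ denotes the number of lines of $\lines$ through $g$ then $|Q| = \sum_g N_g(N_g-1)$, a finite sum supported on the points incident to at least two lines. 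Decomposing dyadically by the richness $k = 2^j \le n$ of a point yields
\[
  |Q| \ \lesssim\ \sum_{j=1}^{\lceil \log_2 n\rceil} 2^{2j}\,\bigl|\{\, g : N_g \ge 2^j \,\}\bigr|.
\]

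The third step is to verify the non-degeneracy hypotheses that permit the Guth--Katz line-incidence theorem to be applied to $\lines$: no plane and no regulus in $\RR^3$ contains more than $O(n)$ lines of $\lines$. For planes one starts from the fact that the lines $\ell_{p,\,\cdot}$ with a common source $p$ are pairwise disjoint (a rigid motion moves $p$ to a single place), hence mutually parallel inside any fixed plane, and a short argument using the algebraic form of $G$ caps the total at $O(n)$; the regulus case is handled analogously. Since $N = n^2$, the bound $O(n)$ is exactly $O(\sqrt N)$, so the Guth--Katz theorem applies and gives, for $2 \le k \le \sqrt N$,
\[
  \bigl|\{\, g : N_g \ge k \,\}\bigr| \ =\ O\!\left(\frac{N^{3/2}}{k^2} + \frac{N}{k}\right)\ =\ O\!\left(\frac{n^3}{k^2} + \frac{n^2}{k}\right).
\]
Substituting $k = 2^j$ into the dyadic sum gives $|Q| \lesssim \sum_{j \le \log_2 n}\bigl(n^3 + n^2 2^j\bigr) = O(n^3 \log n)$, and combining with $|Q| = \Omega(n^4/x)$ yields $x = \Omega(n/\log n)$.

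The real obstacle is the Guth--Katz line-incidence estimate itself, which is the technical heart of the argument. Its proof applies polynomial partitioning (via the polynomial ham-sandwich theorem) to decompose $\RR^3$ so that each open cell meets few lines, which controls the incidences inside cells; the remaining incidences lie on the zero set $Z(p)$ of the partitioning polynomial, and bounding these requires genuine real algebraic geometry — locating and counting the flat points of $Z(p)$, using that an irreducible low-degree surface through each of whose points pass too many lines of $\lines$ must be ruled (a plane or a regulus), and invoking Cayley--Salmon/Monge-type identities on the vanishing of an associated differential form. It is precisely at this stage that the plane-and-regulus hypotheses from step three are consumed; verifying those hypotheses is the second nontrivial ingredient, whereas the Elekes--Sharir reduction and the dyadic bookkeeping above are routine.
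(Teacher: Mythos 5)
Your proposal is a correct outline of the Guth--Katz argument: the Cauchy--Schwarz reduction to isosceles quadruples, the Elekes--Sharir passage to rotations parametrized by $(a,b,\cot(\theta/2))$, the identification of quadruples with pairs of intersecting lines among the $N=n^2$ lines $\ell_{p,p'}$, the $O(\sqrt N)$ plane and regulus conditions, and the dyadic summation against the rich-point bounds all match the ESGK framework that the paper cites for this theorem and then adapts in its Sections 4--5 for the bipartite case. The paper itself does not reprove this classical statement but only invokes it, so there is nothing further to compare.
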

	To derive Theorem \ref{thm:guth_katz}, Guth and Katz used the framework proposed by Elekes and Sharir \cite{ElekesSharir} which reduces the distinct distances problem to that of counting pairwise intersections of lines in $\RR^3$. Guth and Katz also developed several sophisticated techniques relying on tools from algebraic geometry and analytic geometry to fully resolve the problem. 
	
	While the problem of finding the asymptotic value of $D(n)$ is nearly settled, many variants of the distinct distances problem are widely open. For examples, see the survey \cite{Survey}. In the current work, we consider a bipartite variant of this problem first proposed by Elekes \cite{CircleGrids}. Given sets $\pts, \qts \subseteq \RR^2$ with $m$ and $n$ points respectively, let $D(\pts, \qts)$ denote the distinct distances spanned only by $\pts \times \qts$. That is, we ignore distinct distances spanned by pairs of points from the same set. Without loss of generality, we assume that $m \leq n$.
	
	The bipartite problem behaves quite differently from the classical variant. As an example, consider the point sets shown in Figure \ref{fig:orthogonal_example}. Although $D(\pts) = \Theta(m^2)$ and $D(\qts) = \Theta(n^2)$, we have that $D(\pts, \qts) = \Theta(m + n)$.
		
	We denote $$D(m, n) = \min_{|\pts| = m, |\qts| = n} D(\pts, \qts),$$ i.e. the minimum number of distances between two point sets. Elekes \cite{CircleGrids} showed that $D(m, n) = O(\sqrt{mn})$ for $2 \leq m \leq n^{1/3}$ using a ``circle grid'' construction (described in detail in Section \ref{sec:circle_grids}). We also have the straightforward upper bound of $D(m, n) \leq D(m + n) = O(n/\sqrt{\log n})$. However, as discussed in \cite{Survey}, it is not known if Elekes' construction is tight, and the Guth and Katz analysis for the classical problem does not seem to readily extend to this case. 
    
    In the current work, we modify and extend the Elekes, Sharir, Guth, Katz (ESGK) framework to the bipartite problem to obtain the following lower bound.
    
    \begin{theorem}\label{main_thm:guth_katz}
    For $n^{1/3} \leq m \leq n$, we have that $D(m, n) = \Omega(\sqrt{mn}/\log n).$
    \end{theorem}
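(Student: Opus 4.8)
The plan is to adapt the Elekes--Sharir--Guth--Katz (ESGK) framework underlying Theorem~\ref{thm:guth_katz} to the bipartite setting. Fix $\pts, \qts \subseteq \RR^2$ with $|\pts| = m$, $|\qts| = n$, and let $Q$ be the number of ordered quadruples $(p_1, q_1, p_2, q_2) \in \pts \times \qts \times \pts \times \qts$ with $|p_1 q_1| = |p_2 q_2| > 0$. The $mn$ pairs of $\pts \times \qts$ realize $D(\pts, \qts)$ distances, at most $m$ of which are $0$, so Cauchy--Schwarz gives $Q \ge (mn - m)^2 / D(\pts, \qts) = \Omega\big((mn)^2 / D(\pts, \qts)\big)$, and it suffices to prove $Q = O\big((mn)^{3/2}\log n\big)$. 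Now each quadruple counted by $Q$ has $p_1 \neq q_1$, so it is realized by a unique orientation-preserving rigid motion $\rho$ with $\rho(p_1)=p_2$, $\rho(q_1)=q_2$. Following \cite{ElekesSharir} I would parametrize the orientation-preserving rigid motions by $\RR^3$ so that, for fixed $a,b\in\RR^2$, the motions sending $a\mapsto b$ form a line $\ell_{a,b}$; put $\mathcal{L}_\pts = \{\ell_{p,p'} : p,p'\in\pts\}$, $\mathcal{L}_\qts = \{\ell_{q,q'} : q,q'\in\qts\}$ (so $|\mathcal{L}_\pts|\le m^2$, $|\mathcal{L}_\qts|\le n^2$), and for a motion $\rho$ write $N_\pts(\rho) = |\{p\in\pts : \rho(p)\in\pts\}|$ for the number of lines of $\mathcal{L}_\pts$ through $\rho$, likewise $N_\qts(\rho)$. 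Discarding the degenerate motions (translations, rotations by $\pi$) and the at most $m^2$ coincidences $\ell_{p,p'}=\ell_{q,q'}$ coming from $\pts\cap\qts$, whose total contribution is $O((mn)^{3/2})$, one gets
\[
Q \;=\; \sum_{\rho} N_\pts(\rho)\, N_\qts(\rho) \;+\; O\big((mn)^{3/2}\big),
\]
the sum over the finite set of relevant $\rho$; equivalently $Q$ essentially counts the crossing pairs $(\ell,\ell')\in\mathcal{L}_\pts\times\mathcal{L}_\qts$.

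I would then split $\sum_\rho N_\pts(\rho)N_\qts(\rho)$ by the sizes of $N_\pts(\rho)$ and $N_\qts(\rho)$. For the motions with $N_\pts(\rho)\ge 2$ and $N_\qts(\rho)\ge 2$, using $t\le 2(t-1)$ for integers $t\ge 2$ and then Cauchy--Schwarz,
\[
\sum_{\substack{N_\pts(\rho)\ge 2 \\ N_\qts(\rho)\ge 2}} N_\pts(\rho)\,N_\qts(\rho) \;\le\; 4\Big(\sum_\rho N_\pts(\rho)\big(N_\pts(\rho)-1\big)\Big)^{1/2}\Big(\sum_\rho N_\qts(\rho)\big(N_\qts(\rho)-1\big)\Big)^{1/2}.
\]
Each factor is precisely the quantity that the Guth--Katz analysis \cite{GuthKatz} controls for a single point set: applied to $\pts$ it gives $\sum_\rho N_\pts(\rho)(N_\pts(\rho)-1) = O(m^3\log m)$, and to $\qts$ it gives $O(n^3\log n)$, so this part of $Q$ is $O\big((mn)^{3/2}\sqrt{\log m\log n}\big)=O\big((mn)^{3/2}\log n\big)$. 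The part with $N_\qts(\rho)=1$ and $N_\pts(\rho)\ge 2$ is at most $\sum_{N_\pts(\rho)\ge 2} N_\pts(\rho) = \sum_{t\ge 2}\big|\{\rho : N_\pts(\rho)\ge t\}\big| = O(m^3) = O\big((mn)^{3/2}\big)$, using the Guth--Katz bound $|\{\rho : N_\pts(\rho)\ge t\}| = O(m^3/t^2 + m^2/t)$ for the family $\mathcal{L}_\pts$ and $m\le n$.

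What remains is the essential term $\sum_{\rho:\,N_\pts(\rho)=1} N_\qts(\rho)$: up to the contribution of motions lying on exactly one line of each family it counts incidences between the $m^2$ lines of $\mathcal{L}_\pts$ and the rich points of $\mathcal{L}_\qts$, for which the naive bound is only $O(m^2n^2)$. Here I would use the structural properties of the ESGK line families from \cite{GuthKatz} (at most $O(m)$ lines of $\mathcal{L}_\pts$ and $O(n)$ lines of $\mathcal{L}_\qts$ in any plane or regulus, and at most $m$, resp.\ $n$, through a point) and run a polynomial-partitioning argument adapted to the \emph{two} families $\mathcal{L}_\pts,\mathcal{L}_\qts$ at once, the partition chosen to split both. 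The interior-of-cell crossings are then controlled much as in the classical case, while the crossings on the partitioning surface produce error terms that grow with the partition degree, together with a term of size $O(n^2)$ arising from lines of $\mathcal{L}_\qts$ that meet an $\mathcal{L}_\pts$-line in a single point; dominating this last term by $(mn)^{3/2}$ is exactly the requirement $n^2\lesssim (mn)^{3/2}$, i.e.\ $m\gtrsim n^{1/3}$, and with it this term is also $O\big((mn)^{3/2}\log n\big)$.

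Combining the three estimates gives $Q = O\big((mn)^{3/2}\log n\big)$, hence $D(\pts,\qts) = \Omega\big((mn)^2/Q\big) = \Omega(\sqrt{mn}/\log n)$; as $\pts,\qts$ are arbitrary this is Theorem~\ref{main_thm:guth_katz}. The main obstacle is the last step: the Guth--Katz polynomial method is designed for one family of lines, and executing it for two families of very different sizes — in particular bounding the contribution of motions that are rich for $\qts$ but meet only a single line of $\mathcal{L}_\pts$, i.e.\ understanding how rich points of $\mathcal{L}_\qts$ can lie on $\mathcal{L}_\pts$ — is where essentially all the difficulty, and the $n^{1/3}$ threshold, resides.
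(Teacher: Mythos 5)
Your reduction to quadruple counting and the Cauchy--Schwarz step match the paper, but the crucial choice afterwards diverges, and the divergence leaves a genuine gap. You associate to each quadruple the rigid motion with $\rho(p_1)=p_2$, $\rho(q_1)=q_2$, which produces two line families of sizes $m^2$ and $n^2$. Your own decomposition then isolates the term $\sum_{\rho:\,N_\pts(\rho)=1} N_\qts(\rho)$, whose trivial bound is $m^2n^2$, and you need to improve it by a factor of $\sqrt{mn}$; you propose a ``two-family polynomial partitioning'' but do not execute it, and you say yourself that essentially all the difficulty resides there. That is not a proof sketch with a routine step missing --- it is the entire problem. (The paper explicitly flags this exact association as the tempting wrong turn: using $m^2+n^2$ lines rather than $2mn$ lines ``leads to a much more difficult problem.'') The other two terms of your decomposition are fine: the $N_\pts,N_\qts\ge 2$ part via Cauchy--Schwarz against the single-set Guth--Katz energy bounds, and the $N_\qts=1$ part via $O(m^3)\le O((mn)^{3/2})$.

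The paper's resolution is a different assignment of motions to quadruples: it takes the unique $g$ with $g(p_1)=q_2$ and $g(q_1)=p_2$, i.e.\ a motion carrying a pair of $\pts\times\qts$ to a pair of $\qts\times\pts$. This yields two families $\lines^1=\{\ell_{p,q}\}$ and $\lines^2=\{\ell_{q,p}\}$, \emph{each of size exactly $mn$}, and the quadruple count becomes the number of intersecting pairs in $\lines^1\times\lines^2$, which is at most the number of $r$-rich points of the union of $2mn$ lines weighted by $r^2$. The single-family Guth--Katz rich-point theorems then apply verbatim once one checks that (i) no plane or point meets more than $2m$ lines (because the $2m$ subfamilies $\lines_p^i$ consist of pairwise skew lines), and (ii) no regulus contains more than $O(\sqrt{mn})$ lines unless $D(\pts,\qts)=\Omega(\sqrt{mn})$ already (this requires a characterization of reguli containing many lines of the families, via points of $\qts$ on a common line or circle). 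Translations are disposed of separately with an $O(m^2n)$ bound. If you want to salvage your route, you would need to supply the missing two-family incidence theorem; the paper's change of variables avoids needing one.
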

    Our modifications lie within the ESGK reduction, after which the results on line incidences from \cite{GuthKatz} immediately apply. We also survey known properties of such lines. The heart of our new analysis lies in Section \ref{subsec:reguli}, where we analyse and explicitly characterise reguli that contain many lines. While doing so, we also point out a minor technical mistake in the Guth-Katz analysis of reguli (see Remark \ref{rem:guth_katz_mistake}). In Proposition \ref{prop:zariski_open}, we show that it is not difficult to resolve this mistake. Note that when $m = n$, we obtain the same lower bound as in the classical problem, which is tight up to a $\sqrt{\log n}$ factor. 
    
    Interestingly, for $m \leq n^{1/3}$, we obtain a tight bound without relying on the tools introduced by Guth and Katz. Instead, we adapt an older argument by \Szekely \ \cite{Szekely} to get rid of the logarithmic factor. \Szekely \ used a graph-theoretic approach, and more specifically, the crossing lemma to introduce shorter and elegant proofs for various problems in combinatorial geometry. In particular, he derived the improved lower bound at the time of $D(n) = \Omega(n^{4/5})$ for the classical distance problem \cite{Szekely}. Adapting this proof for the bipartite variant, we get the following bound. 
    
    \begin{theorem}\label{main_thm:szekely}
        For $2 \leq m \leq n^{1/3}$, we have that $D(m, n) = \Omega(\sqrt{mn}).$
    \end{theorem}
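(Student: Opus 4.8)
The plan is to adapt \Szekely's crossing-number argument for the classical distinct distances lower bound $\Omega(n^{4/5})$ to the bipartite setting. Suppose $\pts$ has $m$ points and $\qts$ has $n$ points, and that $\pts \times \qts$ spans only $t = D(\pts, \qts)$ distinct distances. The first step is to construct a suitable drawn multigraph $G$ in the plane. For each point $q \in \qts$ and each of the $t$ distinct distances $\delta$ realized from $q$ to some point of $\pts$, consider the circle $C_{q,\delta}$ centered at $q$ of radius $\delta$; each such circle passes through at least one point of $\pts$. The vertex set of $G$ is $\pts$, and for each circle $C_{q,\delta}$ we walk along it in (say) counterclockwise order and join consecutive points of $\pts$ on that circle by an edge drawn along the arc. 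If a circle contains $k \geq 2$ points of $\pts$ it contributes $k$ edges (a closed cycle); if it contains exactly one point we discard it. The key counting identity is that $\sum_{q \in \qts}\big(\text{number of points of }\pts\text{ on circles centered at }q\big) = mn$, since every pair $(p,q) \in \pts \times \qts$ determines exactly one distance and hence lies on exactly one such circle. Thus, after deleting the one-point circles (there are at most $nt$ of them, each costing one incidence), the number of edges of $G$ is at least $mn - nt$.

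The second step is the standard dichotomy on whether $G$ is sparse or dense, using the crossing lemma applied to a simple graph obtained from $G$. First, bound the multiplicity: two distinct circles intersect in at most $2$ points, so any two vertices of $\pts$ lie on at most... actually the right bound to use is that the multiplicity of an edge is controlled because two points determine a unique circle through them with a given center-locus (the perpendicular bisector), so a pair of consecutive points can appear with bounded multiplicity — more carefully, \Szekely's argument bounds crossings directly. Two circles cross at most twice, and the circles here come in $n$ "pencils" (one per center), with at most $t$ circles per pencil; circles in the same pencil are concentric and do not cross, while two circles from different pencils cross at most twice. Hence the number of crossings in the drawing of $G$ is at most $2\binom{n}{2}t^2 = O(n^2 t^2)$. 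On the other hand, writing $E = \Omega(mn - nt)$ for the edge count and $V = m$ for the vertex count, the crossing lemma (in the multigraph form, after accounting for bounded edge multiplicity $O(1)$, which holds since a fixed pair of points lies on at most two circles total from any two given pencils — here I will need the bound that an edge between two fixed points has multiplicity $O(n)$ coming from the $n$ centers, but one divides this out appropriately as in \Szekely) gives, provided $E$ is large enough compared to $V$, a crossing count of $\Omega(E^3 / V^2)$, i.e. $\Omega((mn-nt)^3/m^2)$.

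The third step is to combine the two crossing bounds: $(mn - nt)^3 / m^2 = O(n^2 t^2)$, which rearranges (assuming we are in the regime $t \leq mn/(2n) = m/2$ so that $mn - nt \geq mn/2$, i.e. $t$ is not already $\Omega(m)$, and recalling $\sqrt{mn} \le m$ precisely when $m \ge n$... here one must be careful: since $m \le n^{1/3}$ we have $\sqrt{mn} \ge \sqrt{m \cdot m^3} = m^2 \ge m$, so a bound of order $m$ is weaker than $\sqrt{mn}$ and the "easy" case must instead be handled by the incidence bound directly) to $n^3 m^3 / m^2 = O(n^2 t^2)$, giving $t^2 = \Omega(n m^3 / n^2) \cdot$ — let me just state the outcome: the inequality yields $t = \Omega\big(\sqrt{mn \cdot (m/n)}\big)$ type expression, and one checks that in the range $m \le n^{1/3}$ the dominant term is exactly $\Omega(\sqrt{mn})$, matching Elekes. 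In the complementary (sparse) case where the crossing lemma does not apply, $E = O(V) = O(m)$, meaning $mn - nt = O(m)$, hence $t = \Omega(n) \ge \Omega(\sqrt{mn})$ again since $m \le n$. The main obstacle I anticipate is getting the multigraph multiplicity bookkeeping exactly right — \Szekely's original argument must count how many times a given arc-edge between two fixed points of $\pts$ can recur across different circles, and transplanting this to the bipartite case (where centers come from $\qts$ rather than $\pts$) requires re-checking that the multiplicity is $O(n)$ and that dividing by it in the crossing lemma still leaves the $\sqrt{mn}$ bound intact in the relevant range; I would also need to verify at which threshold $m \le n^{1/3}$ the crossing-lemma term genuinely beats the trivial $\Omega(n)$ bound, which is precisely the content of Elekes' construction being the extremal one.
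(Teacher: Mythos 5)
Your high-level plan (Sz\'ekely-style circles-and-arcs multigraph plus the crossing lemma) is the right one, but the construction as you have set it up is oriented the wrong way, and this is fatal rather than cosmetic. You center the circles at the $n$ points of $\qts$ and put the vertices on the $m$ points of $\pts$, getting an edge count of $mn - nt$. In the range $m \le n^{1/3}$ the bound you are trying to prove is $t \gtrsim \sqrt{mn} \ge m^2 \ge m$, so you cannot assume $t \le m/2$; indeed every circle centered at a point of $\qts$ may contain a single point of $\pts$, in which case your graph has no edges at all while $D(\pts,\qts)$ is still far below $\sqrt{mn}$. Your fallback for this case also does not close the gap: $mn - nt = O(m)$ gives $t \ge m - O(m/n) = \Omega(m)$, not $\Omega(n)$, and $\Omega(m)$ is much weaker than $\Omega(\sqrt{mn})$ here. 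The paper reverses the roles: set $t = \max_{p\in\pts} D(p,\qts)$, draw at most $t$ concentric circles around each of the $m$ points of $\pts$ covering all of $\qts$, and take the vertex set to be $\qts$. Then each center contributes exactly $n$ arcs, so $|E| = mn - O(mt) = \Theta(mn)$ once $t = o(\sqrt{mn}) = o(n)$, the number of circles is $O(mt)$ so $\cross(G) = O(m^2t^2)$, and the crossing lemma yields $m^2t^2 \gtrsim (mn)^3/n^2 = m^3n$, i.e.\ $t \gtrsim \sqrt{mn}$.

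The second gap is the multiplicity bookkeeping, which you flag but resolve incorrectly: dividing the crossing-lemma lower bound by a worst-case multiplicity of order $n$ (or even $m$) destroys the $\sqrt{mn}$ bound. The correct move is not to divide by the maximum multiplicity but to show that only $o(mn)$ edges have multiplicity above a constant, delete them, and apply the multigraph crossing lemma with multiplicity $O(1)$. Concretely (in the correct orientation), if $u,v \in \qts$ are joined by $\ge r$ parallel arcs then they are consecutive on $\ge r$ circles with distinct centers in $\pts$, so the perpendicular bisector of $uv$ contains $\ge r$ points of $\pts$; applying the Szemer\'edi--Trotter rich-line bound to the $m$ points of $\pts$ shows the total number of such edge occurrences is $O(tm^2/r^2 + tm\log m)$. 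For constant $r$ this is $O(\sqrt{mn}\cdot m^2) = O(mn)$ precisely because $m^3 \le n$ --- this is the one place the hypothesis $m \le n^{1/3}$ enters, and it is the step your sketch would need to make precise before the argument goes through.
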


    This shows that Elekes' construction is indeed tight, completely resolving the bipartite distinct distances problem in this range.
    
    Table \ref{table:summary} summarises the current results for $D(m, n)$, for each range of $m$. All of the lower bounds are from this work.
    \begin{table}[ht]
    \centering 
    {\renewcommand{\arraystretch}{1.2}
        \begin{tabular}{c|c|c}
            \hline 
            Range of $m$ & Lower Bound & Upper Bound\\
            \hline 
            \hline 
            $m = 1$ & 1 & 1\\
            $2 \leq m \leq n^{1/3}$ & $\Omega(\sqrt{mn})$ & $O(\sqrt{mn})$\\
            $n^{1/3} \leq m \leq n^{1/2}/\log^{1/4} n$ & $\Omega(\sqrt{mn}/\log n)$ & $O(m^2)$\\
            $n^{1/2}/\log^{1/4} n \leq m \leq n$ & $\Omega(\sqrt{mn}/\log n)$ & $O(n/\sqrt{\log n})$\\
            \hline 
        \end{tabular}
    }
    \caption{Bounds on $D(m, n)$ for different ranges of $m$. For the upper bounds, see Corollary \ref{corr:upper_bounds}. The lower bounds come from Theorem \ref{main_thm:guth_katz} and Theorem \ref{main_thm:szekely}.} \label{table:summary}
    \end{table}
    
    From Table \ref{table:summary}, we see that there is still a gap between the upper and lower bounds when $n^{1/3} \leq m \leq n$. 
    
    \begin{question}
        What is the asymptotic value of $D(m, n)$ when $n^{1/3} \leq m \leq n$?
    \end{question}
    
    Improving the bound when $m = n$ would also eliminate the gap for $D(n)$. As shown by Guth and Katz \cite{GuthKatz}, such an improvement cannot be obtained from the ESGK framework without making significant changes.
    
    While $D(m, n) = \Theta(\sqrt{mn})$ for $m \leq n^{1/3}$, we have that $D(m, n) = o(\sqrt{mn})$ when $m = n$. Therefore, somewhere in the range $n^{1/3} \leq m \leq n$, we are able to achieve a better asymptotic bound. It would be interesting to find the smallest $m$ for which $D(m, n) = o(\sqrt{mn})$. 
    
    It can be shown that if all the points in $\pts$ lie on a line, then $D(\pts, \qts) = \Omega(\sqrt{mn})$. See Remark \ref{rem:szekely_set_line} for a crossing-based proof, and see Exercise 3.4 in \cite{AdamTextbook} for an incidence proof. Therefore, any construction with $D(\pts, \qts) = o(\sqrt{mn})$ would have to be structurally different from Elekes' construction. 

    \parag{Outline.} In Section \ref{sec:circle_grids}, we restate Elekes' construction for bipartite distinct distances, and present the best known upper bounds on $D(m, n)$ for all $1 \leq m \leq n$. In Section \ref{sec:crossing_number}, we will introduce \Szekely's crossing number approach and prove Theorem \ref{main_thm:szekely}. We then present the modified ESGK reduction for the bipartite problem in Section \ref{sec:ESGK} and outline the proof of Theorem \ref{main_thm:guth_katz}. We analyse the structure of the lines obtained through the reduction in Section \ref{sec:planes_reguli}, and complete the proof of Theorem \ref{main_thm:guth_katz}.
    
    \parag{Notation.} For points $p, q \in \RR^d$, we will denote the distance between them as $d(p, q)$. For sets of points $\pts, \qts \subset \RR^d$, we define $$d(\pts, \qts) = \min_{p \in \pts, q \in \qts} d(p, q).$$
    We will say that $A \lesssim B$ (respectively, $A \gtrsim B)$ if there exists some constant $c > 0$ such that $A \leq cB$ (respectively, $A \geq cB$). We use $O_{v_1, v_2, \dots, v_k}$ to represent the usual big-$O$ notation where the constant of proportionality depends on the variables $v_1, \dots, v_k$. We define $\Omega_{v_1, v_2, \dots, v_k}$ and $\Theta_{v_1, v_2, \dots, v_k}$ symmetrically. 
\section{Elekes' Circle Grid Construction}\label{sec:circle_grids}

In this section, we explicitly restate Elekes' circle grid construction. We present it in a simplified manner and extend it to the case where $m \geq n^{1/3}$. 

\parag{Construction.} Suppose $2 \leq m \leq n^{1/3}$ and set $s = \sqrt{n/m}$. Consider the following sets of points: 
    \begin{equation}\label{eq:elekes_construction}
        \begin{split}
            \pts &= \{(a, 0) : 1 \leq a \leq m\},\\
            \qts &= \{(i, \sqrt{j}) : 1 \leq i \leq s, s^2 + 1 - i^2 \leq j \leq s^2 + ms - i^2\}. 
        \end{split}
    \end{equation}
    In Elekes' formulation, $\qts$ is viewed as the intersections of $m$ vertical lines and $\sqrt{mn}$ circles centered at each point in $\pts$. As noted in \cite{ResearchProblems}, this arrangement of points can be viewed as the ``hyperbolic image'' of the usual $n$ lattice points and $m\sqrt{mn}$ straight lines with many incidences, embedded in the Poincar\'{e} model of the upper half plane.
       
    \begin{prop}\label{prop:circle_grid_analysis}
        For the sets defined in \eqref{eq:elekes_construction}, we have $D(\pts, \qts) = \Theta(\sqrt{mn})$. 
    \end{prop}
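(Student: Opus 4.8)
The plan is to reduce the proposition to an elementary count of integer values. For $p = (a,0) \in \pts$ and $q = (i,\sqrt{j}) \in \qts$ we have $d(p,q)^2 = (a-i)^2 + j$, and since all distances are nonnegative, $D(\pts,\qts)$ equals the number of distinct values of $(a-i)^2 + j$ as $(p,q)$ ranges over $\pts \times \qts$. Writing $j = s^2 + \ell - i^2$, the constraint on $j$ in \eqref{eq:elekes_construction} becomes $1 \le \ell \le ms$, and the squared distance simplifies to $(a-i)^2 + s^2 + \ell - i^2 = s^2 + a^2 - 2ai + \ell$. So I must count the distinct values of $f(a,i,\ell) := a^2 - 2ai + \ell$ over integers $1 \le a \le m$, $1 \le i \le s$, $1 \le \ell \le ms$. (As is standard for such constructions, I will assume $s = \sqrt{n/m}$ and $ms = \sqrt{mn}$ are integers; rounding changes only constants, and a direct count then also confirms $|\qts| = s \cdot ms = n$.)

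For the upper bound $D(\pts, \qts) = O(\sqrt{mn})$, I will check that all values of $f$ lie in a single interval of length $O(\sqrt{mn})$. The term $\ell$ contributes a range of size $ms = \sqrt{mn}$; the term $-2ai$ lies in $[-2ms,\,-2]$, of size $O(ms)$; and the term $a^2$ lies in $[1, m^2]$, where the hypothesis $m \le n^{1/3}$ --- equivalently $m \le \sqrt{n/m} = s$ --- gives $m^2 \le ms = \sqrt{mn}$. Hence $f(a,i,\ell)$ ranges over an interval of length $O(\sqrt{mn})$ centred near $0$ (so the squared distances lie in an interval of length $O(\sqrt{mn})$ near $s^2$), and there are $O(\sqrt{mn})$ distinct squared distances, hence distances.

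For the lower bound $D(\pts, \qts) = \Omega(\sqrt{mn})$ it is enough to produce that many distinct distances, and for this I only need one point of $\pts$ and one vertical line of $\qts$. Taking $a = m$ and $i = 1$, the points $(1, \sqrt{j})$ with $s^2 \le j \le s^2 + ms - 1$ all lie in $\qts$, and the squared distances $(m-1)^2 + j$ run through $ms$ consecutive integers, hence through $ms = \sqrt{mn}$ distinct values. Combining the two bounds yields $D(\pts, \qts) = \Theta(\sqrt{mn})$.

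There is no serious obstacle here: the argument is a short computation. The only points requiring care are the bookkeeping of the three parameter ranges in the upper bound, the (exact) equivalence between the hypothesis $m \le n^{1/3}$ and the inequality $m^2 \le ms$ that makes the $a^2$ term negligible against the $\ell$ term, and the usual tacit assumption that $n/m$ is a perfect square.
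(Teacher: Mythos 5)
Your proposal is correct and follows essentially the same route as the paper: both arguments observe that the squared distances are integers confined to an interval of length $O(ms) = O(\sqrt{mn})$ (using $m \le s$, i.e.\ $m^2 \le ms$, exactly as you do), and both obtain the lower bound from a single point of $\pts$ paired with one vertical line of $\qts$, yielding $ms$ consecutive integer squared distances. Your substitution $j = s^2 + \ell - i^2$ is only a cosmetic repackaging of the paper's direct min/max computation.
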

    \begin{proof}
        Since $m \leq n^{1/3}$, note that $s = \sqrt{n/m} \geq m$. The square of the distance between the points $(i, \sqrt{j})$ and $(a, 0)$ is an integer of the form $(a - i)^2 + j$. To give an upper bound on the number of distinct distances in $\pts \times \qts$, it suffices to check the maximal and minimal values of $(a-i)^2 + j$.
        \begin{align*}
            (a - i)^2 + j &\geq a^2 - 2ai + i^2 + s^2 + 1 - i^2 = a^2 - 2ai + s^2 + 1\\
            &\geq a^2 - 2as + s^2 + 1 \geq m^2 - 2ms + s^2 + 1.
        \end{align*}
        The last inequality follows from the fact that $a^2 - 2as$ is minimised when $a = m$ (recall that $m \leq s$). Similarly,
        \begin{align*}
            (a - i)^2 + j &\leq a^2 - 2ai + i^2 + s^2 + ms - i^2 = a^2 - 2ai + s^2 + ms\\
            &\leq a^2 - 2a + s^2 + ms \leq m^2 - 2m + s^2 + ms.
        \end{align*}
        Therefore, the number of distinct distances is at most $$D(\pts, \qts) \leq (m^2 - 2m + s^2 + ms) - (m^2 - 2ms + s^2 + 1) + 1 = 3ms = O(\sqrt{mn}).$$
        Moreover, the point $(1, 0)$ has $ms = \sqrt{mn}$ distances with points of the form $(1, \sqrt{j}) \in \qts$. Therefore, $D(\pts, \qts) \geq \sqrt{mn}$. Combining these upper and lower bounds, we have the desired result. 
    \end{proof}
    
    \begin{remark}\label{remark:circle_grid} Although the construction is still well defined when $m > n^{1/3}$, one of the main steps in the above analysis requires $m \leq s$. A similar analysis shows that the construction spans $\Theta(m^2)$ distances when $m > n^{1/3}$.  
    \end{remark}
    
    Now, we state the current best upper bounds for this problem in various ranges. 
    \begin{corollary}\label{corr:upper_bounds}
        For $2 \leq m \leq n$, we have that
        $$ 
        D(m, n) = 
        \begin{cases}
            1 & m = 1\\
            O(\sqrt{mn}) & 2 \leq m \leq n^{1/3}\\
            O(m^2) & n^{1/3} \leq m \leq n^{1/2}/\log^{1/4} n\\
            O(n/\sqrt{\log n}) & n^{1/2}/\log^{1/4} n \leq m \leq n. 
        \end{cases}
        $$
    \end{corollary}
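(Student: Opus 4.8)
The plan is to establish each of the four cases by exhibiting an explicit pair $(\pts,\qts)$ of the right sizes, using only the circle grid of \eqref{eq:elekes_construction} together with the classical lattice; since $D(m,n)$ is a minimum over constructions, each such example immediately yields an upper bound.

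The case $m=1$ is immediate: put the single point of $\pts$ at the origin and spread the $n$ points of $\qts$ on the unit circle, so that every distance in $\pts\times\qts$ equals $1$ and $D(1,n)=1$. The case $2\le m\le n^{1/3}$ is precisely Proposition~\ref{prop:circle_grid_analysis}: the sets in \eqref{eq:elekes_construction} have sizes $m$ and $n$ and span $\Theta(\sqrt{mn})$ distances, so $D(m,n)=O(\sqrt{mn})$.

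For the range $n^{1/3}\le m\le n$ I would apply Proposition~\ref{prop:circle_grid_analysis} not to the pair of sizes $(m,n)$ but to the pair $(m,m^3)$. This is legitimate because $m\le (m^3)^{1/3}$, and with this choice $s=\sqrt{m^3/m}=m$ is automatically an integer while each of the $s$ columns carries exactly $m^2$ admissible values of $j$, so the construction produces sets $\pts,\qts$ with $|\pts|=m$, $|\qts|=m^3$ and $D(\pts,\qts)=\Theta(\sqrt{m\cdot m^3})=\Theta(m^2)$. Since $m\ge n^{1/3}$ forces $m^3\ge n$, we may delete points from $\qts$ to obtain $\qts'\subseteq\qts$ with $|\qts'|=n$; deleting points can only remove distances, so $D(m,n)\le D(\pts,\qts')\le D(\pts,\qts)=O(m^2)$. (This is the content of Remark~\ref{remark:circle_grid}.) On the other hand, the bound $D(m,n)\le D(m+n)$ noted in the introduction, combined with the \Erdos\ lattice bound $D(N)=O(N/\sqrt{\log N})$ from \cite{ErdosOriginal} applied to $N=m+n\le 2n$, gives $D(m,n)=O(n/\sqrt{\log n})$ for every $m\le n$. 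The estimate $O(m^2)$ is the smaller of the two exactly when $m^2\le n/\sqrt{\log n}$, i.e.\ when $m\le n^{1/2}/\log^{1/4}n$; keeping the better bound in each of the two subranges produces the last two lines of the statement.

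I do not expect a genuine obstacle here: the only things that need care are checking that Proposition~\ref{prop:circle_grid_analysis} really does apply with the inflated parameter $m^3$ in place of $n$ (the hypothesis $m\le n^{1/3}$ degenerates to the equality $m\le(m^3)^{1/3}$, and one should confirm the construction then has exactly $m^3$ points), and the observation that restricting $\qts$ to an $n$-element subset cannot create new distances. If one prefers to avoid the parameter substitution, the alternative is to redo the short computation of Proposition~\ref{prop:circle_grid_analysis} directly in the regime $m\ge s$, where the range of the integers $(a-i)^2+j$ is governed by the $O(m^2)$ term coming from $(a-i)^2$ rather than by the length $ms$ of the $j$-interval; this is exactly what Remark~\ref{remark:circle_grid} asserts.
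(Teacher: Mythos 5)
Your proposal is correct and follows essentially the same route as the paper: the $m=1$ case via a single circle, Proposition~\ref{prop:circle_grid_analysis} directly for $2\le m\le n^{1/3}$, the substitution $D(m,n)\le D(m,m^3)=O(m^2)$ for $m\ge n^{1/3}$, and $D(m,n)\le D(m+n)=O(n/\sqrt{\log n})$ for the last range. The only difference is that you spell out the monotonicity step (restricting $\qts$ to an $n$-point subset cannot create new distances) and the comparison $m^2\lessgtr n/\sqrt{\log n}$ that determines the crossover at $m=n^{1/2}/\log^{1/4}n$, which the paper leaves implicit.
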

    \begin{proof}
        $D(1, n) = 1$ since one can place all the points in $\qts$ on a circle centered at the point in $\pts$. The bound in the range $2 \leq m \leq n^{1/3}$ follows from Proposition \ref{prop:circle_grid_analysis}. If $m \geq n^{1/3}$, by Proposition \ref{prop:circle_grid_analysis}, $D(m, n) \leq D(m, m^3) = O(m^2).$ Finally, we also have the straightforward bound: $D(m, n) \leq D(m + n) = O(n/\sqrt{\log n}).$
    \end{proof}
    
\section{Crossing Number Arguments}\label{sec:crossing_number}

    \Szekely \ \cite{Szekely} showed that many results in discrete geometry could be obtained using crossing numbers. For example, he presented short proofs for the point-line incidence problem and the unit distances problem. In particular, Sz\'{e}kely showed the following bound which led to the best known lower bound at that time for the classical distinct distances problem.
    
    \begin{theorem}[\cite{Szekely}] \label{thm:szekely_classical}
        For any set of $n$ points $\pts$, there exists $p \in \pts$ that determines $\Omega(n^{4/5})$ with $\pts\setminus \{p\}$.
    \end{theorem}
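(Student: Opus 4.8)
The plan is to run \Szekely's crossing-number argument on a graph built from concentric circles. Suppose for contradiction that every point of $\pts$ determines at most $t$ distinct distances with the other $n-1$ points, where $t \le c\, n^{4/5}$ for a suitable small absolute constant $c$; I aim to reach a contradiction. For each $p \in \pts$, the remaining $n-1$ points lie on a family $\circs_p$ of at most $t$ circles concentric at $p$, one for each distance value realized at $p$. (If instead $t > c\, n^{4/5}$, the conclusion already holds, so the assumption is harmless.)

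Next I would build a plane drawing of a multigraph $G$ with vertex set $\pts$: for each $p$ and each circle $C \in \circs_p$ carrying at least three points of $\pts$, walk around $C$ and join cyclically consecutive points of $\pts \cap C$ by the arcs of $C$ between them, drawn along $C$ itself. Two elementary estimates do the work. Edge count: for a fixed $p$, the circles of $\circs_p$ carrying at most two points absorb at most $2t$ of the $n-1$ points, so $p$ contributes at least $n-1-2t$ arcs; as $t \ll n$, this gives $|E(G)| = \Omega(n^2)$. Crossing count: arcs of one circle are pairwise interior-disjoint, and two distinct circles meet in at most two points, so each pair of circles contributes at most two crossings; since $\big|\bigcup_p \circs_p\big| \le nt$, we get $\cross(G) = O(n^2 t^2)$.

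Were $G$ simple, the crossing lemma would already finish the job: $\cross(G) = \Omega(|E(G)|^3/n^2) = \Omega(n^4)$, which dwarfs $O(n^2 t^2)$ whenever $t = o(n)$. The difficulty is that $G$ is a genuine multigraph; the multiplicity of an edge $\{q,q'\}$ is at most the number of points of $\pts$ on the perpendicular bisector of $q$ and $q'$, and this is exactly where collinear configurations enter, in the spirit of the ``$\pts$ on a line'' situation of Remark \ref{rem:szekely_set_line}. I would dispose of the collinear case first: if some line carries $k$ points of $\pts$, its extreme point already realizes $k-1$ distinct distances, so I may assume every line carries at most $t+1$ points, and hence $G$ has maximum edge multiplicity at most $t+1$. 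The multigraph form of the crossing lemma, $\cross(G) = \Omega\!\big(|E(G)|^3/(t\, n^2)\big)$, then combines with $\cross(G) = O(n^2 t^2)$ to pin $t$ from below.

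The step I expect to be the main obstacle is squeezing this to the sharp exponent $4/5$. Feeding the worst-case multiplicity bound $t+1$ in uniformly only yields $t = \Omega(n^{2/3})$; to do better one cannot afford the full multiplicity on every edge. The natural remedy is a finer accounting: only $O(n^2/k^2)$ lines can carry $k$ points of $\pts$, so the pairs supporting heavy multiplicity are scarce and should be stripped off before the crossing inequality is applied, with the remaining multiplicity handled at a lower level; keeping exact track of the multiplicity dependence in the multigraph crossing bound is what one then balances against the line-richness count. By contrast, the construction of $G$ and the two counting estimates are routine; this interplay between the richness of lines and the multigraph crossing bound is the delicate part.
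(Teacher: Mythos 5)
Your setup is exactly \Szekely's (and the one this paper adapts for the bipartite case): concentric circles, the multigraph on $\pts$ with arcs between consecutive points, $|E|=\Omega(n^2)$, $\cross(G)=O(n^2t^2)$, and the observation that the multiplicity of $\{q,q'\}$ is controlled by the number of points of $\pts$ on the perpendicular bisector of $q$ and $q'$. But the proposal stops at precisely the step that produces the exponent $4/5$, and the one quantitative tool you name for that step is too weak to deliver it. The count ``only $O(n^2/k^2)$ lines carry $k$ points'' is the trivial double-counting bound; feeding it into the dyadic stripping argument bounds the number of edge-copies on $r$-rich bisectors only by $O(tn^2/r)$, which forces the deletion threshold up to $\mu\sim t$ and returns you to $t=\Omega(n^{2/3})$ --- no better than the uniform multiplicity bound you already rejected. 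To reach $4/5$ you need the Szemer\'edi--Trotter-strength statement (Theorem \ref{thm:rich_line_bound}): the number of $r$-rich lines is $O(n^2/r^3+n/r)$. With that, the analogue of Proposition \ref{prop:bisector_bound} for $m=n$ gives that the total number of edge-copies whose bisector is $r$-rich is $O(tn^2/r^2+tn\log n)$, because an $r$-rich bisector meets at most $2kt$ arcs when it carries $k$ points.

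Once you have that bound, the missing balancing is short and you should carry it out: delete all copies of every edge of multiplicity at least $\mu=K\sqrt{t}$. The number of deleted edges is $O\left(tn^2/\mu^2+tn\log n\right)=O\left(n^2/K^2+tn\log n\right)$, which for large constant $K$ (and under the standing assumption $t\lesssim n^{4/5}$) is at most half of the $\Theta(n^2)$ edges. The surviving multigraph has $n$ vertices, $\Theta(n^2)$ edges, and multiplicity at most $K\sqrt{t}$, so Theorem \ref{thm:multigraph_crossing_lemma} gives $\cross(G)\gtrsim n^6/(\sqrt{t}\,n^2)=n^4/\sqrt{t}$, and comparing with $\cross(G)\lesssim n^2t^2$ yields $t^{5/2}\gtrsim n^2$, i.e.\ $t\gtrsim n^{4/5}$. (This is exactly the $m=n$ case of the remark following Theorem \ref{thm:szekely_bipartite}, where the threshold is $K\sqrt{mt/n}$.) So the architecture of your argument is right, but as written it proves only $\Omega(n^{2/3})$; the genuine missing ingredients are the cubic-in-$r$ rich-line bound and the $\sqrt{t}$ deletion threshold.
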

    
    In this section, we adapt Sz\'{e}kely's approach to derive a tight bound for $D(m, n)$ when $2 \leq m \leq n^{1/3}$. We first introduce two main tools required for this proof: crossing number bounds for multigraphs and the Szemer\'{e}di-Trotter bound for the number of point-line incidences. 
    
    \parag{The Crossing Lemma.} In a \emph{drawing} of a graph, every vertex is a distinct point in the plane and every edge is a Jordan arc connecting the two corresponding vertices. We assume that the interior of every such arc does not contain vertices, that any two arcs have a finite number of intersections, and that no three arcs intersect at the same point.  We define the \textit{crossing number} $\cross(G)$ of a graph $G$ to be the minimum number of edge crossings across all drawings of $G$.
    
    Sz\'{e}kely's main tool was the following asymptotically tight lower bound for the crossing number of a graph. This was shown independently by Ajtai et al \cite{CrossingLemma} and Leighton \cite{CrossingLemma2}. 
    
    \begin{theorem}[\cite{CrossingLemma}, \cite{CrossingLemma2}]\label{thm:crossing_lemma}
        For a simple graph $G = (V, E)$ where $|V| = n$ and $|E| = e$ with $e \geq 4n$, we have that $$\cross(G) = \Omega \left(\frac{e^3}{n^2}\right).$$ 
    \end{theorem}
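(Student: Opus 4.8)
The plan is to prove the Crossing Lemma (Theorem \ref{thm:crossing_lemma}) by the standard two-step argument: first a weak, linear-in-$e$ lower bound on $\cross(G)$ coming from Euler's formula, and then amplification of that bound via random vertex sampling.

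\textbf{Step 1: the weak bound.} I would first show that every graph $G$ on $n$ vertices and $e$ edges satisfies $\cross(G) \geq e - 3n$. Take a drawing of $G$ realizing $\cross(G)$ crossings and delete one edge from each crossing pair; what remains is a plane drawing of a subgraph on $n$ vertices with at least $e - \cross(G)$ edges. For $n \geq 3$, Euler's formula bounds the number of edges of a simple planar graph by $3n - 6$, so $e - \cross(G) \leq 3n - 6 < 3n$; for $n \leq 2$ one has $e \leq 3n$ trivially, so the inequality $\cross(G) \geq e - 3n$ holds in all cases.

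\textbf{Step 2: amplification by sampling.} Fix a drawing $D$ of $G$ with exactly $\cross(G)$ crossings. A local exchange argument (rerouting near a shared endpoint strictly reduces the number of crossings) lets me assume that in $D$ no two edges sharing an endpoint cross; since $G$ is simple and edges are Jordan arcs, every crossing of $D$ therefore involves \emph{four} distinct vertices. Now form a random induced subgraph $G'$ by keeping each vertex of $G$ independently with probability $p \in (0,1]$, and let $D'$ be the induced drawing with $n'$, $e'$, $x'$ vertices, edges, and crossings respectively. Then $\EE[n'] = pn$, $\EE[e'] = p^2 e$, and $\EE[x'] = p^4 \cross(G)$, the last equality because a crossing survives in $D'$ exactly when its four distinct endpoints are all kept. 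Applying Step 1 to $G'$ gives $x' \geq \cross(G') \geq e' - 3n'$; taking expectations yields $p^4 \cross(G) \geq p^2 e - 3 p n$, hence
$$\cross(G) \;\geq\; \frac{e}{p^2} - \frac{3n}{p^3}.$$

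\textbf{Step 3: optimization.} Finally I would choose $p = 4n/e$, which lies in $(0,1]$ precisely because the hypothesis gives $e \geq 4n$. Substituting,
$$\cross(G) \;\geq\; \frac{e^3}{16 n^2} - \frac{3 e^3}{64 n^2} \;=\; \frac{e^3}{64 n^2},$$
which is the claimed bound $\cross(G) = \Omega(e^3/n^2)$.

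The one genuinely delicate point is the exchange argument in Step 2 guaranteeing that every crossing of an optimal drawing uses four distinct vertices: this is exactly what supplies the $p^4$ factor, and without it (using only the trivial fact that a crossing involves at least three vertices) the same computation would yield only the much weaker $\Omega(e^2/n)$. Everything else is linearity of expectation and a one-variable optimization, so that is where I would spend the care.
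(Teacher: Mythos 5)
Your argument is correct and complete. Note that the paper does not prove Theorem \ref{thm:crossing_lemma} at all: it is imported as a black box from Ajtai--Chv\'atal--Newborn--Szemer\'edi and Leighton, so there is no in-paper proof to compare against. What you give is the standard probabilistic amplification proof (the one popularized by Chazelle, Sharir and Welzl): the Euler-formula bound $\cross(G) \geq e - 3n$, random vertex sampling with retention probability $p$, linearity of expectation giving $p^4\cross(G) \geq p^2 e - 3pn$, and the choice $p = 4n/e$ (legal exactly because $e \geq 4n$) yielding $\cross(G) \geq e^3/(64n^2)$. You are also right to flag the one genuinely delicate point: the expectation $\EE[x'] = p^4 \cross(G)$ requires that every crossing of the optimal drawing involve four distinct vertices, which is exactly what the exchange argument (adjacent edges never cross in a crossing-minimal drawing) guarantees; without it a three-vertex crossing would survive with probability $p^3$ and the inequality would degrade to $\Omega(e^2/n)$. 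The arithmetic in the final optimization checks out, so the proposal stands as a self-contained proof of the cited lemma.
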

    
    For the purpose of Theorem \ref{thm:szekely_classical}, Sz\'{e}kely introduced an analogue of Theorem \ref{thm:crossing_lemma} for multigraphs (graphs which can have parallel edges --- multiple edges between the same pair of vertices). 
    
    \begin{theorem}[\cite{Szekely}]\label{thm:multigraph_crossing_lemma}
        For a multigraph $G = (V, E)$ where $|V| = n$, $|E| = e$ and maximum edge multiplicity $m$, if $e > 5mn$, then $$\cross(G) = \Omega\left(\frac{e^3}{mn^2}\right).$$
    \end{theorem}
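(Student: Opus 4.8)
The plan, following \Szekely, is to derive the multigraph bound from the simple-graph crossing lemma (Theorem~\ref{thm:crossing_lemma}) by passing to a suitably chosen random \emph{simple} subgraph and comparing crossing numbers. The first thing to notice is that the obvious attempt fails: if one takes a uniformly random induced subgraph, keeping each vertex independently with probability $p$, one reproduces exactly the amplification behind Theorem~\ref{thm:crossing_lemma}, but since a planar multigraph of maximum multiplicity $m$ can have up to $\Theta(mn)$ edges, the Euler-type base inequality only reads $\cross(H)\gtrsim|E(H)|-3m|V(H)|$, and optimizing $p$ then yields merely $\cross(G)=\Omega(e^3/(m^2n^2))$. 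The extra factor of $m$ must be recovered by sampling \emph{one representative edge per parallel class} instead of thinning the vertex set; this is the crux of the argument.

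Concretely, fix any drawing $\mathcal D$ of $G$ with exactly $\cross(G)$ crossings. For each parallel class $C$ (the set of edges joining a fixed pair of vertices), let $m_C\le m$ be its multiplicity; sample the classes independently, where with probability $m_C/m$ we ``activate'' $C$ and include in $G'$ one uniformly random edge of $C$, and with probability $1-m_C/m$ we include no edge of $C$. Keep all $n$ vertices. Then $G'$ is simple, $|V(G')|=n$, and $\EE|E(G')|=\sum_C m_C/m=e/m=:\mu$. Restricting $\mathcal D$ to $G'$ gives a drawing $\mathcal D'$, and the key point is that each crossing of $\mathcal D$ survives in $\mathcal D'$ with probability at most $1/m^2$: two crossing edges sharing both endpoints lie in the same class and cannot both be its representative, while two crossing edges sharing one or no endpoint lie in distinct classes $C_a\ne C_b$, so the crossing survives iff both classes are activated and both edges are selected, which by independence has probability $(m_{C_a}/m)(1/m_{C_a})(m_{C_b}/m)(1/m_{C_b})=1/m^2$. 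Summing over all $\cross(G)$ crossings and using $\cross(G')\le\operatorname{cr}(\mathcal D')$ gives $\EE[\cross(G')]\le\cross(G)/m^2$.

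To finish, I would push a matching lower bound through $\EE[\cross(G')]$. Deterministically, whenever $|E(G')|\ge4n$ Theorem~\ref{thm:crossing_lemma} gives $\cross(G')\ge c\,|E(G')|^3/n^2$ for an absolute constant $c>0$, so in every realization $\cross(G')\ge\tfrac{c}{n^2}|E(G')|^3\,\mathbf{1}[|E(G')|\ge4n]$. Taking expectations and combining with the previous paragraph yields $\cross(G)\ge\tfrac{cm^2}{n^2}\,\EE\bigl[|E(G')|^3\,\mathbf{1}[|E(G')|\ge4n]\bigr]$. Since $|E(G')|$ is a sum of independent $\{0,1\}$ variables with mean $\mu$, convexity of $x\mapsto x^3$ on $[0,\infty)$ gives $\EE|E(G')|^3\ge\mu^3$, and truncating off $\{|E(G')|<4n\}$ costs at most $(4n)^3$, so $\EE[|E(G')|^3\mathbf{1}[|E(G')|\ge4n]]\ge\mu^3-64n^3$. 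This is exactly where the hypothesis $e>5mn$ is used: it gives $\mu=e/m>5n$, hence $\mu^3-64n^3\ge(1-(4/5)^3)\mu^3=\tfrac{61}{125}e^3/m^3=\Omega(e^3/m^3)$. Substituting back gives $\cross(G)=\Omega(e^3/(mn^2))$, as claimed; as a sanity check, taking $G$ to be $m$ near-coincident copies of a crossing-lemma-extremal simple graph shows this is tight.

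The main obstacle is precisely the observation in the first paragraph: recognizing that vertex-deletion amplification is lossy by a full factor of $m$ and replacing it with per-class representative sampling, which is what forces distinct crossing edges into distinct parallel classes and produces the clean $1/m^2$ survival probability. Everything afterward is routine: the only care needed is the second-moment/Jensen bookkeeping of the last paragraph, where the quantitative hypothesis $e>5mn$ guarantees the truncated third moment is still of order $\mu^3$, and a quick check that one does \emph{not} need to first pass to an optimal drawing of $G$, since the $1/m^2$ survival bound holds verbatim for an arbitrary drawing.
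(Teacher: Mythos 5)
The paper states this result as a citation to \Szekely\ and includes no proof of its own, so there is nothing internal to compare against; your argument is correct and is essentially \Szekely's: sample one representative per parallel class so that each edge survives with probability exactly $1/m$ and each crossing with probability at most $1/m^2$ (zero for crossings within a class), then amplify via the simple-graph crossing lemma. The only, harmless, deviation from the standard writeup is in the finish --- you apply the cubic bound of Theorem~\ref{thm:crossing_lemma} and control $\EE\bigl[|E(G')|^3\,\mathbf{1}[|E(G')|\ge 4n]\bigr]$ by Jensen plus truncation, whereas the usual presentation combines the class sampling with vertex sampling and the linear bound $\cross(H)\ge |E(H)|-3|V(H)|$; both routes invoke $e>5mn$ for the same purpose and yield the same conclusion.
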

    
    \parag{The Szemer\'{e}di-Trotter Theorem.} Given a set $\pts$ of $m$ points and a set $\lines$ of $n$ lines, both in $\RR^2$, an incidence is a pair $(p, \ell) \in \pts \times \lines$ such that $p \in \ell$. Erd\"{o}s and Purdy \cite{ErdosPurdy} constructed $\pts$ and $\lines$ with $\Theta(m^{2/3}n^{2/3} + m + n)$ incidences, and conjectured that this is asymptotically optimal. This conjecture was resolved by Szemer\'{e}di and Trotter.
    
    \begin{theorem}[Szemer\'{e}di-Trotter Theorem \cite{SzeTrotter}]\label{thm:szemeredi_trotter}
        Let $\pts$ be a set of $m$ points and $\lines$ be a set of $n$ lines. The number of incidences in $\pts \times \lines$ is $O(m^{2/3}n^{2/3} + m + n)$.
    \end{theorem}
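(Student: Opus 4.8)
The plan is to prove the Szemerédi--Trotter bound via \Szekely's crossing number argument, which is the cleanest route and fits the narrative of this section. First I would dispose of the degenerate regimes. If $n < \sqrt{m}$ or $m < \sqrt{n}$ then one of the linear terms $m$ or $n$ dominates and the bound is easy: each line contains at most one incidence ``for free'' beyond the trivial count, and a direct argument (or a weak Cauchy--Schwarz bound) handles these cases. So I would assume throughout that $m \lesssim n^2$ and $n \lesssim m^2$, i.e. the main term $m^{2/3} n^{2/3}$ governs, which is exactly the regime where the crossing lemma becomes effective.

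The heart of the argument is to build a graph $G$ from the incidence structure and apply Theorem \ref{thm:crossing_lemma}. Let $I$ denote the number of incidences in $\pts \times \lines$. I would take the vertex set to be $\pts$, and for each line $\ell \in \lines$ that is incident to $k_\ell \geq 1$ points of $\pts$, I would connect those $k_\ell$ points consecutively along $\ell$, contributing $k_\ell - 1$ edges. The key observations are: the total number of vertices is $|V| = m$; the total number of edges is $e = \sum_\ell (k_\ell - 1) = I - (\text{number of lines meeting } \pts) \geq I - n$; and this graph is simple, since two distinct lines share at most one point so no pair of vertices is joined twice. Next I would bound $\cross(G)$ from above directly by geometry: any crossing of two edges is an intersection point of two of the lines, and a given pair of lines meets in at most one point, so
\begin{equation*}
    \cross(G) \leq \binom{n}{2} < \frac{n^2}{2}.
\end{equation*}

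Now I would combine the two bounds. Writing $e \geq I - n$, if $I \geq 2n$ then $e \geq I/2 \geq 4m$ provided we are in the regime where the crossing lemma applies, so Theorem \ref{thm:crossing_lemma} gives $\cross(G) = \Omega(e^3/m^2)$. Chaining this against the geometric upper bound yields
\begin{equation*}
    \frac{(I-n)^3}{m^2} \lesssim \cross(G) < \frac{n^2}{2},
\end{equation*}
from which $(I - n)^3 \lesssim m^2 n^2$, and hence $I \lesssim m^{2/3} n^{2/3} + n$. Restoring the cases handled at the start (where instead the additive $m$ or $n$ term dominates, or where $e < 4m$ so the crossing lemma does not apply and $I = O(m + n)$ trivially), I would conclude $I = O(m^{2/3} n^{2/3} + m + n)$, as claimed.

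The main obstacle is bookkeeping around the hypotheses of the crossing lemma rather than any deep difficulty: the crossing lemma requires $e \geq 4m$ (the number of edges must be large compared to the number of vertices), so I must carefully separate out the low-incidence regime where $e$ is small, and verify that in that regime the linear bound $I = O(m+n)$ holds directly. A secondary subtlety is the passage from $e \geq I - n$ to a usable lower bound on $e$; I would ensure $I$ is at least a constant multiple of $n$ before discarding the $-n$ term, folding the complementary case into the additive $n$ term of the final estimate.
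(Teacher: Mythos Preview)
The paper does not include a proof of Theorem \ref{thm:szemeredi_trotter}; it is quoted as a known result with a citation to \cite{SzeTrotter}, and the text merely remarks that ``Theorem \ref{thm:szemeredi_trotter} was one of the main theorems \Szekely\ was able to reprove using an elegant crossing number argument.'' Your proposal is precisely that \Szekely\ argument and is correct, so there is nothing to compare against --- you have supplied the proof the paper alludes to but omits.
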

    
    Theorem \ref{thm:szemeredi_trotter} was one of the main theorems \Szekely \ was able to reprove using an elegant crossing number argument. 
    
    Given a set $\pts$ and an integer $r \geq 2$, we say that a line $\ell$ is $r$\textit{-rich} if $\ell$ contains at least $r$ points from $\pts$. The Szemer\'{e}di-Trotter theorem gives immediately implies a bound on the number of $r$-rich lines.
    
    \begin{theorem}[\cite{Szekely, InclusionExclusion}]\label{thm:rich_line_bound}
        Let $\pts$ be a set of $m$ points and let $r \geq 2$. Then, the number of $r$-rich lines is $$O\left(\frac{m^2}{r^3} + \frac{m}{r}\right).$$
    \end{theorem}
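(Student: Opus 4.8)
The plan is to derive this as a direct corollary of the Szemer\'{e}di--Trotter theorem (Theorem \ref{thm:szemeredi_trotter}) via a double-counting argument, with no need for the crossing lemma itself. Let $\lines_r$ denote the set of $r$-rich lines for $\pts$ and write $k = |\lines_r|$. By definition, every line in $\lines_r$ is incident to at least $r$ points of $\pts$, so the number of incidences $I$ in $\pts \times \lines_r$ satisfies $I \geq kr$. On the other hand, Theorem \ref{thm:szemeredi_trotter} applied to $\pts$ and $\lines_r$ gives $I = O(m^{2/3}k^{2/3} + m + k)$.

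Combining the two estimates yields $kr \lesssim m^{2/3}k^{2/3} + m + k$. Here is where the hypothesis $r \geq 2$ is used: the term $k$ on the right-hand side is at most $kr/2$, so it can be absorbed into the left-hand side, leaving $kr \lesssim m^{2/3}k^{2/3} + m$. I would then split into two cases according to which of the two remaining terms on the right dominates. If $m^{2/3}k^{2/3}$ is the larger term, then $kr \lesssim m^{2/3}k^{2/3}$, and dividing by $k^{2/3}$ and cubing gives $k \lesssim m^2/r^3$. If instead $m$ is the larger term, then $kr \lesssim m$, i.e. $k \lesssim m/r$. In either case $k = O(m^2/r^3 + m/r)$, which is the claimed bound.

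I do not expect a genuine obstacle in this argument; it is a routine deduction. The only points requiring a little care are the bookkeeping of the additive terms $+m$ and $+k$ in the Szemer\'{e}di--Trotter bound, and the observation that $r \geq 2$ is exactly the condition needed to discard the linear-in-$k$ incidence term. (In particular, unlike the proof of Theorem \ref{thm:szekely_classical}, this statement does not require the multigraph crossing lemma of Theorem \ref{thm:multigraph_crossing_lemma}.)
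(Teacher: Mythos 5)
Your deduction is correct and is precisely the ``immediate consequence of Szemer\'{e}di--Trotter'' that the paper alludes to just before stating the theorem; the paper itself gives no proof, only citations, so there is nothing to diverge from. One cosmetic point: absorbing the $+k$ incidence term into $kr$ really requires $r$ to exceed twice the implicit Szemer\'{e}di--Trotter constant rather than just $r \geq 2$, but for the remaining bounded range of $r$ the trivial bound $k \leq \binom{m}{2} = O(m^2/r^3)$ already gives the claim, so your argument is complete after this routine adjustment.
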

    
    We use Theorem \ref{thm:rich_line_bound} to analyse rich \textit{perpendicular bisectors} of pairs of points in $\RR^2$. The bisector of $p$ and $q$ is the set of all points that are equidistant to $p$ and $q$. 
    
    \parag{Proof of Theorem \ref{main_thm:szekely}.} To prove Theorem \ref{main_thm:szekely}, we derive the following stronger statement. 
    
    \begin{theorem}\label{thm:szekely_bipartite}
        Consider a set $\pts$ of $m$ points and a set $\qts$ of $n$ points, with $2 \leq m \leq n^{1/3}$. Then there exists a point in $\pts$ that determines $\Omega(\sqrt{mn})$ distances with the points in $\qts$.
    \end{theorem}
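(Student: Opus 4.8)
The plan is to argue by contradiction: suppose every point of $\pts$ determined at most $t:=\eps\sqrt{mn}$ distances with $\qts$, where $\eps>0$ is a small absolute constant to be fixed at the end. I would first dispose of two degenerate situations. If $m=O(1)$, then for any two points of $\pts$ the set $\qts$ lies in the intersection of their at most $t$ concentric circles, so $n\le 2t^2$ and already some point of $\pts$ realises $\gtrsim\sqrt n\gtrsim\sqrt{mn}$ distances. If instead a constant fraction of $\pts$ lies on a single line $\ell$, I would apply the ``$\pts$ on a line'' bound (the crossing-number argument of Remark~\ref{rem:szekely_set_line}) to the pair $(\pts\cap\ell,\qts)$, obtaining a point of $\pts$ with $\gtrsim\sqrt{|\pts\cap\ell|\cdot n}\gtrsim\sqrt{mn}$ distances. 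So from now on $m$ is large and $\pts$ is not concentrated on one line.

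Next I would set up \Szekely's multigraph. For each $p\in\pts$ the $n$ points of $\qts$ lie on at most $t$ circles centred at $p$; on each such circle carrying at least three points of $\qts$, join the points consecutively in cyclic order to form a cycle, drawn with the corresponding circular arcs, and let $G=(\qts,E)$ be the union of all these cycles over all $p$. Since at most $2t$ points of $\qts$ sit on circles about $p$ with fewer than three points, $p$ contributes at least $n-1-2t\gtrsim n$ edges (here the bound $m\le n^{1/3}$ guarantees $t\ll n$), so $|E|\gtrsim mn$. For the drawing, two arcs on circles with distinct centres meet in at most two points and arcs on concentric or identical circles never cross, and there are at most $mt$ circles, so $\cross(G)\le 2\binom{mt}{2}\le (mt)^2=\eps^2 m^3 n$. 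Observe that if $\pts$ were in general position (no three collinear), then the perpendicular bisector of any pair of points of $\qts$ would contain at most two points of $\pts$, so $G$ has maximum multiplicity at most $2$; Theorem~\ref{thm:multigraph_crossing_lemma} then gives $\cross(G)=\Omega(|E|^3/n^2)=\Omega(m^3 n)$, contradicting the upper bound $\eps^2 m^3 n$ once $\eps$ is small enough. This already proves the theorem in the general-position case.

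For the general case I would exploit that the multiplicity of an edge $\{q,q'\}$ of $G$ equals the number of $p\in\pts$ for which $q,q'$ are consecutive on a circle about $p$; each such $p$ is equidistant from $q$ and $q'$, hence lies on their perpendicular bisector $\ell_{q,q'}$, so $\operatorname{mult}(\{q,q'\})\le|\ell_{q,q'}\cap\pts|$. I would split $E$ into classes $E_j$ with multiplicity in $[2^j,2^{j+1})$ for $0\le j\le\log_2 m$, writing $G_j=(\qts,E_j)$, $e_j=|E_j|$. For each $j$ with $e_j>5\cdot 2^{j+1}n$, Theorem~\ref{thm:multigraph_crossing_lemma} yields $\cross(G_j)=\Omega\!\big(e_j^3/(2^j n^2)\big)$, while $\cross(G_j)\le\cross(G)\le(mt)^2$. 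On the other hand, the $\qts$-pairs contributing to $E_j$ have $2^j$-rich perpendicular bisectors, so Theorem~\ref{thm:rich_line_bound} bounds the number of such bisector lines and hence bounds $e_j$. The aim is to combine these per-class inequalities so that, once the classes of bounded multiplicity are isolated (there the crossing lemma bites hardest, exactly as in the general-position case above, and forces $t\gtrsim\sqrt{mn}$), the remaining high-multiplicity edge weight is either negligible or forces $\pts$ to be concentrated near a line — a case already excluded — producing the desired contradiction. By $\cross$-monotonicity and $|E|\gtrsim mn$ this then yields Theorem~\ref{thm:szekely_bipartite}, and hence $D(m,n)=\Omega(\sqrt{mn})$ of Theorem~\ref{main_thm:szekely}.

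The hard part will be precisely the bookkeeping for the high-multiplicity classes. The crude estimate — (number of $2^j$-rich bisector lines, from Theorem~\ref{thm:rich_line_bound}) times (at most $n/2$ symmetric pairs of $\qts$ on each such line) times (multiplicity $\approx 2^j$) — is too lossy once $2^j$ is well below $m$, because summing it over $j$ reintroduces a factor of $m$ and only recovers the weaker bound $t\gtrsim\sqrt n$. Obtaining the full $\sqrt{mn}$ therefore requires either a sharper count of how many symmetric $\qts$-pairs are actually realised as consecutive arcs on circles centred at points of a rich line, or a carefully chosen multiplicity threshold at which the dichotomy ``$\pts$ near few lines versus crossing lemma applicable'' can be run cleanly; this is the delicate core, and it is here, together with keeping $t\ll n$, that the hypothesis $m\le n^{1/3}$ is used.
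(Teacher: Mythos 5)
Your setup is the same as the paper's: the \Szekely\ multigraph on $\qts$ whose edges are arcs between consecutive points on the concentric circles, the count $|E|=\Theta(mn)$, the upper bound $\cross(G)\lesssim (mt)^2$ from pairwise circle intersections, and the observation that the multiplicity of an edge $\{q,q'\}$ is at most the number of points of $\pts$ on the perpendicular bisector of $q,q'$, to be controlled via the rich-line consequence of Szemer\'edi--Trotter (Theorem \ref{thm:rich_line_bound}). The general-position warm-up is fine. But the argument does not close: you explicitly leave the high-multiplicity bookkeeping unresolved, and that bookkeeping is the actual content of the proof. Your proposed per-class application of the multigraph crossing lemma to each $G_j$, combined with the crude bound ``(number of $2^j$-rich bisectors) $\times$ ($\le n/2$ pairs per line) $\times\ 2^j$,'' is, as you yourself note, too lossy; and the fallback dichotomy ``the high-multiplicity weight is negligible or $\pts$ concentrates near a line'' does not match the geometry, since rich bisectors can be many different lines each containing only $\sim\sqrt m$ points of $\pts$, a configuration not excluded by your initial reduction to ``$\pts$ not concentrated on one line.''

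The missing idea is the paper's Proposition \ref{prop:bisector_bound}: instead of bounding, for each rich bisector $\ell$, the number of $\qts$-pairs on it by $n/2$, one counts pairs $(\ell,e)$ where $e$ is an actual edge of $G$ whose bisector is $\ell$. If $\ell$ contains $k$ points of $\pts$, those points are centers of at most $kt$ circles of $\circs$, and each such circle carries at most \emph{two} arcs between consecutive points whose perpendicular bisector is $\ell$ (the bisector of a chord passes through the center, and a fixed line through the center is the bisector of at most two consecutive-point arcs). Hence $\ell$ contributes at most $2kt$ pairs, and a dyadic sum over richness levels using Theorem \ref{thm:rich_line_bound} gives $O\left(tm^2/K^2+tm\log m\right)$ edges of multiplicity at least $K$. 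Under the standing assumptions $t=o(\sqrt{mn})$ and $m^3\le n$ this is at most $\frac{1}{2}|E|$ for a large constant $K$, so one simply deletes these edges and applies Theorem \ref{thm:multigraph_crossing_lemma} once, with multiplicity bound $K$, to the remaining $\Theta(mn)$ edges; this yields $m^2t^2\gtrsim m^3n$, i.e.\ $t\gtrsim\sqrt{mn}$. Without this per-circle ``at most two arcs'' count (or an equivalent sharpening), your outline establishes only the weaker bound you mention, so as written the proof has a genuine gap at its core step.
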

    
    \begin{proof}
        Let $t = \max_{p \in \pts} D(p, \qts)$ and assume that $t = o(\sqrt{mn})$ (otherwise we are done). For each point $p \in \pts$, draw at most $t$ concentric circle centered at $p$ so that each circle contains at least one point from $\qts$, and every point in $\qts$ is contained in some circle. Denote the set of resulting circles as $\circs$ and note that $|\circs| \leq mt$. We construct a corresponding topological multigraph $G = (V, E)$ as follows:
        
            \begin{enumerate}[itemsep=\enumsep]
                \item Set $|V| = n$ so that each vertex in $V$ corresponds to a point in $\qts$.   
                \item For every circle in $\circs$, for each arc between consecutive points in this circle, add an edge to the graph between the corresponding vertices.
                \item Delete edges corresponding to circles that contain at most two points.
            \end{enumerate}
        Note that a circle incident to $k$ points in $\qts$ leads to $k$ edges in Step (2). Thus, for each point $p \in \pts$, we constructed $n$ edges in the graph. After this step, $|E| = mn$. For each vertex $p \in \pts$, since there are at most $t$ circles corresponding to it, we are deleting at most $2mt = o(mn)$ edges at Step (3). We conclude that $|E| = \Theta(mn)$ after Step (3). 
        
        Consider the drawing of the graph $G$ with vertices corresponding to the points in $\qts$, and edges corresponding to the arcs of the circles in $\circs$ (with slight perturbations to avoid more than three concurrent edges). Since $|\circs| = O(mt)$ and every two circles intersect twice, we conclude that $\cross(G) \lesssim m^2t^2$. 
        
        To apply Theorem \ref{thm:multigraph_crossing_lemma}, we need an upper bound for the maximum edge multiplicity. While $G$ might have high edge multiplicity, we can delete edges to reduce the multiplicity without changing the asymptotic size of $|E|$. This is stated more precisely in the following proposition, which we prove after the current proof.
        
        \begin{prop}\label{prop:bisector_bound}
            For an integer $r \geq 2$, let $T$ be a set of pairs $(\ell, e)$ such that $e = (u, v) \in E$, the line $\ell$ is the perpendicular bisector of $u$ and $v$, and $\ell$ is incident to at least $r$ points of $\pts$. If $e$ and $e'$ are parallel edges, then $(\ell, e)$ and $(\ell, e')$ represent two distinct pairs in $T$. Then, $$|T| = O\left(\frac{tm^2}{r^2} + t m \log m \right).$$
        \end{prop}
        
        Note that if vertices $u$ and $v$ have more than $r$ edges between them, then they are consecutive on more than $r$ circles of $\circs$. This in turn implies that the perpendicular bisector of $u$ and $v$ is $r$-rich. Therefore, we can use Proposition \ref{prop:bisector_bound} for some constant $r = K$ to bound the number of edges with multiplicity at least $K$. This number is
            \begin{align*}  
                \frac{ctm^2}{K^2} + ct m \log m &\lesssim \frac{ \sqrt{mn} \cdot m^2}{K^2} \lesssim \frac{m \cdot \sqrt{m^3n}}{K^2} \lesssim \frac{mn}{K^2}
            \end{align*}
        where the last inequality follows from the fact that $m^3 \leq n$. For a sufficiently large constant $K$, we delete at most half the edges in $G$. Denote the resulting subgraph as $G'$. Then, $G'$ has $n$ vertices, $\Theta(mn)$ edges and maximum edge multiplicity $K$. Applying Theorem \ref{thm:multigraph_crossing_lemma}, we have that $$m^2t^2 \gtrsim \cross(G) \geq \cross (G') \gtrsim \frac{e^3}{Kn^2} \gtrsim \frac{m^3n^3}{n^2} = m^3n.$$  Rearranging the inequality, we get the desired bound of $t \gtrsim \sqrt{mn}$. 
    \end{proof}

    \begin{remark}
        The assumption $m = O(n^{1/3})$ is crucial in the proof. If $m = \omega(n^{1/3})$, we have to delete edges with multiplicity larger than $K\sqrt{mt/n}$ edges for some large constant $K$. Then, the above proof gives a weaker bound of $D(m, n) = \Omega(m^{3/5}n^{1/5})$. When $m = n$, we recover Theorem \ref{thm:szekely_classical}.   
    \end{remark}
    
    Although Proposition \ref{prop:bisector_bound} is a bipartite analogue of the bisector bound in \cite{Szekely}, the proof generalises immediately. We include it here for completeness. 
    
    \begin{proof}[Proof of Proposition \ref{prop:bisector_bound}]
        Let $\ell$ be a line incident to $k$ points of $\pts$. These points correspond to the centers of at most $kt$ circles in $\circs$, and each such circle contains at most two arcs $e$ such that $(\ell, e) \in T$. Hence, $\ell$ participates in at most $2kt$ pairs in $T$. 
        
        By Theorem \ref{thm:rich_line_bound}, we know that for $2^i \leq \sqrt{m}$, there are at most $cm^2/2^{3i}$ perpendicular bisectors that are $2^i$-rich. By a dyadic decomposition, the number of pairs in $T$ corresponding to $r$-rich edges for $2^i \leq \sqrt{m}$ is at most $$\sum_{i: r \leq 2^i \leq \sqrt{m}} 2t \cdot 2^{i+1} \cdot \frac{cm^2}{(2^i)^3} = 4ctm^2 \sum_{i: r \leq 2^i \leq \sqrt{m}}\frac{1}{2^{2i}} \leq 4ctm^2 \cdot  \frac{c'}{r^2} = O\left(\frac{tm^2}{r^2}\right).$$
        When $2^i \geq \sqrt{m}$, we have at most $cm/2^i$ perpendicular bisectors that are $2^i$-rich. A similar dyadic decomposition argument implies that the number of pairs of $T$ in this case is at most
        $$\sum_{i: \sqrt{m} \leq 2^i \leq m} 2t \cdot 2^{i+1} \cdot \frac{cm}{2^i} = \sum_{i: \sqrt{m} \leq 2^i \leq m} 4ctm \leq 4ctm \log m = O(tm \log m).$$
        Combining the above bounds leads to the result.
    \end{proof}
    
    As an immediate consequence of Theorem \ref{thm:szekely_bipartite}, we have the bound of $D(m, n) = \Omega(\sqrt{mn})$ when $m \leq n^{1/3}$ as asserted in Theorem \ref{main_thm:szekely}. 
    
    \begin{remark} \label{rem:szekely_set_line}
    Suppose $\pts$ is a set of $m$ points on a line $\ell$, where $2 \leq m \leq n$. Then, $\ell$ is the only 2-rich line, and it corresponds to the centers of all the circles in $\circs$. Therefore, the size of $T$ as defined in Proposition \ref{prop:bisector_bound} is bounded by $2kt$. After deleting all the edges with multiplicity at least 2, we obtain a simple graph with $\Theta(mn)$ edges. Following the rest of the argument in Theorem \ref{thm:szekely_bipartite}, we have $D(\pts, \qts) = \Omega(\sqrt{mn})$ for every $m$.      
    \end{remark}
    
    \begin{remark}
        While Theorem \ref{thm:szekely_bipartite} implies that there exists one point in $\pts$ that spans $\Omega(\sqrt{mn})$ distances, in Elekes' construction, every point $p \in \pts$ spans $\Theta(\sqrt{mn})$ distances. 
    \end{remark}

\section{Modified ESGK Reduction} \label{sec:ESGK}
To prove their distinct distances theorem, Guth and Katz \cite{GuthKatz} adapted a preceding reduction by Elekes and Sharir \cite{ElekesSharir}.
In this section, we modify this reduction for the bipartite variant. In Section \ref{sec:planes_reguli}, we complete the analysis by proving lemmas that were used in the current section. The heart of our new analysis lies in Section \ref{subsec:reguli}, where we further develop our knowledge about lines contained in a regulus. At the end of this section, we list the places where our reduction is different from the original one.

\parag{Bipartite Distance Energy.} Let $\pts$ be a set of $m$ points and $\qts$ be a set of $n$ points such that $2 \leq m \leq n$. We define the \textit{bipartite distance energy} of $\pts \times \qts$ to be the set of quadruples: $$E(\pts, \qts) = \{(p_1, q_1, p_2, q_2)\ |\ p_i \in \pts, q_i \in \qts, d(p_1, q_1) = d(p_2, q_2) \neq 0\}.$$
We now use a standard Cauchy-Schwarz argument to relate the bipartite distance energy to $D(\pts, \qts)$.

\begin{prop}\label{prop:cauchy_schwarz}
    For any set $\pts$ of $m$ points and set $\qts$ of $n$ points, we have $$D(\pts, \qts) \geq \frac{m^2n^2}{|E(\pts, \qts)|}.$$
\end{prop}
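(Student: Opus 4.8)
The plan is to carry out the standard second-moment (Cauchy--Schwarz) argument that converts an energy bound into a distinct-distances bound. For each nonzero value $\delta$ that occurs as $d(p,q)$ for some $(p,q)\in\pts\times\qts$, write
$$x_\delta \;=\; \bigl|\{(p,q)\in\pts\times\qts : d(p,q)=\delta\}\bigr|.$$
First I would record two bookkeeping identities. On one hand, summing over all occurring distances counts every ordered pair of $\pts\times\qts$ exactly once, so $\sum_\delta x_\delta = mn$ (here I assume, as one may, that no point of $\pts$ coincides with a point of $\qts$, so that every pair spans a genuine nonzero distance; otherwise one loses only a lower-order additive term in the count). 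On the other hand, a quadruple $(p_1,q_1,p_2,q_2)$ lying in $E(\pts,\qts)$ is exactly the data of a common distance value $\delta$ together with an ordered pair of distance-$\delta$ pairs $(p_1,q_1)$ and $(p_2,q_2)$; hence $|E(\pts,\qts)| = \sum_\delta x_\delta^2$, the sum running over the $D(\pts,\qts)$ distinct distances.

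Then I would apply Cauchy--Schwarz to the two vectors $(x_\delta)_\delta$ and $(1)_\delta$, each of length $D(\pts,\qts)$ and indexed by the occurring distances:
$$(mn)^2 \;=\; \Bigl(\sum_\delta x_\delta\Bigr)^2 \;\le\; D(\pts,\qts)\cdot\sum_\delta x_\delta^2 \;=\; D(\pts,\qts)\cdot |E(\pts,\qts)|.$$
Dividing through by $|E(\pts,\qts)|$ yields the claimed inequality $D(\pts,\qts)\ge m^2n^2/|E(\pts,\qts)|$.

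There is no real obstacle here: the argument is a few lines, and the only point that needs a word of care is the treatment of coincident points (which is precisely why the statement is an inequality and not an identity). The substantive content of the reduction lies entirely in what comes next --- obtaining a good upper bound on $|E(\pts,\qts)|$ by translating distance-preserving quadruples into line incidences in $\RR^3$ --- and this proposition is merely the bridge that makes such an energy bound useful.
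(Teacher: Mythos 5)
Your proof is correct and is essentially identical to the paper's: the same bookkeeping identities $\sum_\delta x_\delta = mn$ and $|E(\pts,\qts)| = \sum_\delta x_\delta^2$, followed by the same application of Cauchy--Schwarz. Your aside about coincident points is a reasonable extra precaution that the paper does not bother with, but it does not change the argument.
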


\begin{proof}   
    Let $\delta_1, \delta_2, \dots, \delta_x$ be the distinct distances between $\pts$ and $\qts$, and let $d_i$ be the number of pairs $(p, q) \in \pts \times \qts$ at distance $\delta_i$. Since each of the $mn$ pairs in $\pts \times \qts$ contributes to exactly one $\delta_i$, we get $\sum d_i = mn$. Applying the Cauchy-Schwarz inequality, we have that $$|E(\pts, \qts)| = \sum_{i=1}^x d_i^2 \geq \frac{
1}{x} \left(\sum_{i=1}^x d_i \right)^2 = \frac{m^2n^2}{D(\pts, \qts)}.$$ Rearranging, we have the desired inequality.
\end{proof}

By Proposition \ref{prop:cauchy_schwarz}, to show that $D(\pts,\qts) = \Omega(\sqrt{mn}/\log n)$, it suffices to show that $|E(\pts, \qts)| = O(m^{3/2}n^{3/2} \log n)$.  

\parag{Rigid Motions.} A transformation of $\RR^2$ is a \textit{rigid motion} if it preserves distances between points, and it is a \textit{proper rigid motion} if it also preserves orientation. Let $G$ denote the group of proper rigid motions of the plane. It is well known that $G$ consists of translations and rotations of $\RR^2$.

\begin{prop}\label{prop:unique_rigid_motion}
    For each $(p_1, q_1, p_2, q_2) \in E(\pts, \qts)$, there exists a unique $g \in G$ so that $g(p_1) = q_2$ and $g(q_1) = p_2$.
\end{prop}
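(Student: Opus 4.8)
The plan is to reduce this to the elementary fact that the full group of rigid motions of $\RR^2$ acts on ordered pairs of \emph{distinct} points with stabilizers of order two, namely $\{\mathrm{id}, \text{reflection in the line through the pair}\}$. First I would record the nondegeneracy consequences of the definition of $E(\pts,\qts)$: since $d(p_1,q_1)=d(p_2,q_2)\neq 0$, we have $p_1\neq q_1$ and $p_2\neq q_2$, so $(p_1,q_1)$ and $(q_2,p_2)$ are genuine ordered pairs of distinct points, and crucially they are at a common distance $d(p_1,q_1)=d(p_2,q_2)=d(q_2,p_2)$.

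For existence, I would exhibit $g$ explicitly rather than invoke transitivity abstractly: let $\tau\in G$ be the translation with $\tau(p_1)=q_2$; then $\tau(q_1)$ lies at distance $d(p_1,q_1)=d(q_2,p_2)$ from $q_2$, so there is a rotation $\rho\in G$ about $q_2$ with $\rho(\tau(q_1))=p_2$, and $g:=\rho\circ\tau\in G$ satisfies $g(p_1)=q_2$ and $g(q_1)=p_2$. (If one prefers, the same $g$ comes from noting that $G$ acts transitively on ordered pairs of points at any fixed positive distance.)

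For uniqueness, suppose $g,g'\in G$ both satisfy the two required equalities. Then $h:=g^{-1}g'\in G$ fixes the two distinct points $p_1$ and $q_1$, hence fixes the entire line through them; such a rigid motion is either the identity or the reflection in that line, and the reflection is orientation-reversing, hence not in $G$. Therefore $h=\mathrm{id}$ and $g=g'$.

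The only real subtlety, and thus the step to get exactly right, is the orientation bookkeeping: one must be sure that among the (exactly two) rigid motions carrying $(p_1,q_1)$ to $(q_2,p_2)$ precisely one is proper. This follows from the order-two stabilizer computation above (identity proper, reflection improper), and it is the same computation that powers the uniqueness argument. Everything else is routine. I would also remark in passing that the ``swap'' in the statement ($g(p_1)=q_2$, $g(q_1)=p_2$, rather than $g(p_1)=p_2$, $g(q_1)=q_2$) is a feature of the Elekes--Sharir setup that will matter when these motions are composed later in Section~\ref{sec:ESGK}, but it is irrelevant to the proof of existence and uniqueness itself.
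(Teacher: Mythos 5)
Your proof is correct, and the existence construction (translate $p_1$ to $q_2$, then rotate about $q_2$) is exactly the paper's; the paper obtains uniqueness directly from the observation that exactly one rotation about $q_2$ carries $\tau(q_1)$ to $p_2$, while you use the equivalent stabilizer argument that a proper rigid motion fixing the two distinct points $p_1, q_1$ must be the identity. Both routes are essentially the same and equally valid.
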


\begin{proof}
    All proper rigid motions $g$ taking $p_1$ to $q_2$ can be obtained by first translating the plane by $q_2 - p_1$, and then applying a rotation around $q_2$. Since $|p_1 - q_1| = |q_2 - p_2| \neq 0$, exactly one such rotation also takes $q_1$ to $p_2$.
\end{proof}

    Using Proposition \ref{prop:unique_rigid_motion}, we obtain a map $\varphi: E(\pts, \qts) \to G$ which associates each quadruple $(p_1, q_1, p_2, q_2) \in E(\pts, \qts)$ with the unique $g \in G$ that satisfies $g(p_1) = q_2$ and $g(q_1) = p_2$. 

    We can write $G$ as a disjoint union $G^{trans} \cup G^{rot}$, where $G^{trans}$ is the set of all translations and $G^{rot}$ is the set of rotations of $\RR^2$. Consider the subset $E^{trans}(\pts, \qts) \subseteq E(\pts, \qts)$ of quadruples that are mapped to a rigid motion in $G^{trans}$, and let $E'(\pts, \qts) = E(\pts, \qts) \setminus E^{trans}(\pts, \qts).$ 
    
    \begin{prop}\label{prop:bound_translations}
        $|E^{trans}(\pts, \qts)| = O(m^2n)$.
    \end{prop}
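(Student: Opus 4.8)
The plan is to identify exactly which quadruples of $E(\pts,\qts)$ are mapped by $\varphi$ into $G^{trans}$, and then count those directly. A translation of $\RR^2$ is determined by a single vector $v$, acting as $x \mapsto x+v$. Hence if $\varphi(p_1,q_1,p_2,q_2) = g$ is a translation, then $g(p_1)=q_2$ and $g(q_1)=p_2$ force $v = q_2-p_1 = p_2-q_1$, i.e.\ $p_1+p_2 = q_1+q_2$. Conversely, any quadruple in $\pts\times\qts\times\pts\times\qts$ satisfying $p_1+p_2=q_1+q_2$ and $p_1\neq q_1$ already lies in $E(\pts,\qts)$: from $q_2-p_2 = p_1-q_1$ we get $d(p_2,q_2)=d(p_1,q_1)\neq 0$, and such a quadruple is sent by $\varphi$ to the translation by $q_2-p_1$. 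So $E^{trans}(\pts,\qts)$ is precisely the set of such quadruples, and it suffices to bound their number.

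For the count I would organize these quadruples by the common difference vector $v := p_1-q_1 = q_2-p_2$. For $v\in\RR^2$ set $c(v) := |\{(p,q)\in\pts\times\qts : p-q=v\}|$. A quadruple with difference $v$ consists of a pair $(p_1,q_1)$ contributing to $c(v)$ together with a pair $(p_2,q_2)$ satisfying $p_2-q_2 = -v$, contributing to $c(-v)$; hence $|E^{trans}(\pts,\qts)| \le \sum_v c(v)c(-v)$. Two elementary facts then finish the argument: $c(v)\le m$ for every $v$ (for fixed $v$ and fixed $p\in\pts$ there is at most one admissible $q = p-v\in\qts$), and $\sum_v c(v) = |\pts\times\qts| = mn$. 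Therefore $\sum_v c(v)c(-v) \le \bigl(\max_v c(v)\bigr)\sum_v c(-v) \le m\cdot mn = m^2 n$, which is the claimed bound.

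I do not expect a genuine obstacle here; the only point that needs care is getting the translation condition right, since the bipartite set-up breaks the symmetry of the classical Elekes--Sharir reduction --- the condition for $g$ to be a translation becomes $p_1+p_2 = q_1+q_2$ rather than the ``parallelogram'' relation $p_1-q_1 = p_2-q_2$ one sees when both pairs come from the same set. It is also worth noting that a naive Cauchy--Schwarz bound on $\sum_v c(v)c(-v)$ only yields $O(m^{3/2}n^{3/2})$, which is too weak; the sharper $O(m^2 n)$ comes exactly from using the crude estimate $c(v)\le m$ together with the hypothesis $m\le n$.
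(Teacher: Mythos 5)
Your argument is correct and, despite the additive-energy packaging, amounts to the same count as the paper's: bounding $\sum_v c(v)c(-v)$ by $\bigl(\max_v c(v)\bigr)\cdot\sum_v c(-v)\le m\cdot mn$ is precisely the observation that fixing three of the four points (here $p_1$ together with the pair $(p_2,q_2)$, in the paper $p_1,p_2,q_1$) forces the fourth. One quibble with your closing remark: a bound of $O(m^{3/2}n^{3/2})$ would not actually be too weak for the application, since the paper only needs $|E(\pts,\qts)|=O(m^{3/2}n^{3/2}\log n)$ overall and $m^2n\le m^{3/2}n^{3/2}$ when $m\le n$.
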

    \begin{proof}
    Given $p_1, p_2 \in \pts$, $q_1 \in \qts$, there is exactly one translation $g$ mapping $g(q_1) = p_2$, and therefore there is at most one $q_2 \in \qts$ such that $g(p_1) = q_2$. The number of choices for $p_1, p_2 \in \pts$ and $q_1 \in \qts$ is $m^2n$, and each choice could be completed to at most one quadruple in $E^{trans}(\pts, \qts)$. 
    \end{proof} 
    
    To prove Theorem \ref{main_thm:guth_katz}, it suffices to show that $E'(\pts, \qts) = O(m^{3/2}n^{3/2} \log n)$. 
    
    \parag{Reduction to Lines in $\RR^3$.} Any rotation $g \in G^{rot}$ fixes some point $(o_x, o_y)$ and rotates around this point with some counterclockwise angle $0 < \alpha < 2\pi$. We define the map $\rho: G^{rot} \to \RR^3$ such that 
    \begin{equation}\label{eqn:parametrisation}
        \rho(g) = (o_x, o_y, \cot(\alpha/2)),
    \end{equation}
    a simplication introduced by Guth and Katz. Under this parametrisation, one can check that the set of all rotations taking a point $p \in \RR^2$ to $q \in \RR^2$ form the line \begin{equation}\label{eqn:rotation_line}
        \ell_{p,q} = \left\{ \left( \frac{p_x + q_x}{2}, \frac{p_y +q_y}{2}, 0\right) + t \left(\frac{q_y - p_y}{2}, \frac{p_x - q_x}{2}, 1\right): t \in \RR\right\}. 
    \end{equation}
    Note that there exists a rotation taking $p_1$ to $q_2$ and $q_1$ to $p_2$ if and only if the lines $\ell_{p_1, q_2}$ and $\ell_{q_1, p_2}$ intersect. Indeed, the point of intersection is the parameterisation of the rotation $\varphi(p_1, q_1, p_2, q_2)$. Therefore, a quadruple is in $E'(\pts, \qts)$ if and only if $\ell_{p_1, q_2}$ and $\ell_{q_1, p_2}$ intersect.
    
    Let
   \begin{equation}\label{eqn:ESGK_lines_defn}
       \lines^1 = \{\ell_{p, q}\}_{p \in \pts, q \in \qts}, \quad \lines^2 = \{\ell_{q, p}\}_{p \in \pts, q \in \qts} \quad \mbox{and }\lines = \lines^1 \cup \lines^2.
   \end{equation}
    Let $I(\lines^1, \lines^2)$ denote the number of pairs of intersecting lines in $\lines^1 \times \lines^2$. The above bijection gives us that 
    \begin{equation}\label{eqn:quad_to_incidence}
        |E'(\pts, \qts)| = I(\lines^1, \lines^2).
    \end{equation}
     We have now reduced the problem of bounding the cardinality of the bipartite distance energy to the problem of bounding the number of pairs of intersecting lines. More specifically, to showing that $I(\lines^1, \lines^2) = O(m^{3/2}n^{3/2}\log n)$. Note that $|\lines^i| = mn$ and $|\lines| = 2mn$. 
    
    \begin{remark} 
    It may seem tempting to associate the lines in $\lines^1$ to pairs of points in $\pts \times \pts$ and those in $\lines^2$ to pairs of points in $\qts \times \qts$. However, this approach leads to a much more difficult problem. In particular, having $2mn$ lines rather than $m^2 + n^2$ lines seems to be crucial in our proof.
    \end{remark}
    
    \parag{Rich points.} Consider a point $p$ in $\RR^3$ that is incident to $r$ lines of $\lines$. If $x$ of those lines are associated with $\lines^1$, then the number of pairs in $\lines^1 \times \lines^2$ that intersect at $p$ is $x(r-x) \leq r^2/2$. We call a point $r$\textit{-rich} if it is incident to at least $r$ lines in $\lines$, and let $m_r(\lines)$ denote the number of $r$-rich points. Then, we have 
    \begin{equation}\label{eqn:ESGK_simplified_bound}
        I(\lines^1, \lines^2) \lesssim \sum_{r = 2}^{2mn} r^2 (m_{r}(\lines) - m_{r+1}(\lines)).
    \end{equation}
    Therefore, it suffices to bound $m_r(\lines)$. 
    
    In general, $2mn$ lines can have a lot more than $m^{3/2}n^{3/2}\log n$ pairs of intersecting lines. When many lines lie on a common plane or regulus, or if many lines intersect at a point, we can have up to $\sim m^2n^2$ pairs of intersecting lines. To overcome these issues, we rely on the following results due to Guth and Katz \cite{GuthKatz}. 
    
    \begin{theorem}[\cite{GuthKatz}]\label{thm:rich_2}
        Suppose $\lines$ is a set of $N$ lines. If no more than $\sqrt{N}$ lines of $\lines$ lie on any plane and $O(\sqrt{N})$ lines of $\lines$ lie on any common regulus, we have $m_2(\lines) = O(N^{3/2})$.
    \end{theorem}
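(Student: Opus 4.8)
\emph{Proof idea.} The plan is to run the polynomial-method argument of Guth and Katz \cite{GuthKatz}. Write $Q$ for the set of $2$-rich points, so $|Q| = m_2(\lines)$, and suppose toward a contradiction that $|Q| \ge C N^{3/2}$ for an absolute constant $C$ to be chosen large at the end. First I would apply the polynomial partitioning theorem with degree parameter $D \asymp N^{1/2}$: there is a nonzero polynomial $P$ of degree at most $D$ such that $\RR^3 \setminus Z(P)$ has $O(D^3)$ connected components (``cells''), each containing $O(|Q|/D^3)$ points of $Q$. Partition $Q = Q_{\mathrm{cell}} \sqcup Q_Z$ according to whether a point lies in an open cell or on $Z(P)$, and partition $\lines = \lines_{\overline Z} \sqcup \lines_Z$ according to whether a line is not, or is, contained in $Z(P)$.

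The cell part is the routine half. A line not contained in $Z(P)$ meets it in at most $D$ points, hence meets at most $D+1$ cells, so $\sum_\tau k_\tau \le (D+1)N$, where $k_\tau$ is the number of lines of $\lines_{\overline Z}$ meeting the cell $\tau$. Every point of $Q$ inside $\tau$ lies on at least two such lines, so the trivial ``two lines meet at most once'' bound gives $|Q \cap \tau| \le \binom{k_\tau}{2}$; combining this with $|Q\cap\tau| = O(|Q|/D^3)$ and splitting the cells into those with $k_\tau$ below versus above $(|Q|/D^3)^{1/2}$, a short computation yields $|Q_{\mathrm{cell}}| = O\!\big(N|Q|^{1/2}D^{-1/2}\big)$. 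With $D \asymp N^{1/2}$ this is $O(N^{3/4}|Q|^{1/2})$, which is at most $|Q|/2$ provided $|Q| \gtrsim N^{3/2}$ with a suitable constant. Hence at least half of $Q$ lies on $Z(P)$, i.e.\ $|Q_Z| \ge |Q|/2 \ge \tfrac{C}{2}N^{3/2}$.

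The crux is to contradict this by bounding $|Q_Z|$ directly, exploiting that $Z(P)$ is an algebraic surface of degree at most $D \asymp N^{1/2}$. A point of $Q_Z$ lying on at least one line of $\lines_{\overline Z}$ contributes an incidence to a line carrying at most $D$ points of $Z(P)$, so there are at most $|\lines_{\overline Z}| D \le ND = O(N^{3/2})$ of these. It remains to bound points lying on at least two lines contained in $Z(P)$. Decomposing $Z(P)$ into irreducible components: a planar component carries at most $\sqrt N$ lines of $\lines$ by hypothesis, hence at most $\binom{\sqrt N}{2} = O(N)$ such points; an irreducible quadric (a regulus) component carries $O(\sqrt N)$ lines by hypothesis, and being doubly ruled it contributes $O(N)$ such points; since there are at most $D = O(\sqrt N)$ components, these together give $O(N^{3/2})$. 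For a component of degree $e \ge 3$, if it is not ruled then the Cayley--Salmon bound limits the number of lines on it to $O(e^2)$, and summing $e^2$ over components of total degree $\le D$ gives $O(D^2) = O(N)$.

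The genuinely hard part, and the main obstacle, is a ruled component of degree $e \ge 3$: it may contain infinitely many lines, so counting is not enough. Here I would follow Guth and Katz and bring in the flecnode polynomial together with the classification of singly- and doubly-ruled surfaces, which shows that the lines of $\lines$ lying on such a component are, apart from a bounded exceptional subfamily, pairwise skew or meet in controllably few points, and that incidences between lines of distinct ruled components are likewise few, so the total stays $O(N^{3/2})$; this is exactly the delicate reguli/ruled-surface analysis where the most care is needed (cf.\ Section~\ref{subsec:reguli}). Granting it, $|Q_Z| = O(N^{3/2})$, which contradicts $|Q_Z| \ge \tfrac{C}{2}N^{3/2}$ once $C$ exceeds the implied constant; hence $m_2(\lines) = O(N^{3/2})$, as claimed.
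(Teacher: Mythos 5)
First, note that the paper does not prove Theorem \ref{thm:rich_2} at all: it is imported verbatim from \cite{GuthKatz} and used as a black box, the paper's own contribution being to verify its hypotheses via Lemmas \ref{lem:plane_bound_main} and \ref{lem:regulus_bound_main}. So there is no in-paper proof to compare against; what you have written is an outline of the Guth--Katz argument itself, in its modern polynomial-partitioning form (as in Guth's later expositions) rather than the original degree-reduction form. The cellular half of your sketch is correct and essentially complete: the bound $|Q_{\mathrm{cell}}| = O(N|Q|^{1/2}D^{-1/2})$, the conclusion that at least half of $Q$ lies on $Z(P)$ once $|Q| \gtrsim N^{3/2}$, and the $O(ND)$ count for points of $Q_Z$ lying on a line not contained in $Z(P)$ all check out.

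The surface half is where the theorem actually lives, and there your sketch has two gaps. The explicit one: you defer the singly-ruled components of degree at least $3$ to the flecnode/Cayley--Salmon analysis of \cite{GuthKatz} with the phrase ``granting it''; since that analysis is the entire content of the theorem beyond routine counting, what you have is an accurate map of the proof rather than a proof. The unacknowledged one: for non-ruled components you bound the number of \emph{lines} they carry ($O(e^2)$ per component, $O(D^2)=O(N)$ in total), but the quantity you need is the number of pairwise \emph{intersection points} of those lines, which is a priori quadratic in the number of lines, i.e.\ $O(N^2)$; this is precisely why Guth and Katz do not argue point-by-point on components but instead run a contradiction argument in which most lines are assumed to meet $\gtrsim N^{1/2}$ others and one bounds, line by line, how many others each can meet. (You also omit, though it is routine, the $O(ND)$ bound for $2$-rich points lying on two lines from \emph{different} components.) As written the argument therefore does not close; making it rigorous would amount to reproducing the relevant sections of \cite{GuthKatz}, which is presumably why the present paper cites the theorem rather than reproving it.
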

    
    \begin{theorem}[\cite{GuthKatz}]\label{thm:rich_gtr_3}
        Suppose $\lines$ is a set of $N$ lines. If no more than $\sqrt{N}$ lines of $\lines$ lie on any plane, we have $m_r(\lines) = O(N^{3/2}/r^2)$ for all $3 \leq r \leq N^{1/2}$.
    \end{theorem}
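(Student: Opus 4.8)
The plan is to reproduce the polynomial partitioning argument of Guth and Katz: this is one of their central incidence estimates, so rather than look for an alternative route I would lay out their method. The skeleton is an induction on $N$, in which one application of polynomial partitioning separates the $r$-rich points into those lying inside the open cells of the partition, handled by the inductive hypothesis, and those lying on the partitioning hypersurface, handled by the classical theory of ruled surfaces.

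In detail: using the polynomial ham--sandwich theorem, pick a polynomial $P$ with $\deg P \le D$ (with $D$ a parameter to be fixed while balancing the terms, roughly a small power of $N$ up to $\sqrt{N}$) whose zero set $Z(P)$ cuts $\RR^3$ into $O(D^3)$ open cells, each meeting $O(N/D^2)$ of the lines of $\lines$. Every $r$-rich point lies in some open cell or on $Z(P)$. If $p$ lies in an open cell, then no line through $p$ is contained in $Z(P)$, so all $\ge r$ lines through $p$ lie among that cell's $O(N/D^2)$ lines; applying the inductive hypothesis cell by cell and summing over the $O(D^3)$ cells bounds the number of such points by $O(N^{3/2}/r^2)$. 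Making this, together with the hypersurface terms below, actually close the induction takes care: one has to ensure that the cells inherit a usable ``few lines per plane'' bound (and dispatch separately the points lying in a plane that carries too many of a cell's lines), and the constants must be arranged --- through the choice of $D$ and, if needed, a lower-order additive term in the inductive statement --- so that the sum beats the target constant.

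The heart of the matter is the points on $Z(P)$. If $p \in Z(P)$ is $r$-rich, then either at least $r/2$ of its lines are transverse to $Z(P)$, or at least $r/2$ lie inside $Z(P)$. In the transverse case each such line meets $Z(P)$, hence the set of such points, in at most $\deg P$ points, so a double count bounds their number by $O(ND/r)$, which is within budget once $D$ is small enough relative to $\sqrt{N}$. In the in-surface case, factor $Z(P)$ into irreducible components: a planar component carries at most $\sqrt{N}$ lines by hypothesis, hence at most $O(N/r^2)$ of the sought points (a double count within the plane), and there are at most $\deg P$ such components; a non-planar component of degree $d$ carries $O(d^2)$ lines unless it is ruled --- this is where Salmon's flecnode polynomial, of degree $O(d)$, enters, its zero set pinning down the ruled case --- and a non-planar ruled surface of degree $d$ has only $O(d^2)$ points lying on three or more of its own lines, once the apex of a cone component is set aside and counted trivially. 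Summing these contributions over the $O(\deg P)$ components stays within budget. This is also precisely why --- in contrast to Theorem \ref{thm:rich_2} --- no hypothesis on reguli is needed here: a doubly ruled quadric has only two of its own lines through each of its points, so for $r \ge 3$ every rich point on such a surface must carry some transverse line and is caught above.

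I expect the main obstacle to be the in-surface analysis --- showing that an irreducible non-planar surface of degree $d$ carries $O(d^2)$ lines unless it is ruled, and that a ruled surface has only $O(d^2)$ multiply-covered points --- together with the bookkeeping of constants (and the treatment of overloaded planes inside cells) needed to make the induction close. The cell count and the transverse-line count are comparatively routine once the partition is in hand.
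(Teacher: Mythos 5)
The paper does not prove this statement at all: it is quoted verbatim from Guth and Katz and used as a black box (the paper's own contributions begin only after Theorems \ref{thm:rich_2} and \ref{thm:rich_gtr_3} are assumed). So there is nothing in the paper to compare against; what you have written is a reconstruction of the original Guth--Katz argument, and as an outline it is essentially faithful and correct. Your balancing is right: the cellular term $O(D^3)\cdot O\bigl((N/D^2)^{3/2}/r^2\bigr)$ reproduces the target bound for any $D$ (which is exactly why the constants need care), while the transverse count $O(ND/r)$ forces $D\lesssim N^{1/2}/r$, and your explanation of why no regulus hypothesis is needed for $r\geq 3$ --- a doubly ruled quadric has only two of its own lines through each point --- is precisely the right reason. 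The one place your sketch departs from the actual Guth--Katz route is the in-surface analysis: for the $r\geq 3$ case they do not invoke the flecnode polynomial (that is reserved for their $r=2$ theorem, i.e.\ Theorem \ref{thm:rich_2} here); instead they observe that a point of $Z(P)$ lying on three or more lines contained in $Z(P)$ must be a critical or linearly flat point, and they bound the critical/flat locus by auxiliary polynomials of degree $O(D)$, counting ``critical lines'' and ``flat lines'' via B\'ezout. Your ruled-surface route can be made to work but carries the extra technical burden you yourself flag (exceptional points and exceptional lines of singly ruled components, cone apices), whereas the critical/flat-point argument sidesteps the classification of ruled surfaces entirely. Either way, as a blind reconstruction of a cited theorem the proposal identifies all the genuinely hard points and contains no wrong step.
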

    
    In Section \ref{sec:planes_reguli}, we will prove the following two lemmas.

    \begin{lemma}\label{lem:plane_bound_main}
    Consider $\lines$ as defined in \eqref{eqn:ESGK_lines_defn}. The following hold.
    \begin{enumerate}[label=(\roman*), itemsep = \enumsep]
        \item Every point of $\RR^3$ is incident to at most $2m$ lines of $\lines$. 
        \item Every plane in $\RR^3$ contains at most $2m$ lines of $\lines$.
    \end{enumerate}
    \end{lemma}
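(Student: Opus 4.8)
\emph{Overview.} Both statements follow directly from the explicit description of the lines in \eqref{eqn:rotation_line} together with the parametrisation \eqref{eqn:parametrisation}. The plan is: for (i), use the identification of $\RR^3$ with the rotation group and the fact that a rotation is a bijection of $\RR^2$; for (ii), read off two linear constraints on a pair $(p,q)$ that force one endpoint to determine the other.

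\emph{Part (i).} The map $\rho$ of \eqref{eqn:parametrisation} is a bijection from $G^{rot}$ onto $\RR^3$: the first two coordinates record the centre of rotation, and since $\cot(\cdot/2)$ is a bijection from $(0,\pi)$ onto $\RR$, the third coordinate records the angle (the plane $\{z=0\}$ corresponding to rotations by $\pi$). For $v\in\RR^3$ write $g_v=\rho^{-1}(v)$. By the defining property of $\ell_{p,q}$ (its $\rho$-preimage is the set of rotations sending $p$ to $q$), the line $\ell_{p,q}$ passes through $v$ if and only if $g_v(p)=q$. Now fix $v$. Since $g_v$ is a function, for each $p\in\pts$ there is at most one $q\in\qts$ with $g_v(p)=q$, so at most $m$ lines of $\lines^1$ pass through $v$; since $g_v$ is injective, for each $p\in\pts$ there is at most one $q\in\qts$ with $g_v(q)=p$ (namely $q=g_v^{-1}(p)$), so at most $m$ lines of $\lines^2$ pass through $v$. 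Adding these gives the bound $2m$.

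\emph{Part (ii).} By \eqref{eqn:rotation_line} each $\ell_{p,q}$ has direction $(\tfrac{q_y-p_y}{2},\tfrac{p_x-q_x}{2},1)$, whose third coordinate is nonzero; hence no $\ell_{p,q}$ is contained in a horizontal plane $\{z=\mathrm{const}\}$, and such a plane contains $0\le 2m$ lines of $\lines$. So let $\Pi=\{n_1 x+n_2 y+n_3 z=c\}$ be a non-horizontal plane, so $(n_1,n_2)\neq(0,0)$. A line lies in $\Pi$ iff it meets $\Pi$ and its direction is orthogonal to $(n_1,n_2,n_3)$. Applying this to the base point $(\tfrac{p_x+q_x}{2},\tfrac{p_y+q_y}{2},0)$ and the direction above, and writing $f(u)=n_1 u_x+n_2 u_y$ and $h(u)=n_1 u_y-n_2 u_x$, the two conditions for $\ell_{p,q}\subseteq\Pi$ become
\[
f(p)+f(q)=2c\qquad\text{and}\qquad h(q)-h(p)=-2n_3 .
\]
Because $(n_1,n_2)\neq(0,0)$, the linear map $u\mapsto(f(u),h(u))$ has determinant $n_1^2+n_2^2>0$ and is a bijection of $\RR^2$, so these equations express $q$ as an affine function of $p$. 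Thus for each $p\in\pts$ there is at most one $q\in\qts$ with $\ell_{p,q}\subseteq\Pi$, whence $\Pi$ contains at most $m$ lines of $\lines^1$. Running the same computation for $\ell_{q,p}$ (same base point, direction $(\tfrac{p_y-q_y}{2},\tfrac{q_x-p_x}{2},1)$) again expresses $q$ as an affine function of $p$, so $\Pi$ contains at most $m$ lines of $\lines^2$, hence at most $2m$ lines of $\lines$ in total.

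\emph{Where care is needed.} Neither part is genuinely difficult once the lines are written down explicitly. The points to watch are, in (i), that $\rho$ really is onto all of $\RR^3$ including the plane $\{z=0\}$ (rotations by angle $\pi$), so that the argument covers every point; and in (ii), besides the separate (but trivial) treatment of horizontal planes, the bookkeeping that yields the bound $2m$ rather than one depending on $n$: precisely because the two linear constraints pin down one endpoint of a pair from the other, the lines of $\lines^2$ lying in a plane can be indexed by $\pts$ rather than by $\qts$. This is the bipartite counterpart of the Guth--Katz bound on lines lying in a plane, with the bipartite structure making the argument essentially immediate.
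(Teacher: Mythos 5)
Your proof is correct. The paper disposes of both parts in two lines by invoking Proposition \ref{prop:line_properties}(iv): for fixed $p\in\pts$ and fixed $i$, any two lines of $\lines_p^i$ are pairwise skew, hence no two of them can meet at a point or lie in a common plane, and there are only $2m$ families $\lines_p^i$. Your part (i) is exactly the argument underlying the non-intersection half of that proposition (a rotation is a bijection of $\RR^2$, so for a fixed point $v$ it sends each $p$ to at most one $q$ and pulls back each $p$ from at most one $q$), so there you coincide with the paper. For part (ii) you take a more computational route: instead of deducing non-coplanarity from skewness (which also requires checking that distinct lines of $\lines_p^i$ are non-parallel, as the paper does via \eqref{eqn:rotation_line}), you write out the two linear conditions for $\ell_{p,q}\subseteq\Pi$ and observe that for a non-horizontal plane they determine $q$ as an affine function of $p$ via the invertible map $u\mapsto(n_1u_x+n_2u_y,\ n_1u_y-n_2u_x)$ of determinant $n_1^2+n_2^2>0$. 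This is self-contained and arguably more transparent --- it exhibits the affine correspondence $p\mapsto q$ explicitly and sidesteps the parallel-versus-skew case analysis --- at the cost of redoing by hand what the skewness formulation yields for free once Proposition \ref{prop:line_properties}(iv) is in place. The organising idea, that the $2mn$ lines split into $2m$ families each contributing at most one line to any given point or plane, is identical in both arguments, and your bookkeeping (indexing the lines of $\lines^2$ in a plane by $\pts$ rather than $\qts$) is exactly what makes the bound $2m$ rather than $2n$.
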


    \begin{lemma}\label{lem:regulus_bound_main}
    For $\lines$ as defined in \eqref{eqn:ESGK_lines_defn}, at least one of the following holds.
    \begin{enumerate}[label=(\roman*), itemsep = \enumsep]
        \item $D(\pts, \qts) = \Omega(\sqrt{mn})$.
        \item Every regulus in $\RR^3$ contains $O(\sqrt{mn})$ lines of $\lines$.
    \end{enumerate}
    \end{lemma}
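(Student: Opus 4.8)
The plan is to prove the dichotomy by assuming that conclusion~(ii) fails and deriving conclusion~(i). Suppose, then, that some regulus $R\subseteq\RR^3$ contains at least $C\sqrt{mn}$ lines of $\lines$, where $C$ is a large absolute constant to be fixed at the end. Since $\lines=\lines^1\cup\lines^2$, and since exchanging the roles of $\pts$ and $\qts$ interchanges $\lines^1$ and $\lines^2$ while fixing both the set $\lines$ and the quantity $D(\pts,\qts)$, we may assume without loss of generality that at least $\tfrac{C}{2}\sqrt{mn}$ of the lines on $R$ lie in $\lines^1$. A regulus has exactly two ruling families, and two lines lying in the same ruling family are skew; hence, by pigeonhole, one ruling family $F$ of $R$ contains a set $L_0=\{\ell_{p_i,q_i}\}_{i\in J}$ of at least $\tfrac{C}{4}\sqrt{mn}$ pairwise-skew, distinct lines of $\lines^1$, with each $(p_i,q_i)\in\pts\times\qts$.

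The key ingredient is the explicit characterisation of reguli carrying three or more lines of the form $\ell_{p,q}$, to be established in Section~\ref{subsec:reguli} (with the corrected non-degeneracy condition of Proposition~\ref{prop:zariski_open} patching the gap recorded in Remark~\ref{rem:guth_katz_mistake}). Applied to $L_0$, whose lines all lie in the single ruling family $F$, this produces affine maps $P,Q\colon\RR\to\RR^2$ and distinct reals $\{t_i\}_{i\in J}$ with $p_i=P(t_i)$ and $q_i=Q(t_i)$ for every $i\in J$. Since the lines of $L_0$ are pairwise skew and distinct, $P$ and $Q$ cannot both be constant (otherwise all $\ell_{p_i,q_i}$ would coincide), and in fact $|P'|\neq|Q'|$; in particular whichever of $P$ and $Q$ is non-constant is an injective parametrisation of a line in $\RR^2$. (If in a degenerate case the characterisation returns a circle in place of a line, the argument below is unchanged, since two distinct circles, like two distinct lines, meet in at most two points and the crossing-number bound invoked below applies just as well to points on a circle.)

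It remains to deduce conclusion~(i), which we do by a short case analysis. If $P$ is constant, all the $p_i$ equal a single point $p\in\pts$, and then the $q_i=Q(t_i)$ are $|J|$ distinct points of $\qts$ on a common line; since any circle centred at $p$ meets that line in at most two points, $p$ determines at least $|J|/2\ge\tfrac{C}{8}\sqrt{mn}$ distinct distances with $\qts$, so $D(\pts,\qts)=\Omega(\sqrt{mn})$. The case where $Q$ is constant is symmetric. Otherwise both $P$ and $Q$ are non-constant, so the $p_i$ are $|J|\ge\tfrac{C}{4}\sqrt{mn}$ distinct points of $\pts$ lying on a single line $L_p$. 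Setting $\pts'=\pts\cap L_p$ and $m'=|\pts'|\ge\tfrac{C}{4}\sqrt{mn}$, the crossing-number argument of Remark~\ref{rem:szekely_set_line}, applied to the point sets $\pts'$ and $\qts$, yields $D(\pts,\qts)\ge D(\pts',\qts)=\Omega(\sqrt{m'n})=\Omega\big((mn)^{1/4}n^{1/2}\big)$, which is $\Omega(\sqrt{mn})$ since $n\ge m$. Choosing $C$ large enough that the pigeonhole steps go through, and absorbing small values of $mn$ into the implied constants, finishes the argument.

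The one genuinely difficult step is the structural input: the explicit description of reguli that contain many of the lines $\ell_{p,q}$, carried out in Section~\ref{subsec:reguli}. This is where care is needed, because the corresponding argument of Guth and Katz has the small gap pointed out in Remark~\ref{rem:guth_katz_mistake}; closing it requires checking (Proposition~\ref{prop:zariski_open}) that the relevant non-degeneracy hypothesis holds on a Zariski-open, hence generic, family of rigid motions, so that no regulus carrying many of our lines is overlooked. Once that characterisation is in hand, the pigeonhole-and-collinearity deduction above is routine.
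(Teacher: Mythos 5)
Your overall architecture matches the paper's: pass to a single ruling by pigeonhole, invoke a structural characterisation of rulings rich in lines $\ell_{p,q}$, and convert that structure into many points of $\qts$ on a circle or line, hence many distances. The gap is in the structural input itself. You invoke "the explicit characterisation of reguli carrying three or more lines of the form $\ell_{p,q}$" and assert it yields \emph{affine} maps $P,Q\colon\RR\to\RR^2$ with $p_i=P(t_i)$, $q_i=Q(t_i)$, with the non-constant one parametrising a line. No such statement is available, and it is false for general rulings: every non-horizontal line in $\RR^3$ is $\ell_{p,q}$ for some $(p,q)\in\RR^2\times\RR^2$ (Proposition \ref{prop:line_properties}(ii)), and under this identification a ruling of a regulus corresponds to a conic in $\RR^4$, whose projections to the two $\RR^2$-factors are in general conics, not lines, and are not affinely parametrised. (Concretely, for the ruling of $\VV(x^2+y^2-z^2-1)$ one gets $p=(2\cos\theta,2\sin\theta)$ and $q=(0,0)$: here $P$ traces a circle, which your "affine, hence a line" dichotomy cannot produce, and your parenthetical hedge about circles does not repair the claim that $|P'|\neq|Q'|$ or that a non-constant $P$ is injective onto a line.) The characterisation actually proved in Section \ref{subsec:reguli} (Propositions \ref{prop:regulus_by_circle} and \ref{prop:regulus_by_line}) has a different hypothesis and a different conclusion: it applies only when at least three lines of a ruling come from a \emph{common} family $\curves_p^1$ (same first point $p$), and it then says the entire ruling is $\{\ell_{p,x}\}_{x\in\gamma}$ with $\gamma$ a circle or a line --- i.e.\ $P$ is constant and $Q$ ranges over a circle or line. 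Your proof never secures the "common $p$" hypothesis, so the tool you need is not applicable as you have set things up.

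The fix is cheap but it is a genuinely missing step: after your pigeonhole onto one ruling with at least $\tfrac{C}{4}\sqrt{mn}$ lines of $\lines^1$, pigeonhole again over the $m$ families $\lines_p^1$ (using $\sqrt{mn}/m=\sqrt{n/m}\geq 1$) to find a single $p$ contributing at least three of them; equivalently, handle separately the case where every family contributes $O(1)$ lines to the regulus, which already gives the bound $O(m)=O(\sqrt{mn})$ since $m\leq n$. This is exactly the dichotomy the paper's proof opens with. Once the correct characterisation is in play, only your "$P$ constant" branch occurs, and your handling of it is essentially right (it is Lemma \ref{lem:curve_bipartite_bound}; note only that when $\gamma$ is a circle centred at $p$ itself you must measure distances from a different point of $\pts$). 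Your "both non-constant" branch, besides resting on the unsupported collinearity of the $p_i$, is in any case vacuous for large $C$, since $|J|$ distinct points of $\pts$ forces $|J|\leq m\leq\sqrt{mn}<\tfrac{C}{4}\sqrt{mn}$; the detour through Remark \ref{rem:szekely_set_line} is not needed. Finally, Proposition \ref{prop:zariski_open} and Remark \ref{rem:guth_katz_mistake} are not where the difficulty lies for this lemma; they only guarantee that a line in one ruling meets all but $O(1)$ lines of the other, which is used inside the proofs of Propositions \ref{prop:regulus_by_circle} and \ref{prop:regulus_by_line} to show the rulings are \emph{exactly} the stated families.
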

    
    We are now ready to prove Theorem \ref{main_thm:guth_katz}. 
    
    \begingroup
    \def\thetheorem{\ref{main_thm:guth_katz}}
    \begin{theorem}
        For $n^{1/3} \leq m \leq n$, we have that $D(m, n) = \Omega(\sqrt{mn}/\log n).$
    \end{theorem}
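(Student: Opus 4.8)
The plan is to run the reduction of this section to completion: bound the bipartite distance energy by $|E(\pts,\qts)| = O(m^{3/2}n^{3/2}\log n)$ and then invoke Proposition \ref{prop:cauchy_schwarz}. By Proposition \ref{prop:bound_translations} we have $|E^{trans}(\pts,\qts)| = O(m^2 n) = O(m^{3/2}n^{3/2})$, using $m \le n$, so it remains to bound $|E'(\pts,\qts)| = I(\lines^1,\lines^2)$ via \eqref{eqn:quad_to_incidence}. If alternative (i) of Lemma \ref{lem:regulus_bound_main} holds we already have $D(\pts,\qts) = \Omega(\sqrt{mn}) = \Omega(\sqrt{mn}/\log n)$ and are done, so from now on assume alternative (ii): every regulus contains $O(\sqrt{mn})$ lines of $\lines$.

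Next I would arrange the hypotheses of the Guth--Katz incidence bounds. Set $N = |\lines| = 2mn$. Lemma \ref{lem:plane_bound_main}(ii) says at most $2m$ lines of $\lines$ lie on any plane, and $2m \le \sqrt{2mn} = \sqrt N$ provided $2m \le n$; when $n/2 < m \le n$ one reduces to this case by passing to a subset $\pts' \subseteq \pts$ of size $\lfloor n/2\rfloor$, using monotonicity of $D(\cdot,\cdot)$ together with $\sqrt{|\pts'|\,n} = \Theta(n) = \Theta(\sqrt{mn})$. (Alternatively, $2m \le \sqrt{2}\,\sqrt N$ for every $m\le n$, and the Guth--Katz bounds tolerate this constant.) Thus we may assume at most $\sqrt N$ lines lie on any plane and $O(\sqrt N)$ on any regulus. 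Then Theorem \ref{thm:rich_2} gives $m_2(\lines) = O(N^{3/2})$ and Theorem \ref{thm:rich_gtr_3} gives $m_r(\lines) = O(N^{3/2}/r^2)$ for $3 \le r \le \sqrt N$; moreover Lemma \ref{lem:plane_bound_main}(i) forces $m_r(\lines) = 0$ for $r > 2m$, and $2m \le \sqrt N$, so these estimates cover every $r$ with $m_r(\lines)\neq 0$.

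It then remains to evaluate \eqref{eqn:ESGK_simplified_bound}. The terms with $r > 2m$ vanish by Lemma \ref{lem:plane_bound_main}(i), and summation by parts (using $m_{2m+1}(\lines)=0$) turns $\sum_{r=2}^{2m} r^2(m_r(\lines)-m_{r+1}(\lines))$ into $4m_2(\lines)+\sum_{r=3}^{2m}(2r-1)m_r(\lines)$. Plugging in the bounds above,
\[
I(\lines^1,\lines^2) \lesssim N^{3/2} + N^{3/2}\sum_{r=3}^{2m}\frac{1}{r} = O\!\left(N^{3/2}\log n\right) = O\!\left(m^{3/2}n^{3/2}\log n\right).
\]
Combined with the translation bound this gives $|E(\pts,\qts)| = O(m^{3/2}n^{3/2}\log n)$, and Proposition \ref{prop:cauchy_schwarz} then yields $D(\pts,\qts) \ge m^2n^2/|E(\pts,\qts)| = \Omega(\sqrt{mn}/\log n)$, completing the proof.

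Everything above is bookkeeping once Lemmas \ref{lem:plane_bound_main} and \ref{lem:regulus_bound_main} are available, so the real work (deferred to Section \ref{sec:planes_reguli}) is those two lemmas — in particular the regulus dichotomy of Lemma \ref{lem:regulus_bound_main}, namely that a regulus carrying more than $\sim\sqrt{mn}$ of the lines $\ell_{p,q}$ forces some point of $\pts$ to realise $\Omega(\sqrt{mn})$ distinct distances to $\qts$. The only genuine friction inside the present argument is matching the plane bound $2m$ of Lemma \ref{lem:plane_bound_main} to the $\sqrt N$ threshold in Theorems \ref{thm:rich_2}--\ref{thm:rich_gtr_3}, which is why the range $m > n/2$ is peeled off at the start.
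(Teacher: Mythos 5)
Your proposal is correct and follows essentially the same route as the paper: reduce to bounding $I(\lines^1,\lines^2)$, invoke Lemmas \ref{lem:plane_bound_main} and \ref{lem:regulus_bound_main} to satisfy the hypotheses of Theorems \ref{thm:rich_2} and \ref{thm:rich_gtr_3}, sum by parts over $r$-rich points up to $r = 2m$, and finish with Propositions \ref{prop:bound_translations} and \ref{prop:cauchy_schwarz}. Your extra care in matching the plane bound $2m$ to the $\sqrt{N}$ threshold (peeling off $m > n/2$ or absorbing the constant $\sqrt{2}$) addresses a point the paper passes over silently, but it does not change the argument.
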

    \addtocounter{theorem}{-1}
    \endgroup

    \begin{proof}
        Suppose that $D(\pts, \qts) = O(\sqrt{mn})$ (otherwise, we are done). By Lemma \ref{lem:plane_bound_main} and Lemma \ref{lem:regulus_bound_main}, at most $2m$ lines in $\lines$ lie in a common plane and $O(\sqrt{mn})$ lines in $\lines$ lie in a common regulus. Lemma \ref{lem:plane_bound_main} also gives us that $\pts_r(\lines) = 0$ for $r > 2m$ since no point is incident to more than $2m$ lines. Combining \eqref{eqn:ESGK_simplified_bound} with Theorems \ref{thm:rich_2} and \ref{thm:rich_gtr_3}, we have:
        \begin{align*}
            I(\lines^1, \lines^2) &\lesssim \sum_{r = 2}^{2mn} r^2(m_r(\lines) - m_{r+1}(\lines))\\
            &\lesssim m_2(\lines) + \sum_{r = 3}^{2m} (r^2 - (r-1)^2) m_r(\lines)\\
            &\lesssim O(m^{3/2}n^{3/2}) + \sum_{r = 3}^{2m} (2r - 1) \cdot O\left(\frac{m^{3/2}n^{3/2}}{r^2}\right) \lesssim m^{3/2}n^{3/2} \log n.
        \end{align*} 
        Combining this with with Proposition \ref{prop:bound_translations} and \eqref{eqn:quad_to_incidence}, $$|E(\pts, \qts)| = |E^{trans}(\pts, \qts)| + |E'(\pts, \qts)| = O(m^2n) + O(m^{3/2}n^{3/2} \log n) = O(m^{3/2}n^{3/2} \log n).$$
        Finally, by Proposition \ref{prop:cauchy_schwarz}, we have $$D(\pts, \qts) \geq \frac{m^2n^2}{|E(\pts, \qts)|} = \Omega\left(\frac{\sqrt{mn}}{\log n}\right),$$ concluding the proof.
    \end{proof}

    \parag{Main modifications.} Here are the main modifications that we made to adapt the reduction for the bipartite problem. 
    \begin{enumerate}[label = (\roman*), itemsep = \enumsep]
        \item We consider the bipartite distance energy rather than the standard distance energy.
        \item We consider the set of rigid motions taking a pair of points in $\pts \times \qts$ to a pair of points in $\qts \times \pts$.
        \item We present a somewhat different analysis in Section \ref{sec:planes_reguli} to bound the number of lines in any or regulus.
    \end{enumerate}
\section{Lines, Planes, and Reguli} \label{sec:planes_reguli}
In this section, we prove Lemmas \ref{lem:plane_bound_main} and \ref{lem:regulus_bound_main}. This is the last remaining piece in our proof of Theorem \ref{main_thm:guth_katz}. 

Throughout this section, we identify each point in $\RR^3$ with the rotation described by the map $\rho$ in \eqref{eqn:parametrisation}. In Section \ref{subsec:lines}, we state some properties of lines in $\RR^3$. Then, in Section \ref{subsec:planes}, we bound the number of lines in $\lines$ that lie on any given plane in $\RR^3$. We then introduce some definitions and tools from algebraic geometry in Section \ref{subsec:alg_geom}. Finally, in Section \ref{subsec:reguli}, we define reguli, state properties of reguli and bound the number of lines in $\lines$ that lie on any given regulus. 

For the rest of this section, for each $p \in \pts$, define $\lines_p^1 = \{\ell_{p,q}|q \in \qts\}$, $\lines_p^2 = \{\ell_{q,p}\ |\ q \in \qts\}$, $\curves_p^1 = \{\ell_{p, x}| x \in \RR^2\}$, and $\curves_p^2 = \{\ell_{x, p}| x \in \RR^2\}$.

\subsection{Lines}\label{subsec:lines}
In this section, we present some properties of lines in $\RR^3$. Following the notation from \cite{GuthBook}, we call a line in $\RR^3$ \textit{horizontal} if it has a constant $z$-coordinate.

\begin{prop}\label{prop:line_properties}
We have the following properties of lines in $\RR^3$.
    \begin{enumerate}[label=(\roman*), itemsep = \enumsep]
        \item For all $p, q \in \RR^2$, the line $\ell_{p, q}$ is not horizontal, and hence it intersects the $xy$-plane.
        \item Every non-horizontal line in $\RR^3$ is of the form $\ell_{p, q}$ for some unique $p, q \in \RR^2$.
        \item For each $p \in \RR^2$ and $P \in \RR^3$, there exist $q, r \in \RR^2$ such that $P \in \ell_{p, q}$ and $P \in \ell_{r, p}$.
        \item For all $p, q, r \in \RR^2$ where $q \neq r$, the lines $\ell_{p, q}$ and $\ell_{p, r}$ are skew. The same holds for $\ell_{q, p}$ and $\ell_{r, p}$.
        \item For all $p, q \in \RR^2$, the line $\ell_{p, q}$ is a reflection of $\ell_{q, p}$ across the $xy$-plane.
    \end{enumerate}
\end{prop}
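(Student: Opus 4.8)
The plan is to work directly from the explicit parametrisation \eqref{eqn:rotation_line} of $\ell_{p,q}$, so that each of the five items becomes a short computation. For (i), the direction vector of $\ell_{p,q}$ is $\left(\frac{q_y-p_y}{2}, \frac{p_x-q_x}{2}, 1\right)$, whose $z$-component is $1\neq 0$; hence the line is never horizontal, and since its $z$-coordinate ranges over all of $\RR$ it must cross $z=0$. For (ii), I would take an arbitrary non-horizontal line, write it as $(a,b,0)+t(u,v,1)$ after rescaling the direction so the $z$-component is $1$ and choosing the base point on the $xy$-plane (possible exactly because the line is non-horizontal, by (i)'s reasoning). Then I match this against \eqref{eqn:rotation_line}: I need $p,q$ with $\frac{p+q}{2}=(a,b)$ and $\left(\frac{q_y-p_y}{2},\frac{p_x-q_x}{2}\right)=(u,v)$. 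This is a linear system in $(p_x,p_y,q_x,q_y)$; solving it gives $p = (a-v,\,b+u)$ and $q=(a+v,\,b-u)$ (up to checking signs against the stated formula), and uniqueness is immediate since the map $(p,q)\mapsto(\text{base point},\text{direction})$ is invertible.

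For (iii), fix $p\in\RR^2$ and $P=(P_x,P_y,P_z)\in\RR^3$. The curve $\curves_p^1 = \{\ell_{p,x} : x\in\RR^2\}$ consists of all rotations sending $p$ somewhere; I claim every point of $\RR^3$ lies on exactly one such line. Indeed, the rotation parametrised by $P$ sends $p$ to some well-defined point $q(P)\in\RR^2$ (the image of $p$ under the rotation about $(P_x,P_y)$ by the angle $\alpha$ with $\cot(\alpha/2)=P_z$), and then $P\in\ell_{p,q(P)}$ by the defining property of $\ell_{p,q}$ stated just before \eqref{eqn:rotation_line}. The same argument with preimages gives $r$ with $P\in\ell_{r,p}$. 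Alternatively, one can verify this purely algebraically by plugging $P$ into \eqref{eqn:rotation_line} and solving for $q$ (two linear equations in $q_x,q_y$ from the first two coordinates, with the parameter $t=P_z$ forced by the third). For (iv), suppose $\ell_{p,q}$ and $\ell_{p,r}$ meet at a point $P$; then $P$ encodes a rigid motion fixing $p\mapsto q$ and simultaneously $p\mapsto r$, which forces $q=r$ — contradiction — and since distinct non-parallel lines in $\RR^3$ that do not meet are skew, while here the direction vectors $\left(\frac{q_y-p_y}{2},\frac{p_x-q_x}{2},1\right)$ and $\left(\frac{r_y-p_y}{2},\frac{p_x-r_x}{2},1\right)$ are unequal when $q\neq r$ (compare third-to-first coordinate ratios, or just note the two-dimensional data $q$ vs $r$) yet could a priori be parallel only if $q=r$, I get skewness. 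The argument for $\ell_{q,p},\ell_{r,p}$ is symmetric. Finally (v) is a one-line check: reflecting \eqref{eqn:rotation_line} across the $xy$-plane negates the $z$-coordinate, i.e. sends $t\mapsto -t$ and flips the sign of the third direction component, turning the base point $\frac{p+q}{2}$ (unchanged) and direction $\left(\frac{q_y-p_y}{2},\frac{p_x-q_x}{2},1\right)$ into $\left(\frac{q_y-p_y}{2},\frac{p_x-q_x}{2},-1\right)$; rescaling by $-1$ this is $\left(\frac{p_y-q_y}{2},\frac{q_x-p_x}{2},1\right)$, which is exactly the direction of $\ell_{q,p}$, and the base points agree, so the two lines coincide.

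The main obstacle, such as it is, is item (iii): one must be a little careful that the "rotation sending $p$ to $q$" is genuinely well-defined and that the correspondence between points of $\RR^3$ and rotations (via $\rho$) is a bijection onto $G^{rot}$ with the degenerate angles excluded — and one should note the translations, i.e. the $z=\infty$ case, are harmless here since we only claim existence of *some* $q$, achieved at finite $P_z$. Everything else is bookkeeping with the explicit formula \eqref{eqn:rotation_line}. I would present (i), (ii), (v) as direct computations and (iii), (iv) via the rigid-motion interpretation, cross-checking (iii) algebraically for safety.
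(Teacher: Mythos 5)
Your proposal is correct and follows essentially the same route as the paper: items (i), (ii) by direct computation with the parametrisation \eqref{eqn:rotation_line}, and items (iii), (iv) via the rigid-motion interpretation (the paper even uses the identical $q = g(p)$, $r = g^{-1}(p)$ device for (iii)). The only divergence is (v), which the paper proves by observing that $g$ and $g^{-1}$ share a fixed point and have negated $\cot(\alpha/2)$ values, whereas you verify the reflection directly on the parametrised line; both are one-line checks, and your swapped signs for $p,q$ in (ii) (the correct solution is $p=(a+v,\,b-u)$, $q=(a-v,\,b+u)$) are immaterial since you flagged them.
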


\begin{proof}
    These observations follow immediately from \eqref{eqn:rotation_line} and the fact that every point represents a rotation of $\RR^2$.
    \begin{enumerate}[label=(\roman*), itemsep = \enumsep]
        \item From \eqref{eqn:rotation_line}, we see that the direction of $\ell_{p, q}$ has a non-zero $z$-coordinate, implying that it is not horizontal.
        \item Any non-horizontal line $\ell$ intersects the $xy$-plane at some point $(a, b, 0)$, and has direction $(d, e, 1)$. In other words, it can be written uniquely in the form $$\ell = \left\{(a, b, 0) + t(d, e, 1)\ |\ t \in \RR^2\right\}.$$
        Equating this to \eqref{eqn:rotation_line}, we obtain a system of four linearly independent linear equations in $p_x, p_y, q_x, q_y$. This system always has a unique solution. 
        \item The point $P \in \RR^3$ represents some rotation $g$. Set $q = g(p)$ and $r = g^{-1}(p)$. Then, $P \in \ell_{p, q}$ and $P \in \ell_{q, r}$.
        \item Since a rigid motion is a bijection from $\RR^2$ to $\RR^2$, no $g \in G'$ can take $p$ to both $q$ and $r$. Therefore, $\ell_{p, q}$ and $\ell_{p, r}$ cannot intersect. From \eqref{eqn:rotation_line}, these lines cannot be parallel. Therefore, these two lines are skew. A symmetric argument shows that $\ell_{q, p}$ and $\ell_{r, p}$ are skew.
        \item Consider a rotation $g$. Note that $g$ and $g^{-1}$ have the same fixed point. This implies that the corresponding points in $\RR^3$ have the same $x$ and $y$ coordinates. Moreover, if $g$ is a rotation with angle $\theta$, then $g^{-1}$ is a rotation with angle $2\pi - \theta$. Note that the cotangent function is an odd function with period $\pi$. In other words, $\cot(\theta/2) = -\cot((2\pi - \theta)/2)$, so the $z$ coordinates of the corresponding points in $\RR^3$ are negations of each other. Therefore, the points corresponding to $g$ and $g^{-1}$ are reflections of each other across the $xy$-plane. Since $g \in \ell_{p, q} \iff g^{-1} \in \ell_{q, p}$, we conclude that $\ell_{p, q}$ is the reflection of $\ell_{q, p}$ across the $xy$-plane.
    \end{enumerate}
\end{proof}
We now study the geometric interpretation of horizontal lines in $\RR^3$ under $\rho$. 
We define an \textit{oriented line} to be a line with an associated directional vector with the same slope as the line. 
We say that two oriented lines are \textit{parallel} if their vectors have the same direction and \textit{anti-parallel} if they have opposite directions. Two oriented lines \textit{subtend} an angle $\theta$ if this is the counterclockwise angle subtended by two directional vectors corresponding to the line orientations. If the oriented lines are parallel, then $\theta = 0$. If the oriented lines are anti-parallel, then $\theta = \pi$.

In the rest of this section, we use $\ell$ to denote lines in $\RR^3$ and $\lambda$ to denote lines in $\RR^2$. For oriented non-parallel lines $\lambda_1, \lambda_2 \subset \RR^2$, let $S(\lambda_1, \lambda_2)$ be the set of points in $\RR^3$ corresponding to rotations of $\RR^2$ that map $\lambda_1$ onto $\lambda_2$ while preserving the orientation. This notation and the following proposition are based on ideas from \cite{GuthBook}.

\begin{prop}\label{prop:ESGK_horizontal} For oriented non-parallel lines $\lambda_1, \lambda_2 \subset \RR^2$, the set $S(\lambda_1, \lambda_2)$ is a horizontal line in $\RR^3$. Moreover, for any horizontal line $\ell \subset \RR^3$ there exist (non-unique) oriented non-parallel lines $\lambda_1, \lambda_2$ so that $\ell = S(\lambda_1, \lambda_2)$. 
\end{prop}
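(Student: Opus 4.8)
\medskip

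The plan is to exploit the identification of points of $\RR^3$ with rotations of $\RR^2$ via $\rho$, and to understand when such rotations carry one oriented line onto another. First I would fix oriented non-parallel lines $\lambda_1, \lambda_2 \subset \RR^2$ and observe that a proper rigid motion $g$ sends $\lambda_1$ onto $\lambda_2$ preserving orientation precisely when its rotational (linear) part is the unique rotation by the angle $\theta$ subtended from $\lambda_1$ to $\lambda_2$; the translational part is then constrained only by the condition that the image line coincide with $\lambda_2$, i.e.\ by one linear equation on the translation vector. Hence the set of such $g$ is a one-parameter family: all rotations with a \emph{fixed} rotation angle $\theta$ whose centers sweep out a line in $\RR^2$ (the locus of valid fixed points). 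Under $\rho$, a fixed rotation angle $\theta$ forces the $z$-coordinate $\cot(\theta/2)$ to be constant, so the image in $\RR^3$ lies in a horizontal plane; and since the centers $(o_x,o_y)$ trace a line in the $xy$-plane, the image is exactly a horizontal line. I would write this out concretely: parametrize $\lambda_1$ by a point $a_1$ and unit direction $u$, and $\lambda_2$ by $a_2$ and unit direction $R_\theta u$, and then solve for the center $o$ of the rotation $g = (\text{translate by }o)\circ R_\theta\circ(\text{translate by }{-}o)$ that maps $\lambda_1$ onto $\lambda_2$; the set of solutions $o$ is an affine line, which maps under $\rho$ to a horizontal line at height $\cot(\theta/2)$.

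\medskip

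For the converse, I would take an arbitrary horizontal line $\ell \subset \RR^3$, say at height $c$, and set $\theta$ by $\cot(\theta/2) = c$ (with $0 < \theta < 2\pi$), so every point of $\ell$ corresponds under $\rho$ to a rotation by the same angle $\theta$, with centers running along the projection $\ell'$ of $\ell$ to the $xy$-plane, which is a line in $\RR^2$. I then need to produce oriented non-parallel $\lambda_1, \lambda_2$ with $S(\lambda_1,\lambda_2) = \ell$. The natural choice: pick any direction $u$ not fixed by $R_\theta$ (automatic since $\theta \neq 0$, and we may also arrange $\theta \neq \pi$ is not needed — we just need $\lambda_1,\lambda_2$ non-parallel, i.e.\ $R_\theta u \not\parallel u$, which holds for $\theta \neq 0, \pi$; if $c = 0$, i.e.\ $\theta = \pi$, treat it separately or note the proposition's ``non-unique'' wording lets us still choose a representation — actually for $\theta = \pi$ a half-turn maps every line to a parallel line, so no such $\lambda_1,\lambda_2$ exist, meaning the $z=0$ horizontal lines must be handled by a different argument, see below). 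For $\theta \neq 0,\pi$, given the line of centers $\ell'$, I would reverse-engineer $\lambda_1$: choose $\lambda_1$ to be a line whose locus of ``valid rotation centers for angle $\theta$ mapping $\lambda_1$ to its image'' is exactly $\ell'$. Concretely, from the computation in the forward direction the center-locus of $(\lambda_1,\lambda_2)$ is determined by $\lambda_1$ and $\theta$ together with a choice of $\lambda_2$; I would show that as $\lambda_1$ ranges over all lines of a suitable direction and $\lambda_2$ over all lines of the rotated direction, the resulting center-loci realize \emph{every} line $\ell'$ in $\RR^2$, by a direct dimension/parameter count and an explicit inversion of the affine relation.

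\medskip

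The main obstacle I anticipate is the degenerate case $c = 0$, i.e.\ horizontal lines in the $xy$-plane itself, which correspond to half-turns ($\theta = \pi$): a half-turn maps every line to a line parallel to it, so it can never map $\lambda_1$ onto a non-parallel $\lambda_2$, and strictly speaking $S(\lambda_1,\lambda_2)$ for non-parallel $\lambda_1,\lambda_2$ never has height $0$. I expect the resolution is either (a) that the statement implicitly concerns horizontal lines at nonzero height — the lines $\ell_{p,q}$ meeting the $xy$-plane do so transversally, and the horizontal lines that actually arise in the reduction sit at heights $\cot(\alpha/2) \neq 0$ — or (b) that one should also allow $\lambda_1 = \lambda_2$ (parallel case) and reflections, but the proposition as phrased demands non-parallel lines. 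I would check the reference \cite{GuthBook} convention and, if needed, simply note in the proof that for $c \neq 0$ the construction above works verbatim, and that the $c = 0$ horizontal lines do not occur among the $S(\lambda_1,\lambda_2)$ with $\lambda_1 \not\parallel \lambda_2$ but also are not needed elsewhere. Apart from this, the remaining steps are routine affine-geometry computations: solving a $2\times 2$ linear system for the rotation center, and verifying the center-locus is an affine line depending affinely on the data.
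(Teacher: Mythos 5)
Your forward direction matches the paper's argument in substance (a rotation carrying $\lambda_1$ onto $\lambda_2$ with matching orientations must have the fixed angle $\theta$, hence constant $z=\cot(\theta/2)$; the admissible centers form an affine line because $o=(I-R_\theta)^{-1}v$ with $v$ ranging over a line), and your explicit linear-algebra computation is a perfectly good substitute for the paper's more synthetic identification of the correct axis of symmetry. The genuine gap is in the converse, in exactly the case you flagged: horizontal lines at height $0$, i.e.\ $\theta=\pi$. You conclude that ``no such $\lambda_1,\lambda_2$ exist'' and suggest the statement must implicitly exclude height $0$. That is wrong under the paper's conventions: the paper defines two \emph{oriented} lines to be \emph{parallel} only when their direction vectors point the same way, and \emph{anti-parallel} when they point opposite ways. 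Anti-parallel oriented lines are therefore ``non-parallel'' in the sense of the proposition, even though they are parallel as unoriented lines. The paper handles $w=0$ precisely this way: take $\lambda_1,\lambda_2$ to be two distinct lines parallel (as point sets) to the projection $\lambda$ of $\ell$, at equal distance from it, with opposite orientations; a half-turn about any point of $\lambda$ carries $\lambda_1$ onto $\lambda_2$ respecting orientation, and $S(\lambda_1,\lambda_2)$ is exactly the horizontal line $\ell$ at height $\cot(\pi/2)=0$. So the case you set aside is both covered by the statement and needed, and your proposed resolutions (restrict to nonzero height, or allow reflections) do not repair it.

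Two smaller remarks. First, your forward-direction argument should also be checked against the anti-parallel case: there $R_\pi=-I$, so $I-R_\pi=2I$ is invertible and the center locus is the midline between the two unoriented-parallel lines; your computation goes through, you just need to include this case rather than assume $\theta\neq\pi$. Second, for $\theta\neq 0,\pi$ your ``reverse-engineering'' of $\lambda_1,\lambda_2$ from $\ell'$ can be done more directly than by a dimension count, as in the paper: pick any point $p\in\ell'$ and take $\lambda_1,\lambda_2$ to be two oriented lines through $p$ subtending angle $\theta$, arranged so that $\ell'$ is the axis of symmetry bisecting the angle \emph{not} subtended by them (equivalently, parallel to $\vv_1-\vv_2$); this pins down which of the two axes of symmetry carries the rotation centers, a point your sketch leaves implicit.
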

    
\begin{figure}[ht]
        \centering
        \includegraphics{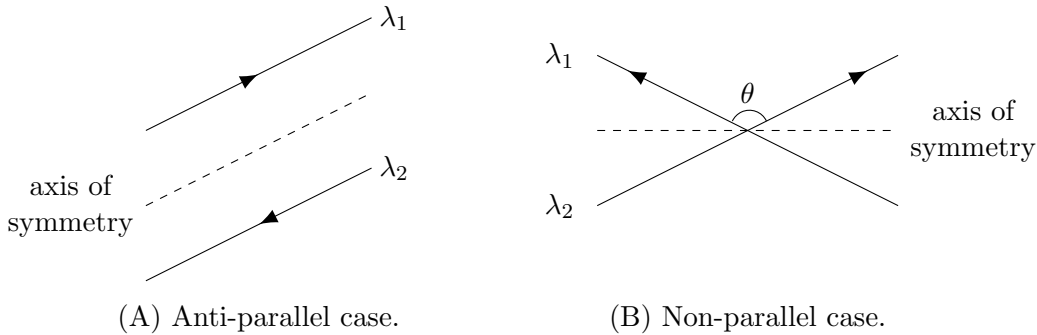}
        \caption{The axis of symmetry corresponding to all $O$ that are the centers of a rotation taking $\lambda_1$ to $\lambda_2$.}
        \label{fig:line_rigid_motion}
\end{figure}

\begin{proof}
    Let $\theta$ be the angle subtended by $\lambda_1$ and $\lambda_2$. Note that rigid motions maintain the equivalence classes of parallel lines. Suppose $g \in S(\lambda_1, \lambda_2)$ fixes some point $O = (o_x, o_y)$.  
    For $i \in \{0, 1\}$, denote by $\lambda_i'$ the line through $O$ that is parallel to $\lambda_i$. Then, $g$ maps $\lambda_1'$ to $\lambda_2'$ through a rotation around $O$. This implies that the angle of rotation is $\theta$. Therefore, all $g \in S(\lambda_1, \lambda_2)$ are rotations of angle $\theta$, so $S(\lambda_1, \lambda_2)$ is contained in the plane $z = \cot(\theta/2)$. 
    
    It remains to show that the set of fixed points of all $g \in S(\lambda_1, \lambda_2)$ is form a line in $\RR^2$. Since $g(\lambda_1) = \lambda_2$ and $g(\lambda_1') = \lambda_2'$, we get that $d(\lambda_1, \lambda_1') = d(\lambda_2, \lambda_2')$. Therefore, $O$ lies on an axis of symmetry of $\lambda_1$ and $\lambda_2$ (that is, a line consisting of the points that are equidistant from $\lambda_1$ and $\lambda_2$). If $\lambda_1$ and $\lambda_2$ are anti-parallel, there is only one axis of symmetry. Otherwise, there are two axes of symmetry. Fix a rotation $g \in S(\lambda_1, \lambda_2)$. From the above, we know that $g$ is a rotation of angle $\theta$ that fixes a point $O \in \RR^2$. For $i \in \{1, 2\}$, let $P_i$ denote the foot of the perpendiculars from $O$ to $\lambda_i$. Note that $P_i$ is the unique point on $\lambda_i$ closest to $O$, and $$d(O, \lambda_1) = d(O, P_1) = d(O, P_2) = d(O, \lambda_2).$$ Since $g(P_1) \in \lambda_2$, we get that $g(P_1) = P_2$. In particular, the angle subtended by $\overrightarrow{OP_1}$ and $\overrightarrow{OP_2}$ is equal to $\theta$. This happens only when $O$ lies on the axis of symmetry that bisects the angle not subtended by $\lambda_1$ and $\lambda_2$. That is, if $\lambda_1$ and $\lambda_2$ have directions $\vv_1$ and $\vv_2$ respectively, this is the axis of symmetry parallel to $\vv_1 - \vv_2$. These axes of symmetry are depicted in Figure \ref{fig:line_rigid_motion}. Moreover, for any $O$ on this chosen axis of symmetry, the rotation $g$ around $O$ such that $g(P_1) = P_2$ lies in $S(\lambda_1, \lambda_2)$. This completes the proof of the first part of the proposition. 
    
    Consider a horizontal line $\ell$ in $\RR^3$ lying on the plane $z = w$. Let $\lambda$ be the projection of $\ell$ on the $xy$-plane. Let $\theta = 2\cot^{-1} (w)$. If $\theta = \pi$, let $\lambda_1$ and $\lambda_2$ be two distinct parallel lines at distance 1 from $\lambda$ with opposite orientations. Otherwise, pick an arbitrary point $p$ on $\lambda$. Pick $\lambda_1$ and $\lambda_2$ to be two oriented lines through $p$ subtending an angle of $\theta$ such that $\lambda$ is the axis of symmetry that does not bisect the angle subtended by $\lambda_1$ and $\lambda_2$. Then, $S(\lambda_1, \lambda_2) = \ell$, as desired.
\end{proof}

\subsection{Planes}\label{subsec:planes}

Using Proposition 5.1, we can bound the number of lines in any plane and complete the proof of Lemma \ref{lem:plane_bound_main}. We restate the lemma for convenience.

\begingroup
    \def\thetheorem{\ref{lem:plane_bound_main}}
    \begin{lemma}
        Consider $\lines$ as defined in \eqref{eqn:ESGK_lines_defn}. The following hold.
        \begin{enumerate}[label=(\roman*), itemsep = \enumsep]
            \item Every point of $\RR^3$ is incident to at most $2m$ lines of $\lines$. 
            \item Every plane in $\RR^3$ contains at most $2m$ lines of $\lines$.
        \end{enumerate}
    \end{lemma}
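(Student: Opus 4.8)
The plan is to prove both parts by exploiting the structure of the lines $\ell_{p,q}$ recorded in Proposition~\ref{prop:line_properties}. Recall from that proposition that $\lines^1 = \bigcup_{p\in\pts}\lines_p^1$ where $\lines_p^1 = \{\ell_{p,q} : q\in\qts\}$, that any two lines $\ell_{p,q}, \ell_{p,r}$ with $q\neq r$ are skew (part~(iv)), and similarly $\ell_{q,p}, \ell_{r,p}$ are skew for $q\neq r$; moreover every line $\ell_{q,p}$ in $\lines^2$ is the reflection of $\ell_{p,q}\in\lines^1$ across the $xy$-plane (part~(v)). The key observation for (i) is that part~(iv) says each family $\lines_p^1$ consists of pairwise skew lines, so \emph{no point of $\RR^3$ lies on two lines of $\lines_p^1$}; hence a point $P$ can meet at most one line from each of the $m$ families $\lines_p^1$, giving at most $m$ lines of $\lines^1$, and symmetrically at most $m$ lines of $\lines^2$, for a total of at most $2m$.

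For part~(ii), I would argue the analogous statement within a fixed plane $\Pi\subset\RR^3$: I want to show $\Pi$ contains at most one line of $\lines_p^1$ for each $p$, and likewise for $\lines_p^2$. The natural way is to invoke part~(iii) of Proposition~\ref{prop:line_properties} (or a direct computation from \eqref{eqn:rotation_line}): the set $\curves_p^1 = \{\ell_{p,x} : x\in\RR^2\}$ of all rotation-lines through a fixed $p$ forms a doubly-ruled or otherwise non-degenerate surface — concretely, the union of the $\ell_{p,x}$ as $x$ ranges over $\RR^2$ sweeps out all of $\RR^3$ with exactly one line of $\curves_p^1$ through each point and, crucially, these lines are pairwise skew. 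A plane $\Pi$ meets a family of pairwise skew lines in at most one of them, \emph{unless} the plane contains a whole pencil or ruling of the surface; so I need to rule out the possibility that infinitely many lines $\ell_{p,x}$ lie on a common plane. This follows because two distinct lines $\ell_{p,q}, \ell_{p,r}$ are skew (part~(iv)) and skew lines cannot be coplanar, so \emph{no two} lines of $\curves_p^1$ — in particular no two of $\lines_p^1$ — lie on a common plane. Hence $\Pi$ contains at most one line of $\lines_p^1$, and summing over the $m$ choices of $p$ (and the same for $\lines_p^2$) yields at most $2m$ lines of $\lines$ in $\Pi$.

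The main obstacle, and the only place that requires care, is making the skew-lines argument airtight: part~(iv) only asserts that $\ell_{p,q}$ and $\ell_{p,r}$ are skew for $q\neq r$ in $\RR^2$, which is exactly the two-line statement I need, so part~(ii) reduces cleanly to ``a plane meets a pairwise-skew family in $\le 1$ line.'' That last fact is immediate: if two lines lie on a common plane they are either parallel or intersecting, hence not skew. So the proof is essentially a bookkeeping argument once parts~(iv) and~(v) of Proposition~\ref{prop:line_properties} are in hand; I do not anticipate any serious analytic difficulty, only the need to state the pigeonhole over the $m$ point-indexed families precisely and to handle $\lines^1$ and $\lines^2$ symmetrically (which part~(v), the reflection across the $xy$-plane, guarantees preserves both the skewness and the coplanarity structure).
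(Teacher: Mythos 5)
Your proof is correct and follows essentially the same route as the paper: both rest on Proposition~\ref{prop:line_properties}(iv), that any two lines in a single family $\lines_p^i$ are pairwise skew, hence no point and no plane can contain two lines from the same family, and a count over the $2m$ families gives the bound. The detour through the surface swept out by $\curves_p^1$ is unnecessary --- the two-line skewness statement alone already rules out coplanarity, as you note at the end.
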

    \addtocounter{theorem}{-1}
    \endgroup
    
\begin{proof}
By Proposition \ref{prop:line_properties}, any two lines in  $\lines_p^i$ are skew. Therefore, no two lines from the same family $\lines_p^i$ are incident to the same point in $\RR^3$ or lie on the same plane. Since we have $2m$ such sets, we obtain both parts of the lemma.
\end{proof}

\subsection{Algebraic Geometry Preliminaries}\label{subsec:alg_geom}
Before studying reguli, we introduce some basic algebraic geometry. For more information, see for example \cite{AlgGeom, Whitney}.

In the following, we will take $\FF$ to be either $\RR$ or $\CC$. Given polynomials $f_1, \dots, f_k \in \FF[x_1, \dots, x_d]$, the \textit{affine variety} $\VV(f_1, \dots, f_k)$ is defined as $$\VV(f_1, \dots, f_k) = \{(a_1, \dots, a_d) \in \FF^d \ |\ f_i(a_1, \dots, a_d) = 0, 1 \leq i \leq k\}.$$
If $U$ is a variety and $U' \subseteq U$ is a variety, we say $U'$ is a \textit{subvariety} of $U$. If we can write $U = V \cup W$ where $V$ and $W$ are proper subvarieties, we say $U$ is \textit{reducible}. Otherwise, we say $U$ is \textit{irreducible.} 

We now state basic properties of varieties without proof. 

\begin{theorem}[Special case of Hilbert's basis theorem] Every variety in $\RR^d$ can be described by a single polynomial.
\end{theorem}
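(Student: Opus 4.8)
The plan is to exploit the one fact that separates $\RR$ from $\CC$ in this setting: over $\RR$, a finite sum of squares vanishes if and only if every summand vanishes. Given a variety $U = \VV(f_1, \dots, f_k) \subseteq \RR^d$, I would introduce the single polynomial
\[
f \;=\; f_1^2 + f_2^2 + \cdots + f_k^2 \;\in\; \RR[x_1, \dots, x_d]
\]
and prove that $U = \VV(f)$.

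First I would verify $U \subseteq \VV(f)$, which is immediate: any $a = (a_1, \dots, a_d) \in U$ satisfies $f_i(a) = 0$ for every $i$, hence $f(a) = 0$. Then I would verify the reverse inclusion $\VV(f) \subseteq U$: if $f(a) = 0$, then $\sum_{i=1}^k f_i(a)^2 = 0$; since each $f_i(a)$ is a real number, each term $f_i(a)^2$ is non-negative, so the sum can vanish only if $f_i(a)^2 = 0$, i.e.\ $f_i(a) = 0$, for every $i$. Thus $a \in U$, and combining the two inclusions gives $U = \VV(f)$, so $U$ is cut out by the single polynomial $f$.

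I do not expect a genuine obstacle here: the argument is a two-line computation. The one point I would make explicit is that working over $\RR$ (rather than $\CC$) is used precisely in the step ``$\sum_i c_i^2 = 0 \Rightarrow c_i = 0$ for all $i$'', which relies on non-negativity of real squares and fails over $\CC$ (for instance $1^2 + i^2 = 0$, so $\VV(x^2+y^2) \neq \VV(x,y)$ in $\CC^2$). I would also remark that the connection to Hilbert's basis theorem only becomes relevant if one allows a variety to be cut out by an infinite family of polynomials: in that case one first uses Noetherianity of $\RR[x_1,\dots,x_d]$ to replace the family by a finite generating set, and then applies the sum-of-squares trick above. Since the definition of affine variety adopted in this paper already involves only finitely many polynomials $f_1, \dots, f_k$, that reduction is not needed.
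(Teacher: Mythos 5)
Your proof is correct: the sum-of-squares trick $f = f_1^2 + \cdots + f_k^2$ is the standard argument for this fact, and your observation about where the real (as opposed to complex) structure is used is exactly the right point to emphasize. The paper states this result without proof, so there is no alternative argument to compare against; your remark that Hilbert's basis theorem only enters when one permits infinitely many defining polynomials is also accurate and explains why the paper's finite-family definition makes the Noetherianity step unnecessary.
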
 

\begin{prop}
    Let $U$ and $W$ be varieties in $\FF^d$. Then, $U \cap W$ and $U \cup W$ are both varieties. 
\end{prop}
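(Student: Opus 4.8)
The plan is to make everything explicit: write $U = \VV(f_1, \dots, f_k)$ and $W = \VV(g_1, \dots, g_\ell)$ for polynomials $f_i, g_j \in \FF[x_1, \dots, x_d]$, and then simply exhibit finite lists of polynomials whose common zero sets are $U \cap W$ and $U \cup W$ respectively.

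For the intersection I would take the concatenation of the two lists and claim $U \cap W = \VV(f_1, \dots, f_k, g_1, \dots, g_\ell)$. This is immediate from the definition of an affine variety: a point $a \in \FF^d$ lies in the right-hand side exactly when $f_i(a) = 0$ for all $i$ and $g_j(a) = 0$ for all $j$, i.e. exactly when $a \in U$ and $a \in W$.

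For the union the natural candidate is the set of all pairwise products, so I would show $U \cup W = \VV(f_i g_j : 1 \le i \le k,\ 1 \le j \le \ell)$. One inclusion is easy: if $a \in U$ then each $f_i(a) = 0$, hence $(f_i g_j)(a) = 0$ for all $i,j$; symmetrically if $a \in W$, so $U \cup W$ is contained in that variety. For the reverse inclusion, suppose $(f_i g_j)(a) = 0$ for all $i,j$ but $a \notin U$; then $f_{i_0}(a) \neq 0$ for some index $i_0$, and from $f_{i_0}(a) g_j(a) = 0$ we deduce $g_j(a) = 0$ for every $j$, so $a \in W$. Thus every point of the product variety lies in $U \cup W$.

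The argument is entirely routine and I do not anticipate any real obstacle; the only mild subtlety is bookkeeping with the two index sets. If one insists on a single defining polynomial, as suggested by the quoted special case of Hilbert's basis theorem, one can additionally replace each finite list above by the sum of squares of its members when $\FF = \RR$, but the lists of polynomials already satisfy the definition of a variety used here, so this refinement is not needed.
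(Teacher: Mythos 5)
Your argument is correct and is exactly the standard one the paper has in mind (the paper states this proposition without proof, listing it among "basic properties of varieties"): concatenate the defining polynomials for the intersection, and take all pairwise products for the union, with the reverse inclusion for the union handled by the case split you give. No gaps.
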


\parag{Degree and Dimension.} There are several non-equivalent definitions of degree of a variety in $\RR^d$. For our purposes, we define the \textit{degree} of a variety $U \in \RR^d$ as 
$$\min_{\substack{f_1, \dots, f_k \in \RR[x_1, \dots, x_d]\\ \VV(f_1, \dots, f_k) = U}} \max_{1 \leq i \leq k} \deg(f_i).$$
The \textit{dimension} $d'$ of an irreducible variety $U \subseteq \RR^d$ is the maximum integer for which there exists a sequence: $$U_0 \subset U_1 \subset \dots \subset U_{d'} = U$$ where all the varieties are irreducible and all the containments are proper. The dimension of a reducible variety $U$ is the maximum dimension of an irreducible component of $U$. As an example, a zero-dimensional variety is a finite set of points, and a one-dimensional variety is a finite union of curves and points. 

We rely on the following results about intersections of varieties.

\begin{theorem}[B\'{e}zout's theorem]\label{thm:bezout}
    Consider $f, g \in \FF[x, y]$. If $f$ and $g$ do not have any common factors, $\VV(f) \cap \VV(g)$ consists of at most $\deg(f) \cdot \deg(g)$ points.
\end{theorem}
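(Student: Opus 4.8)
The plan is to prove B\'ezout's theorem by the method of resultants, after a preliminary generic linear change of coordinates. Recall that if we regard $f, g \in \FF[x,y]$ as polynomials in $y$ with coefficients in $\FF[x]$, their resultant $R(x) = \mathrm{Res}_y(f,g) \in \FF[x]$ has three properties we will exploit: (i) $R \not\equiv 0$ when $f$ and $g$ share no common factor of positive $y$-degree; (ii) $\deg R \le \deg_y f \cdot \deg_y g$ when the coefficients of $y^i$ in $f$ and $g$ have controlled $x$-degree; and (iii) $R(a) = \mathrm{Res}_y\big(f(a,\cdot),\, g(a,\cdot)\big)$ whenever the $y$-degrees of $f$ and $g$ do not drop upon setting $x = a$, in which case $R(a)=0$ as soon as $f(a,\cdot)$ and $g(a,\cdot)$ have a common root.

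First I would establish that $\VV(f) \cap \VV(g)$ is finite. Since $\FF[x,y]$ is a UFD, coprimality of $f$ and $g$ implies, via Gauss's lemma, coprimality in the PID $\FF(x)[y]$; hence there exist $A, B \in \FF(x)[y]$ with $Af + Bg = 1$, and clearing denominators yields $\tilde A f + \tilde B g = D(x)$ for some nonzero $D \in \FF[x]$ and $\tilde A, \tilde B \in \FF[x,y]$. Every point of $\VV(f) \cap \VV(g)$ then has $x$-coordinate a root of $D$, and by the symmetric argument its $y$-coordinate lies in a finite set, so $\VV(f) \cap \VV(g)$ is finite. (The degenerate cases where $f$ or $g$ is constant are immediate.) Note also that replacing $\FF = \RR$ by $\CC$ only increases the number of intersection points, so it does no harm to assume $\FF$ is algebraically closed.

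Next I would choose a good coordinate system. Applying a shear $(x,y) \mapsto (x + sy, y)$, for all but finitely many $s$ the top $y$-coefficient of $f$ becomes the nonzero scalar obtained by evaluating the leading form of $f$ at $s$, and likewise for $g$; dividing by these scalars we may assume $f$ and $g$ are \emph{monic in $y$} with $\deg_y f = \deg f$ and $\deg_y g = \deg g$. Since $\VV(f)\cap\VV(g)$ is finite, we may further choose $s$ so that distinct intersection points have distinct $x$-coordinates (only finitely many $s$ are excluded, one per pair of points), and the number of intersection points is unchanged by this invertible transformation. Now $f$, being monic in $y$, is primitive in $\FF[x][y]$, so property (i) gives $R(x) = \mathrm{Res}_y(f,g) \not\equiv 0$; a weighted-degree count on the $(\deg f + \deg g)\times(\deg f + \deg g)$ Sylvester matrix, using that the coefficient of $y^i$ in $f$ has $x$-degree $\le \deg f - i$ (and similarly for $g$), gives $\deg R \le \deg f \cdot \deg g$ by property (ii). For any $(a,b) \in \VV(f) \cap \VV(g)$, monicity keeps the $y$-degrees from dropping, so property (iii) together with the fact that $y - b$ divides both $f(a,\cdot)$ and $g(a,\cdot)$ forces $R(a) = 0$. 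Thus the $x$-coordinates of points of $\VV(f)\cap\VV(g)$ lie among the at most $\deg f \cdot \deg g$ roots of $R$, and since we arranged these $x$-coordinates to be distinct, $|\VV(f)\cap\VV(g)| \le \deg f \cdot \deg g$.

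The main obstacle is simply marshalling the standard resultant facts (i)--(iii) with correct bookkeeping, in particular the bound $\deg R \le \deg f \cdot \deg g$, which is exactly where the total-degree hypothesis (as opposed to merely a $y$-degree bound) is used. A subtler structural point is the ordering: finiteness of $\VV(f) \cap \VV(g)$ must be proved \emph{before} selecting the separating shear, since ``finitely many bad directions'' presupposes a finite point set. One could instead pass to $\PP^2$ and argue with intersection multiplicities, but the affine resultant route keeps the argument self-contained relative to the algebraic tools already introduced in this section.
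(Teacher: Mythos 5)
The paper does not prove this statement: Theorem~\ref{thm:bezout} is listed among the algebraic-geometry preliminaries that are ``stated without proof,'' as a classical background fact, so there is no proof of record to compare against. Your resultant argument is the standard textbook proof of the weak (set-theoretic) form of B\'ezout, and it is correct. The logical skeleton is sound: finiteness via the B\'ezout identity in $\FF(x)[y]$ after Gauss's lemma, then a shear making $f$ and $g$ monic in $y$ with $\deg_y = \deg$ and separating the $x$-coordinates of the (already known to be finitely many) intersection points, then $\deg_x \mathrm{Res}_y(f,g) \le \deg f \cdot \deg g$ via the weighted count on the Sylvester matrix, and finally vanishing of the resultant at each intersection abscissa. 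You correctly flag that finiteness must precede the choice of shear. Two small remarks: first, the reduction to $\CC$ is harmless but also unnecessary --- every step (nonvanishing of the leading form at all but finitely many $s$, the root count for $R$, and $R(a)=0$ from the common factor $y-b$) already works over $\RR$ since $\RR$ is infinite; if you do keep the reduction, you should note that coprimality over $\RR$ persists over $\CC$ (the complex gcd is conjugation-invariant, hence real up to a scalar). Second, the proof ultimately rests on the three resultant properties you cite as black boxes; that is a reasonable level of granularity here, but property (ii) with the \emph{total}-degree bookkeeping is the one genuinely nontrivial input, and a careful write-up would either prove the permutation-expansion estimate or cite it precisely.
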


\begin{theorem}[\cite{Joints}]\label{thm:r3_bezout}
    Consider $f, g \in \FF[x, y, z]$. If $f$ and $g$ do not have any common factors, $\VV(f) \cap \VV(g)$ contains at most $\deg(f) \cdot \deg(g)$ lines. 
\end{theorem}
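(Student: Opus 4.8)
The plan is to reduce this three-dimensional statement to the planar B\'ezout bound of Theorem~\ref{thm:bezout} by intersecting $\VV(f)\cap\VV(g)$ with a generic plane. Throughout I may assume $f$ and $g$ are nonconstant, since otherwise $\deg(f)\deg(g)=0$ and $\VV(f)\cap\VV(g)$ contains no lines.

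I would first reduce to the case $\FF=\CC$. If $f,g\in\RR[x,y,z]$ have no common factor over $\RR$, they have none over $\CC$: if an irreducible $h\in\CC[x,y,z]$ divided both, then so would its complex conjugate $\bar h$, and then either $h\bar h$ (when $h\not\sim\bar h$) or a suitable scalar multiple of $h$ (when $h\sim\bar h$) would be a nonconstant \emph{real} common factor, contrary to hypothesis. Since every real line in $\VV_\RR(f)\cap\VV_\RR(g)$ complexifies to a distinct complex line in $\VV_\CC(f)\cap\VV_\CC(g)$, the complex case implies the real one, so I may assume $\FF=\CC$.

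Write $d_1=\deg f$, $d_2=\deg g$, and suppose for contradiction that $\VV(f)\cap\VV(g)$ contains $N:=d_1d_2+1$ distinct lines $\ell_1,\dots,\ell_N$. The key step is to choose a plane $H\subset\CC^3$ that is simultaneously: (a) parallel to no $\ell_i$ and containing no $\ell_i$, so each $\ell_i$ meets $H$ in exactly one point $p_i$; (b) disjoint from the finitely many points $\ell_i\cap\ell_j$ with $i\neq j$, so the $p_i$ are pairwise distinct; and (c) such that $f|_H$ and $g|_H$, regarded as polynomials on $H\cong\CC^2$, have no common factor. Given such an $H$, all $N$ points $p_i$ lie in $\VV(f|_H)\cap\VV(g|_H)$, while $\deg(f|_H)\le d_1$ and $\deg(g|_H)\le d_2$; hence Theorem~\ref{thm:bezout} gives $N\le d_1d_2=N-1$, a contradiction, which proves the bound.

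What remains is to exhibit an $H$ satisfying (a)--(c) at once; since (a) and (b) each exclude only a non-dense family of planes, the real content is (c). Here I would use that, because $f$ and $g$ have no common factor, the variety $X:=\VV(f)\cap\VV(g)$ has dimension at most $1$ --- a two-dimensional component of $X$ would equal $\VV(p)$ for an irreducible factor $p\mid f$, and $\VV(p)\subseteq\VV(g)$ would force $p\mid g$. A generic plane contains none of the finitely many curve-components of $X$ and meets each of them in finitely many points, so $X\cap H=\VV(f|_H)\cap\VV(g|_H)$ is finite for generic $H$; but a nonconstant common factor $h$ of $f|_H$ and $g|_H$ would make $\VV(h)\subseteq X\cap H$ an infinite planar curve, which is impossible. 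Hence generic $H$ also satisfies (c). The only real obstacle is organizational --- making ``generic'' precise and checking that conditions (a)--(c) each cut out a dense open set of planes (finiteness of the pairwise intersections $\ell_i\cap\ell_j$, behaviour of a variety under a generic hyperplane section) --- with the single structural ingredient being the dimension estimate $\dim X\le 1$, which is exactly where coprimality of $f$ and $g$ is used.
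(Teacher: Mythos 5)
The paper does not prove this statement; it is quoted from \cite{Joints} as a known result, so there is no in-paper argument to compare against. Your generic-plane-section proof is correct, and it is essentially the standard argument for this lemma in the literature: pass to $\CC$, cut with a plane $H$ meeting each of $\deg(f)\deg(g)+1$ hypothetical lines in a distinct point, and apply planar B\'ezout (Theorem \ref{thm:bezout}) to the restrictions $f|_H$ and $g|_H$. The two non-routine ingredients are both present and handled correctly: the transfer of coprimality from $\RR[x,y,z]$ to $\CC[x,y,z]$ via conjugate factors, and the dimension bound $\dim\bigl(\VV(f)\cap\VV(g)\bigr)\le 1$, which is exactly what makes your condition (c) generic --- since $\VV(f|_H)\cap\VV(g|_H)$ is just $\VV(f)\cap\VV(g)\cap H$, a nonconstant common factor of the restrictions would force an infinite planar curve inside a finite set. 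The genericity checks you defer (that each of (a)--(c) excludes only a proper closed family of planes, and that $H\not\subseteq\VV(f)$ and $H\not\subseteq\VV(g)$ so the restrictions are nonzero of degrees at most $\deg f$ and $\deg g$) are indeed routine; note also that in the contradiction setup the degenerate case $f|_H\equiv 0$ cannot occur once $X\cap H$ is finite and nonempty. I see no gaps.
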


\begin{theorem}\label{thm:irred_components}
    Let $f \in \FF[x_1, x_2, \dots, x_d]$ be a polynomial of degree $D$. Then, the number of irreducible components of $\VV(f)$ is $O_{d, D}(1)$.
\end{theorem}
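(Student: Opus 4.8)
I would separate the immediate complex case from the substantive real case. Over $\FF=\CC$ the statement is easy: if $d=1$ then $\VV(f)$ is a set of at most $D$ points, while if $d\ge 2$ then $\VV(f)$ is a hypersurface of pure codimension one, so by unique factorization $\VV(f)=\bigcup_j\VV(g_j)$, where $g_1,\dots,g_k$ are the distinct irreducible factors of $f$; each $\VV(g_j)$ is irreducible and $k\le\deg f=D$. Over $\RR$ this fails, because the real locus of a single complex irreducible variety can itself be reducible: $\VV(x^2+y^2)\subseteq\RR^2$ is a single point, and $\VV\big(\sum_{i=1}^d(\prod_{j=0}^{k-1}(x_i-j))^2\big)\subseteq\RR^d$ is a $k^d$-point grid cut out by a polynomial of degree $2k$, so the honest count is exponential in $d$, consistent with ``$O_{d,D}(1)$'' but not with $\mathrm{poly}(D)$. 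For the real case I would study $X=\VV(f)\subseteq\RR^d$ directly and bound its number of irreducible components through the topology of its regular locus $X_{\mathrm{reg}}$ (the set of points of $X$ where the local ring is regular).

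The key reduction is the inequality
\[
\#\{\text{irreducible components of }X\}\ \le\ \#\{\text{connected components of }X_{\mathrm{reg}}\}.
\]
Indeed, at a regular point $p$ the variety $X$ is locally irreducible, so exactly one irreducible component of $X$ passes through $p$, and ``the component through $p$'' is locally constant on $X_{\mathrm{reg}}$. Hence $X_{\mathrm{reg}}$ is the disjoint union, over irreducible components $V$ of $X$, of the sets $U_V$ of regular points lying on $V$ and on no other component, and each $U_V$ is open (hence also closed) in $X_{\mathrm{reg}}$. Since no component is contained in the union of the others, $V\setminus\big(\mathrm{Sing}(V)\cup\bigcup_{W\ne V}W\big)$ is a nonempty subset of $U_V$, so each $U_V$ is nonempty and contains at least one connected component of $X_{\mathrm{reg}}$; summing over $V$ gives the displayed bound.

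It then remains to bound $\#\{\text{connected components of }X_{\mathrm{reg}}\}$ by $O_{d,D}(1)$. The plan is to exhibit $X_{\mathrm{reg}}$ as a semialgebraic subset of $\RR^d$ whose defining polynomials have number and degree bounded in terms of $d$ and $D$, and then invoke the quantitative Oleinik--Petrovsky--Milnor--Thom bound: a semialgebraic subset of $\RR^d$ defined by polynomials of degree at most $D'$ has $(D')^{O(d)}$ connected components. The content is the first part. Unlike over $\CC$, one cannot read regularity of $X$ off the single polynomial $f$ (for example $\VV(x^2+y^2)$ is a smooth line in $\RR^3$ on which $\nabla(x^2+y^2)$ vanishes identically), so one must use a generating set of the ideal $I(X)$ — whose degrees are bounded in terms of $d$ and $D$, either by an effective real Nullstellensatz or by treating the equidimensional pieces of $X$ separately — stratify $X$ by local dimension (a semialgebraic condition of controlled complexity by Tarski--Seidenberg), and on each stratum impose the Jacobian rank condition defining regularity. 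Assembling the strata expresses $X_{\mathrm{reg}}$ in the required form, and the Milnor--Thom estimate closes the argument.

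I expect the main obstacle to be exactly this effective semialgebraic description of $X_{\mathrm{reg}}$ for a possibly non-equidimensional, badly singular real variety; this is in essence the finiteness content of Whitney's structure theory for real algebraic sets, and once it is in place the Milnor--Thom bound is used as a black box. A pure induction on the ambient dimension $d$ via hyperplane sections looks less convenient here, since a positive-dimensional real variety can be missed by an open set of affine hyperplanes and images of real varieties under projection are only semialgebraic, so the regular-locus route above seems preferable.
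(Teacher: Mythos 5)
The paper does not prove this statement: it appears in Section~\ref{subsec:alg_geom} in a list explicitly prefaced by ``we now state basic properties of varieties without proof,'' so there is no argument in the paper to compare yours against. Judged on its own, your outline is the standard route to this fact and is sound in architecture. The complex case via unique factorization is correct and complete. Your examples ($\VV(x^2+y^2)$ and the $k^d$-point grid of degree $2k$) correctly show both that the real case does not reduce to factoring $f$ and that any true bound must be exponential in $d$, so the $O_{d,D}(1)$ formulation is the right target. The reduction
\[
\#\{\text{irreducible components of }X\}\ \le\ \#\{\text{connected components of }X_{\mathrm{reg}}\}
\]
followed by an Oleinik--Petrovsky--Milnor--Thom count is essentially how the finiteness (and its quantitative form) is established in the real algebraic geometry literature, e.g.\ in Whitney's structure theory and in Bochnak--Coste--Roy.

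That said, you have correctly identified, but not closed, the step that carries all the difficulty: exhibiting $X_{\mathrm{reg}}$ as a semialgebraic set whose description has complexity bounded in terms of $d$ and $D$ alone. This requires degree bounds on generators of the real ideal $I(X)$ (or on a stratification by local dimension), and ``an effective real Nullstellensatz or treating the equidimensional pieces separately'' is naming the problem rather than solving it; as written, the argument is a reduction to known theorems rather than a proof. Two smaller cautions: for real varieties the notion of ``regular point'' needs care (the local ring of the real coordinate ring versus smooth points of maximal local dimension do not always agree, cf.\ the Whitney umbrella), and the nonemptiness of the regular locus of an irreducible real variety --- which you need to conclude each $U_V\neq\emptyset$ --- is itself a theorem (Bochnak--Coste--Roy, Prop.\ 3.3.14), not a formality. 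Since the paper only needs the qualitative statement as a black box, citing it (as the paper does) is legitimate; if you want a self-contained proof, the effective description of $X_{\mathrm{reg}}$ is the piece you still owe.
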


We also require the following distinct distances bound, which is an application of B\'{e}zout's theorem.

\begin{lemma}\label{lem:curve_bipartite_bound}
    Consider finite sets $A, B \subset \RR^2$, where $|A| \geq 2$ and $|B| = x$. Suppose that all the points in $B$ lie on a one-dimensional algebraic variety $\gamma$ of degree $D$. Then, $D(A, B) = \Omega_D(x)$. 
\end{lemma}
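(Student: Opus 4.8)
The plan is to prove the contrapositive-flavored statement by fixing a point $a_0 \in A$ and counting incidences between the $x$ points of $B$ and the family of circles centered at $a_0$. Choose a second point $a_1 \in A$ distinct from $a_0$; this is where the hypothesis $|A| \ge 2$ is used. For each distinct distance $\delta$ occurring in $d(a_0, B)$, the points of $B$ realizing that distance all lie on the circle $C_\delta$ of radius $\delta$ centered at $a_0$. So if $D(A,B) = D(a_0, B)$ were small, say equal to $k$, then the $x$ points of $B$ would be covered by $k$ concentric circles. The key observation is that each such circle meets the one-dimensional variety $\gamma$ in only $O_D(1)$ points, by B\'ezout's theorem (Theorem \ref{thm:bezout}): a circle is the zero set of a degree-$2$ polynomial, $\gamma$ has a defining polynomial of degree $D$ (using Hilbert's basis theorem and the definition of degree), and provided the circle is not a component of $\gamma$ the intersection has at most $2D$ points. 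Since $\gamma$ is one-dimensional it has only finitely many (in fact $O_D(1)$, by Theorem \ref{thm:irred_components}) irreducible components that are curves, so at most $O_D(1)$ circles centered at $a_0$ can be components of $\gamma$; throwing those away changes the count by at most $O_D(1)$ points of $B$.

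Concretely, I would argue as follows. Let $k = D(a_0, B)$ and let $C_{\delta_1}, \dots, C_{\delta_k}$ be the distinct circles centered at $a_0$ through the points of $B$. Partition these circles into those that are contained in $\gamma$ (there are $O_D(1)$ of them, since each is a distinct irreducible curve component) and those that are not. The circles of the second type each contribute at most $2D$ points of $B \cap \gamma = B$, by B\'ezout. Hence
$$x = |B| \le O_D(1) \cdot (\text{points on circle-components}) + 2D \cdot k.$$
The subtlety is the first term: a circle component could a priori contain all of $B$. But a circle is a bounded curve, and more to the point we can instead bound the contribution of each circle-component by noting it still only needs to be counted once toward $D(a_0,B)$; the cleanest fix is to observe that the points of $B$ lying on circle-components are covered by $O_D(1)$ circles, each contributing one distinct distance, so those points already account for only $O_D(1)$ of the distances, and we may simply bound $|B|$ restricted to the non-component circles: $x - (\text{stuff}) \le 2Dk$ is not quite enough, so instead I bound $x \le 2D \cdot k + (\text{points of } B \text{ on the } O_D(1) \text{ component circles})$. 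To close this, note that a point of $B$ on a component circle $C$ is still at some distance $\delta$ from $a_0$ equal to the radius of $C$, so these points contribute to at most $O_D(1)$ of the $k$ distinct distances — this does not bound their number.

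The honest resolution, and the step I expect to be the main obstacle, is to use the second point $a_1$: intersect the picture with circles centered at $a_1$ as well. A point $b \in B$ is pinned down (up to two choices) by the pair $(d(a_0,b), d(a_1,b))$. If $D(A,B) = k$, there are at most $k$ choices for each coordinate, so at most $2k^2$ points — giving $x \le 2k^2$, i.e. $k = \Omega(\sqrt{x})$, which is weaker than claimed. To get the linear bound $k = \Omega_D(x)$ one must genuinely use that $B \subseteq \gamma$: the circles centered at $a_0$ already cut $\gamma$ into $O_D(1)$-size pieces each, except for the $O_D(1)$ circle-components. For a circle-component $C$, the circles centered at $a_1$ cut $C$ into $O_D(1)$-point pieces (two circles meet in $\le 2$ points, and $C \ne$ any circle about $a_1$ unless $a_0 = a_1$), so $|B \cap C| \le 2 D(a_1, B) \le 2k$. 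Summing: $x \le 2D\,k + O_D(1)\cdot 2k = O_D(k)$, hence $k = D(A,B) \ge D(a_0,B)$... wait — we need $D(A,B)$, which is at least $\max(D(a_0,B), D(a_1,B))$, so both $D(a_0,B) \le D(A,B)$ and $D(a_1,B) \le D(A,B)$, and the inequality $x = O_D(D(A,B))$ follows, giving $D(A,B) = \Omega_D(x)$ as required. The main work is making the circle-component case airtight — verifying that a circle centered at $a_0$ and a circle centered at $a_1$ cannot share a common polynomial factor when $a_0 \ne a_1$, so B\'ezout applies there too.
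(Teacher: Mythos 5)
Your final argument is correct and follows essentially the same route as the paper: cover $B$ by circles centered at a point of $A$ and apply B\'ezout's theorem to conclude each circle meets $\gamma$ in $O_D(1)$ points, so $\Omega_D(x)$ circles (hence distances) are needed. The only difference is how the degenerate case of a circle that is itself a component of $\gamma$ is dispatched: the paper first pigeonholes onto a single irreducible component $\gamma'$ carrying $\Omega_D(x)$ points of $B$ and uses $|A|\geq 2$ to pick a center $a$ that is not the center of $\gamma'$, so every circle about $a$ is a distinct irreducible curve from $\gamma'$ and B\'ezout applies uniformly, whereas you keep all of $\gamma$ and use the second point $a_1$ to cut the $O_D(1)$ circle-components into pieces of size at most $2D(a_1,B)$ --- both resolutions are valid and use the hypothesis $|A|\geq 2$ for the same underlying reason.
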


\begin{proof}
    By Theorem \ref{thm:irred_components}, the variety $\gamma$ has $O_D(1)$ irreducible components. By the pigeonhole principle, there exists a one-dimensional irreducible component $\gamma'$ of $\gamma$ that contains $\Omega_D(n)$ points of $B$. Pick some $a \in A$. If $\gamma'$ is a circle, choose an $a$ that is not the center of $\gamma'$. Consider the set of circles $\circs$ centered at $a$ and containing at least one point from $B$. For every circle $c \in \circs$, since $c$ and $\gamma'$ are distinct irreducible curves, they do not have a common component. By Theorem \ref{thm:bezout}, we have that $|c \cap \gamma'| \leq 2D$. Since $\circs$ covers all the points in $B$, we have that $|\circs| = \Omega_D(x)$. Therefore, $D(A, B) \geq D(a, B) = \Omega_D(x)$.
\end{proof}

\parag{Zariski Topology.} The \textit{Zariski topology} on a variety $U \subseteq \FF^d$ is the topology where the closed sets are algebraic varieties in $\FF^d$. Thus, an open set is $U \setminus W$ for a variety $W \subset U$. If $X \subset \FF^d$, the \textit{Zariski closure} $\overline{X}$ is the smallest variety in $\FF^d$ that contains $U$. In particular, $X$ is Zariski open in $\overline{X}$ if $\overline{X} \setminus X$ is a variety.

\parag{Complexification.} Given a variety $U \subset \RR^d$, the \textit{complexification} $U^* \subset \CC^d$ is the smallest complex variety that contains $U$. Every complex variety that contains $U$ also contains $U^*$. Such a complexification always exists, and $U$ is precisely the set of real points in $U^*$ \cite{Whitney}. 
We will use $\Re(V)$ to denote the set of all real points of a complex variety $V$. The dimension of a complex variety is defined in the same way as the dimension of a real variety. For the degree of a complex variety, see for example \cite[Definition 18.1]{Complexity}. We only require the standard property that a real variety $U$ has degree $O(1)$ if and only if its complexification $U^*$ has degree $O(1)$.

\parag{Constructible Sets and Projections.}
A set $X$ is \textit{constructible} if there exist varieties $X_1, X_2, \dots, X_{\ell}$ such that $\dim X_{j+1} < \dim X_j$ for every $1 \leq j \leq \ell - 1$, and 
\begin{equation}\label{eqn:constructible}
X = \left(\left(\left(X_1 \setminus X_2\right) \cup X_3 \right) \setminus X_4 \dots \right).
\end{equation}
Note that $X$ is Zariski open in its Zariski closure $\overline{X}$. We define the \textit{complexity} of $X$ to be $\min(\deg(X_1) + \deg(X_2) + \dots + \deg(X_\ell))$ where the minimum is taken over all representations of $X$ of the form \eqref{eqn:constructible}.  This definition is not standard. However, since we are interested only in constructible sets of bounded complexity, any reasonable definition of complexity would work equally well. For further details, see for example \cite[Section 3]{Complexity}. For a constructible set $X \subseteq \CC^d$, we will denote by $\Re(X)$ the set of real points contained in $X$. Then $\Re(X)$ is a constructible set in $\RR^d$.

In both $\RR^d$ and $\CC^d$, a projection of a variety need not be a variety. For instance, if we project a circle in the $xz$-plane of $\RR^3$ onto the first two coordinates, then we obtain a line segment. In $\RR^d$, projections of constructible sets need not be constructible. However, in the case of $\CC^d$, we have the following result. 
\begin{theorem}[\cite{Complexity}]\label{thm:constructible_projection}
    Let $X \subset \CC^d$ be a constructible set of dimension $d'$ and complexity $k$. Let $\pi: \CC^d \to \CC^e$ be a projection on $e$ out of $d$ coordinates of $\CC^d$. Then, $\pi(X)$ is constructible set of dimension at most $d'$ and complexity $O_{k, d}(1)$.
\end{theorem}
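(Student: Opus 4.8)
The plan is to treat this as the \emph{effective} form of the Chevalley--Tarski theorem. Over $\CC$, constructible sets are exactly the sets definable by quantifier-free first-order formulas in the language of rings, the complexity measures the total degree of the polynomials occurring in such a formula, and a coordinate projection corresponds to prefixing an existential quantifier; so the statement is precisely quantifier elimination for algebraically closed fields, together with an explicit bound on the growth of the degrees. First I would reduce to the case $e = d-1$: a general projection on $e$ of the $d$ coordinates is a composition of $d-e$ one-coordinate projections, and since both assertions in the conclusion are stable under composition --- the complexity bound compounding only a fixed ($\le d$) number of times --- it suffices to prove the theorem for $\pi \colon \CC^{d} \to \CC^{d-1}$ dropping the last coordinate.

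For the dimension bound I would argue directly: $\pi(X) \subseteq \overline{\pi(X)} \subseteq \overline{\pi(\overline X)}$, and $\dim \overline X = \dim X = d'$ since $X$ is Zariski-dense in $\overline X$. Decomposing $\overline X$ into its finitely many irreducible components and applying the fibre-dimension theorem to each --- for an irreducible variety $V$, the dominant map $\pi|_V \colon V \to \overline{\pi(V)}$ satisfies $\dim V = \dim \overline{\pi(V)} + \delta$ with $\delta \ge 0$ the dimension of a generic fibre --- gives $\dim \overline{\pi(V)} \le \dim V \le d'$, hence $\dim \pi(X) \le d'$; iterating over the $d-e$ eliminations preserves this.

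For the complexity bound I would first normalise $X$. Expanding the iterated unions and differences in \eqref{eqn:constructible} and using standard bounds on the number and degree of polynomials needed to cut out a bounded-degree variety, write $X$ as a union of $O_k(1)$ cells $C = \{x : f_1(x) = \cdots = f_s(x) = 0,\ g(x) \ne 0\}$ whose defining polynomials have total degree $O_k(1)$. Projection commutes with finite unions, so it suffices to handle a single cell; trading the lone inequation for a slack variable, $g(x)\ne 0 \iff \exists t\,(t\,g(x)=1)$, realises $\pi(C)$ as the image under a coordinate projection of the honest variety $W = \VV(f_1,\dots,f_s,\ t g-1) \subset \CC^{d+1}$, which has degree $O_k(1)$. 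So everything reduces to the claim: \emph{the image of a degree-$D$ variety in $\CC^N$ under a coordinate projection $\sigma$ is constructible of complexity $O_{N,D}(1)$.} For this I would use effective elimination: the Zariski closure $\overline{\sigma(W)}$ is the zero set of the associated elimination ideal and, via resultants (the resultant of two polynomials of degree $\le D$ has degree $O(D^2)$) together with a reduction from several polynomials to pairs, has degree bounded in terms of $N$ and $D$; then the Chevalley decomposition $\sigma(W) = \big(\overline{\sigma(W)}\setminus Z\big)\cup\sigma(W')$, with $Z \subsetneq \overline{\sigma(W)}$ and $W'\subsetneq W$ of controlled degree, lets one recurse, the recursion terminating after $O_D(1)$ steps because the dimension strictly drops while, by Theorem~\ref{thm:irred_components}, degrees and component counts stay bounded, each step costing at most a fixed power in the degree.

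The step I expect to be the main obstacle is this last one: upgrading the \emph{qualitative} Chevalley theorem to a \emph{quantitatively effective} version, i.e.\ controlling simultaneously the degree blow-up at each elimination and the number of eliminations. This is exactly where effective elimination / effective Nullstellensatz-type degree bounds enter, and it is the content imported from~\cite{Complexity}; in a self-contained account I would simply cite the known effective quantifier-elimination bounds for algebraically closed fields rather than reprove them.
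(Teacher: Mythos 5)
The paper does not prove this statement at all: Theorem~\ref{thm:constructible_projection} is imported verbatim from \cite{Complexity} and used as a black box, so there is no in-paper argument to compare yours against. Judged on its own, your sketch follows the standard route --- effective Chevalley--Tarski via quantifier elimination for algebraically closed fields --- and the outline is sound: the reduction to single-coordinate projections, the dimension bound via the fibre-dimension theorem applied to the irreducible components of $\overline{X}$, the normalisation of \eqref{eqn:constructible} into finitely many cells $\{f_1=\cdots=f_s=0,\ g\neq 0\}$, and the Rabinowitsch slack-variable trick reducing everything to images of honest varieties are all correct and are essentially how the cited source proceeds. The one substantive step you leave open is the one you yourself flag: the quantitative control of the elimination (degree growth of resultants when leading coefficients vanish, the branching of the Chevalley recursion into components, and the verification that the total blow-up over at most $d$ eliminations stays $O_{k,d}(1)$). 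Since the paper itself defers exactly this content to \cite{Complexity}, citing the known effective quantifier-elimination bounds there, as you propose, is the appropriate resolution; I see no error in what you have written, only that this final step is a citation rather than a proof.
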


\subsection{Reguli} \label{subsec:reguli}
When studying lines in reguli, while we use some tools from \cite{GuthKatz}, we present a somewhat different argument. We first define a regulus and describe some properties of reguli. We then characterise reguli that contain many lines, and provide a geometric approach to bound the number of lines in any regulus.

For three pairwise-skew lines $\ell_1, \ell_2, \ell_3$, let $\Psi(\ell_1, \ell_2, \ell_3)$ be the set of lines in $\RR^3$ that intersect all three lines.  A \textit{regulus} is the Zariski closure of $\Psi(\ell_1, \ell_2, \ell_3)$ for three pairwise-skew lines $\ell_1, \ell_2, \ell_3$. We denote such as regulus as $R(\ell_1, \ell_2, \ell_3)$.

\parag{Properties of Reguli.} It is known that all reguli are quadratic surfaces in $\RR^3$, i.e. it can be written as $R = \VV(f)$ where $f \in \RR[x, y, z]$ is of degree 2 (see for example, \cite[Section 5.2]{AdamTextbook}). If the three pairwise-skew lines $\ell_1, \ell_2, \ell_3$ lie in parallel planes, then the corresponding regulus is a hyperbolic paraboloid. Otherwise, the corresponding regulus is a hyperboloid of one sheet. 

Reguli are \textit{doubly-ruled}. That is, for every point $p$ on a regulus $R$, there exist at least two lines that are contained in $R$ and incident to $p$. The set of lines that are contained in a regulus $R$ can be partitioned into two disjoint sets, called \textit{rulings}. The lines of a ruling are pairwise-disjoint and pairwise-skew, and their union is $R$. In the regulus $R(\ell_1, \ell_2, \ell_3)$, the lines in $\Psi(\ell_1, \ell_2, \ell_3)$ lie on one ruling of the regulus, and $\ell_1, \ell_2, \ell_3$ lie on the other ruling. 

\begin{prop}\label{prop:zariski_open}
     The set $\Psi(\ell_1, \ell_2, \ell_3)$ is Zariski open in $R(\ell_1, \ell_2, \ell_3)$. That is, we can write $$R(\ell_1, \ell_2, \ell_3) = Z \cup Z_0$$ where $Z$ is the union of all lines in $\Psi(\ell_1, \ell_2, \ell_3)$, and $Z_0$ is a one-dimensional variety with degree $O(1)$.
\end{prop}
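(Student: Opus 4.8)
The plan is to pass to the projective closure, where a regulus becomes a smooth quadric surface and the combinatorics of its two rulings is entirely standard. First I would let $\bar R \subset \PP^3$ denote the projective closure of $R = R(\ell_1, \ell_2, \ell_3)$. Since $R$ is a hyperboloid of one sheet or a hyperbolic paraboloid, $\bar R$ is a \emph{smooth} quadric in $\PP^3$, so it carries exactly two rulings $\mathcal R_1, \mathcal R_2$; through each point of $\bar R$ passes exactly one line of each ruling, two lines of the same ruling are skew, and two lines of different rulings meet in exactly one point of $\PP^3$. Using the properties of reguli recalled above, $\ell_1, \ell_2, \ell_3$ lie in one ruling, say $\mathcal R_1$, the lines of $\Psi := \Psi(\ell_1, \ell_2, \ell_3)$ lie in $\mathcal R_2$, and the affine lines of $\mathcal R_2$ are pairwise disjoint with union exactly $R$. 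Consequently $Z = \bigcup_{m \in \Psi} m \subseteq R$, and since the affine lines of $\mathcal R_2$ partition $R$, one gets the clean identity: $R \setminus Z$ is the union of the affine lines of $\mathcal R_2$ that do not belong to $\Psi$.

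Next I would bound the number of such lines. An affine line $m \in \mathcal R_2$ meets $\overline{\ell_i}$ in exactly one point of $\PP^3$ for each $i$, so $m \notin \Psi$ if and only if, for some $i$, that intersection point lies on the plane at infinity, i.e. if and only if $m$ passes through the point at infinity $P_i$ of $\ell_i$. Because $\ell_1, \ell_2, \ell_3$ are pairwise skew they have pairwise distinct directions, so $P_1, P_2, P_3$ are three distinct points of $\bar R$, and through each $P_i$ there is a unique line $\bar m_i$ of $\mathcal R_2$. Hence every line of $\mathcal R_2 \setminus \Psi$ is one of $\bar m_1, \bar m_2, \bar m_3$; those among them lying in the plane at infinity contribute nothing to $R \subset \RR^3$, so the affine lines of $\mathcal R_2 \setminus \Psi$ number at most three. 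I would then take $Z_0 := R \setminus Z$: it is a union of at most three lines of $\RR^3$, hence an algebraic variety of dimension at most one and degree at most three, that is, degree $O(1)$ (and in the degenerate case $Z = R$ one may pad $Z_0$ with an arbitrary line contained in $R$). Since $R \setminus Z = Z_0$ is Zariski closed in $R$, the set $Z$ --- equivalently, $\Psi$ --- is Zariski open in $R$, which is the assertion.

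The step I expect to be the main obstacle is the affine-versus-projective bookkeeping: a priori the set $\Psi$ could be missing infinitely many lines of its ruling, and the whole purpose of the projective picture is to show that the deficit is controlled by just the three points at infinity $P_1, P_2, P_3$. A secondary point one should check, although it is immediate from the classification of reguli recalled in this section, is that the affine Zariski closure $R$ of $\Psi$ really coincides with $\bar R \cap \RR^3$ and that the ruling facts for smooth quadrics transfer to this model; with those in hand, the rest is routine.
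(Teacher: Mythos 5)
Your proof is correct, but it takes a genuinely different route from the paper. The paper complexifies, parametrizes non-horizontal lines by points of $\CC^4$, builds an incidence variety in $\CC^7$, and invokes Theorem \ref{thm:constructible_projection} to show that the union of the transversal lines is a constructible set; it then has to handle several real-versus-complex issues (complex lines whose real locus is a single point, horizontal lines omitted by the parametrization) before concluding that $Z$ is Zariski open with an unspecified $O(1)$-degree complement. You instead pass to the projective closure $\bar R \subset \PP^3$, which is a smooth quadric, and read the answer off the standard incidence combinatorics of its two rulings: an affine line of the ruling containing $\Psi(\ell_1,\ell_2,\ell_3)$ fails to be transversal to $\ell_i$ in $\RR^3$ exactly when it passes through the point at infinity $P_i$ of $\ell_i$, and there is only one such line for each of the three points $P_i$. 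This is shorter, avoids constructible sets and complexification entirely, and gives a stronger, explicit conclusion: $Z_0$ is a union of at most three lines. It also yields Corollary \ref{corr:regulus_ruling_line} immediately (a line of one ruling misses at most \emph{one} affine line of the other, namely the one through its point at infinity), and it is precisely the projective route that Remark \ref{rem:guth_katz_mistake} mentions but declines to pursue. The two points you flag as needing care are indeed the only ones, and both are routine: (i) the projective closure of a hyperboloid of one sheet or a hyperbolic paraboloid is a rank-four quadric whose two rulings are real and satisfy the stated incidence relations in $\RR\PP^3$, and (ii) any line meeting $\ell_1,\ell_2,\ell_3$ does so in three distinct points (since the $\ell_i$ are pairwise skew) and hence lies on the quadric by B\'ezout, so $\Psi(\ell_1,\ell_2,\ell_3)$ is contained in the second ruling. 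Your handling of the degenerate cases (ruling lines contained in the plane at infinity, and padding $Z_0$ when $R\setminus Z$ is empty) is also fine.
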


\begin{proof}
    Applying a generic isometry of $\RR^3$, we may assume that $\ell_1, \ell_2, \ell_3$ are non-horizontal and that $R(\ell_1, \ell_2, \ell_3)$ contains $O(1)$ horizontal lines. Denote $Z$ as the union of all lines in $\Psi(\ell_1, \ell_2, \ell_3)$. Since $R(\ell_1, \ell_2, \ell_3)$ is the Zariski closure of $Z$, it suffices to show that $Z$ is constructible. 
    
    Complexify the three lines $\ell_1, \ell_2, \ell_3$ to obtain lines in $\CC^3$. Abusing notation, we also refer to these complex lines as $\ell_1, \ell_2, \ell_3$. We say that a non-horizontal line $\ell$ has parametrisation $(a, b, c, d) \in \CC^4$ if we can define $\ell$ by
    $$x = az + b \quad \text{and} \quad y = cz + d.$$
    
    For $i \in \{1, 2, 3\}$, suppose $\ell_i$ has parametrisation $(a_i, b_i, c_i, d_i)$. Consider some line $\ell$ with parametrisation $(a, b, c, d)$. Note that $\ell_i$ and $\ell$ intersect if and only if there exists a $z \in \RR$ for which 
    \begin{equation}\label{eqn:line_intersection}
        a_i z + b_i = a z + b \quad \text{and} \quad c_i z + d_i = cz + d.
    \end{equation}
     Solving \eqref{eqn:line_intersection}, unless $a = a_i$ or $c = c_i$, we have 
    \begin{equation}\label{eqn:variety_defn_c4}
        (a_i - a)(d_i - d) = (c_i - c)(b_i - b).
    \end{equation}
    If at least one of $a = a_i$ and $c = c_i$ holds, then we have the following two cases.
    \begin{enumerate}[label=(\roman*), itemsep=\enumsep]
         \item If $a = a_i$ and $c = c_i$, then $\ell$ and $\ell_i$ are parallel and do not intersect. In this case, note that \eqref{eqn:variety_defn_c4} also holds.
         \item If $a = a_i$ but $c \neq c_i$, we see that  \eqref{eqn:variety_defn_c4} holds only if $b = b_i$. In this case, choosing $z$ to be the unique solution to $c_iz + d_i = cz + d$, we have that \eqref{eqn:line_intersection} is satisfied and $\ell$ and $\ell_i$ intersect. A similar argument shows that the lines intersect also in the case where $a \neq a_i$ and $c = c_i$. 
    \end{enumerate}
    Let $V$ be the set of points $(a, b, c, d) \in \CC^4$ that satisfy \eqref{eqn:variety_defn_c4} for all $i \in \{1, 2, 3\}$. Note that $V$ is a variety. Consider some non-horizontal line $\ell$ in $\CC^3$ with parametrisation $(a, b, c, d) \in V$. For each $i \in \{1, 2, 3\}$, either $\ell$ and $\ell_i$ intersect or $\ell$ and $\ell_i$ are parallel. Since $\ell_1, \ell_2, \ell_3$ are pairwise-skew, there is at most one $(a, b, c, d) \in V$ parallel to $\ell_1$ but intersecting $\ell_2$ and $\ell_3$. More generally, at most three points $(a, b, c, d) \in V$ correspond to lines that are not transversal to all three of $\ell_1, \ell_2, \ell_3$. Call this set of finite exceptions $V_0$. 
    
    Now, let $U \subseteq \CC^7$ be the set of points $(a, b, c, d, x, y, z) \in U$  that satisfy \eqref{eqn:variety_defn_c4} and that $(x, y, z) \in \CC^3$ lies on the line with parametrisation $(a, b, c, d)$. Note that $U$ is a variety. Consider $$U_0 = \{(a, b, c, d, x, y, z) \in U \ | \ (a, b, c, d) \in V_0\}.$$ 
    Since $V_0$ is a set of at most three points, $U_0$ is a set of at most three lines, so it is a one-dimensional variety. By definition, $M = U \setminus U_0$ is a constructible set. 
    
    Let $\pi: \CC^7 \to \CC^3$ be the projection on the last three coordinates. Clearly, $\pi(M)$ is the union of the lines in $\CC^3$ that are transversal to $\ell_1, \ell_2, \ell_3$. In particular, $$\pi(M) = \bigcup_{\ell \in \curves} \ell$$ where $\curves$ is the set of all lines in $\CC^3$ with parametrisation $(a, b, c, d) \in V \setminus V_0$. By Theorem \ref{thm:constructible_projection}, we have that $\pi(M)$ is constructible. In other words, we have that $\pi(M)$ is Zariski open in its closure. Therefore, we can write $\pi(M) = \overline{\pi(M)} \setminus P_0$ where $P_0$ is a variety of dimension at most one and degree $O(1)$.  
    
    The real part of any line in $\CC^3$ is either a real line or a single point. Let $S_0$ be the set of all lines in $\curves$ that contain a single real point. A simple dimension counting argument implies that, for a surface to  contain a two-dimensional family of lines, it must be infinitely ruled by those lines. Thus, any non-planar variety in $\CC^3$ contains at most a one-dimensional family of lines. In particular, $S_0$ is at most a one-dimensional set of lines. It is not difficult to then show that $\Re(S_0)$ is a semi-algebraic set of dimension at most one. Since the real part of any line in $\curves \setminus S_0$ is a real line, this implies that $\Re(\pi(M)) \setminus \Re(S_0) \subseteq \Psi(\ell_1, \ell_2, \ell_3)$. Set $$S_1 = \bigcup_{\ell \in \curves} \Re(\ell) \setminus \Re(S_0).$$ Then, $\Psi(\ell_1, \ell_2, \ell_3) \setminus S_1$ is a set of $O(1)$ horizontal lines transversal to all $\ell_1, \ell_2, \ell_3$. We denote by $H$ the union of these lines. Note that $A = (\overline{\pi(M)} \cup H^*) \setminus (P_0 \cup S_0)$ is a constructible set in  $\CC^3$. Moreover, it is easy to check that $$Z = \left(\Re(\overline{\pi(M)}) \cup H\right) \setminus (\Re(P_0) 
    \cup \Re(S_0)) = \Re(A),$$ 
    is the real part of the complex constructible set $A$, or in other words, $Z$ constructible in $\RR^3$. Hence, $Z$ is Zariski open in $R(\ell_1, \ell_2, \ell_3)$.
\end{proof}

    Combining Proposition \ref{prop:zariski_open} with the fact that a regulus can be partitioned into two distinct rulings, we have the following corollary. 

\begin{corollary}\label{corr:regulus_ruling_line}
     Any line in one ruling of the regulus intersects all but $O(1)$ lines in the other ruling of the regulus.
\end{corollary}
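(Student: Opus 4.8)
The plan is to deduce Corollary \ref{corr:regulus_ruling_line} directly from Proposition \ref{prop:zariski_open}, together with the structural facts about reguli recalled just before that proposition. Fix a regulus $R = R(\ell_1, \ell_2, \ell_3)$ and recall that its lines split into two rulings, with $\ell_1, \ell_2, \ell_3$ lying on one ruling and $\Psi(\ell_1, \ell_2, \ell_3)$ contained in the other. By Proposition \ref{prop:zariski_open} we may write $R = Z \cup Z_0$, where $Z$ is the union of all lines of $\Psi(\ell_1,\ell_2,\ell_3)$ and $Z_0$ is a one-dimensional variety of degree $O(1)$. The key point is that a line $\ell$ belonging to the $\Psi$-ruling either lies entirely inside $Z$, or is almost entirely contained in $Z_0$; in the latter case, since $Z_0$ has bounded degree, there can be only $O(1)$ such lines.

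First I would handle a line $m$ in the ruling that contains $\ell_1, \ell_2, \ell_3$. Every line $\ell$ of the opposite ruling is a transversal of $\ell_1, \ell_2, \ell_3$, hence lies in $\overline{\Psi(\ell_1,\ell_2,\ell_3)}$; the lines of this ruling that actually meet $m$ are exactly those in $\Psi(\ell_1, \ell_2, m)$ (transversals to $\ell_1$, $\ell_2$, and $m$), and applying Proposition \ref{prop:zariski_open} with the triple $(\ell_1, \ell_2, m)$ shows that all but $O(1)$ of them do. Alternatively, and more symmetrically, one observes that two lines from opposite rulings always intersect \emph{except} when one of them has slipped into the degree-$O(1)$ exceptional set: if $m$ is in the $\ell_1,\ell_2,\ell_3$-ruling and $\ell$ is in the $\Psi$-ruling, then $\ell$ fails to meet $m$ only if $\ell$ is one of the $O(1)$ lines contained in the one-dimensional variety obtained from Proposition \ref{prop:zariski_open} applied to a triple containing $m$. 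Since both rulings of $R$ are on an equal footing — each is the "$\Psi$-side" for a suitable triple drawn from the other ruling — the bound $O(1)$ applies to a line from either ruling.

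The cleanest way to organize the argument: given a line $\ell$ in one ruling, pick any three pairwise-skew lines $n_1, n_2, n_3$ from the other ruling (such lines exist because a ruling is an infinite pairwise-skew family, and $\ell$ meets all of them since opposite rulings are mutually transversal). Then $\ell \in \Psi(n_1, n_2, n_3)$, and by Proposition \ref{prop:zariski_open} we have $R = R(n_1,n_2,n_3) = Z' \cup Z'_0$ with $Z'$ the union of lines of $\Psi(n_1, n_2, n_3)$ and $Z'_0$ one-dimensional of degree $O(1)$. Now every line $m$ in the $\{n_i\}$-ruling either meets $\ell$, or else $\ell \not\subseteq Z'$ — but this cannot happen for more than $O(1)$ choices, because the lines of the $\Psi(n_1,n_2,n_3)$-ruling not contained in $Z'$ all lie in $Z'_0$, and a one-dimensional variety of bounded degree contains only $O(1)$ lines. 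Thus all but $O(1)$ lines of the other ruling meet $\ell$, which is exactly the claim.

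The main obstacle is purely bookkeeping: making sure that "the other ruling" is genuinely the $\Psi$-ruling for \emph{some} valid triple, so that Proposition \ref{prop:zariski_open} is applicable regardless of which ruling $\ell$ sits in. This is immediate from the recalled fact that in $R(\ell_1,\ell_2,\ell_3)$ the three defining lines lie on one ruling while $\Psi(\ell_1,\ell_2,\ell_3)$ fills the other, so picking three skew lines from the ruling \emph{not} containing $\ell$ and regenerating the regulus from them puts $\ell$ on the $\Psi$-side; the roles of the two rulings are symmetric. No new estimates are needed — the content is entirely in Proposition \ref{prop:zariski_open} and the definition of a regulus as a degree-$2$ doubly-ruled surface.
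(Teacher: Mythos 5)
Your proposal has the right ingredients --- Proposition \ref{prop:zariski_open} plus the fact that the lines of a single ruling are pairwise disjoint --- but the way you finally assemble them does not prove the corollary, and along the way you lean on exactly the claim the corollary is meant to replace. First, you justify choosing $n_1,n_2,n_3$ in the other ruling with ``$\ell$ meets all of them since opposite rulings are mutually transversal''; that is the erroneous Guth--Katz assertion that Remark \ref{rem:guth_katz_mistake} explicitly refutes (on $\VV(x^2+y^2-z^2-1)$ the two lines through $(1,0,0)$ miss both lines through $(-1,0,0)$), so it cannot be used as a lemma here. It is repairable --- take the three lines of the other ruling passing through three distinct points of $\ell$ --- but as written it is circular. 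Second, and more seriously, your ``cleanest'' organization proves the statement with the quantifiers transposed. Writing $R = R(n_1,n_2,n_3) = Z' \cup Z'_0$ and observing that a line of $\ell$'s ruling not belonging to $\Psi(n_1,n_2,n_3)$ is disjoint from $Z'$ and hence trapped among the $O(1)$ lines of $Z'_0$ shows that all but $O(1)$ lines of $\ell$'s \emph{own} ruling meet the three fixed lines $n_1,n_2,n_3$. It says nothing about whether the fixed line $\ell$ meets a fourth line $m$ of the other ruling: the step ``$m$ either meets $\ell$ or else $\ell \not\subseteq Z'$'' is a non sequitur, since $\ell \subseteq Z'$ only records that $\ell$ is transversal to $n_1,n_2,n_3$.

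The fix is the variant you mention in passing and then abandon: put $\ell$ itself into the defining triple. Choose $\ell_2,\ell_3$ in the same ruling as $\ell$; they are pairwise skew with $\ell$, and $R(\ell,\ell_2,\ell_3)=R$ (the two degree-two surfaces share infinitely many lines, so Theorem \ref{thm:r3_bezout} forces them to coincide). Every line of $\Psi(\ell,\ell_2,\ell_3)$ meets $\ell$ by definition and lies in the opposite ruling, while a line $m$ of the opposite ruling with $m\notin\Psi(\ell,\ell_2,\ell_3)$ is disjoint from every line of $\Psi(\ell,\ell_2,\ell_3)$ (same ruling, pairwise disjoint), hence $m\cap Z=\emptyset$ and $m\subseteq Z_0$; a one-dimensional variety of degree $O(1)$ contains only $O(1)$ lines, so at most $O(1)$ lines of the opposite ruling fail to meet $\ell$. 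This is the argument the paper is gesturing at when it derives the corollary by combining Proposition \ref{prop:zariski_open} with the partition of a regulus into two rulings.
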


\begin{remark}\label{rem:guth_katz_mistake}
    Guth and Katz \cite{GuthKatz} incorrectly claim that every line in one ruling intersects every line in the other ruling of the regulus. For example, consider the hyperboloid of one-sheet $\VV(x^2 + y^2 - z^2 - 1)$. By symmetry, the tangent planes at the points $v_1= (1, 0, 0)$ and $v_2 = (-1, 0, 0)$ are parallel. For $i \in \{1, 2\}$, the two lines through $v_i$ lie on the tangent plane at $v_i$. Therefore, both lines through $v_1$ do not intersect either of the lines through $v_2$. It seems seems possible that this issue disappears when moving to projective space $\RR \mathbb{P}^3$, as it does in this particular example. We chose not to pursue this direction.
\end{remark}

\parag{Reguli and Lines.} Now, we analyse the relationship between reguli and the lines in $\RR^3$ under $\rho$. We begin with an observation due to Guth and Katz.

\begin{lemma}[\cite{GuthKatz}]\label{lem:gk_regulus}
    Suppose that a regulus $R$ contains at least seven lines of $\curves_p^i$. Then, all the lines in one ruling of $R$ lie in $\curves_p^i$. 
\end{lemma}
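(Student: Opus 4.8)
The plan is to use the explicit formula \eqref{eqn:rotation_line} for the lines $\ell_{p,q}$ and to reduce the statement to the classical fact that an infinite family of pairwise‑skew lines lying on a quadric surface must fill out one of its two rulings. By Proposition~\ref{prop:line_properties}(v), reflection in the $xy$‑plane interchanges the cases $i=1$ and $i=2$, so I would assume $i=1$. The first step is to record a convenient polynomial parametrisation of $\curves_p^1$: rewriting \eqref{eqn:rotation_line}, the line $\ell_{p,q}$ meets the $xy$‑plane at a point whose first two coordinates $(a,b)$ determine it completely — namely $\ell_{p,q}$ is the line through $(a,b,0)$ with direction $(b-p_y,\,p_x-a,\,1)$, which I parametrise affinely by $t\mapsto(a+t(b-p_y),\,b+t(p_x-a),\,t)$ — and $(a,b)\mapsto\ell_{p,q}$ is a bijection from $\RR^2$ onto $\curves_p^1$. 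Write $\ell^{(a,b)}$ for this line; by Proposition~\ref{prop:line_properties}(iii),(iv) the lines $\ell^{(a,b)}$ are pairwise skew and exactly one of them passes through each point of $\RR^3$.

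Next I would determine which $\ell^{(a,b)}$ lie on a fixed regulus $R$. Since a regulus is an irreducible quadric surface, write $R=\VV(f)$ with $\deg f=2$. Substituting the parametrisation of $\ell^{(a,b)}$ into $f$ produces a polynomial in $t$ of degree at most $2$, say $f|_{\ell^{(a,b)}}(t)=\gamma_2(a,b)\,t^2+\gamma_1(a,b)\,t+\gamma_0(a,b)$, and $\ell^{(a,b)}\subseteq R$ if and only if $\gamma_0(a,b)=\gamma_1(a,b)=\gamma_2(a,b)=0$. As each coordinate of the parametrisation is linear in $(a,b)$ and $f$ is quadratic, a quick degree count gives $\deg\gamma_j\le2$, so $W:=\VV(\gamma_0,\gamma_1,\gamma_2)\subseteq\RR^2$ parametrises exactly the lines of $\curves_p^1$ that lie on $R$. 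If $W$ were finite, then applying Bézout's theorem (Theorem~\ref{thm:bezout}) to two of the $\gamma_j$ — together with a short separate check when they share a factor — shows that a finite $W$ has only $O(1)$ points; a careful count gives $|W|\le6$, so having at least $7$ lines of $\curves_p^1$ on $R$, i.e. $|W|\ge7$, forces $\dim W\ge1$: the variety $W$ then contains a one‑dimensional component $C$, a plane curve of degree $O(1)$.

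I would then conclude using the geometry of quadrics. The lines $\{\ell^{(a,b)}:(a,b)\in C\}$ are infinitely many, pairwise skew, and all contained in $R$, so their union is a two‑dimensional subset of $R$ and hence, as $f$ is irreducible, Zariski‑dense in $R$. A genuine regulus — a hyperboloid of one sheet or a hyperbolic paraboloid, and in particular not a cone — is doubly ruled, and, using Corollary~\ref{corr:regulus_ruling_line} to rule out an infinite pairwise‑skew family meeting both rulings, this family must lie in a single ruling $\mathcal A$ of $R$ and be Zariski‑dense in it. Finally, $\mathcal A$ is covered by the lines $\{\ell^{(a,b)}:(a,b)\in C\}$ outside a one‑dimensional variety of bounded degree, which contains only $O(1)$ lines (by Theorem~\ref{thm:irred_components}, since a $1$‑dimensional irreducible variety is a line or contains no line); and any line $\ell\in\mathcal A$ that is covered by the $\ell^{(a,b)}$ must in fact equal one of them, because two distinct lines of the same ruling are skew, so $\ell$ cannot merely cross each $\ell^{(a,b)}$ it meets. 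Hence all but $O(1)$ of the lines of the ruling $\mathcal A$ lie in $\curves_p^1$.

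The one genuinely delicate point is upgrading ``all but finitely many lines of $\mathcal A$'' to ``all of them'', which is what the lemma asserts. The missing lines occur as limits of $\ell^{(a,b)}$ as $(a,b)$ escapes to infinity along $C$, and these limits can be horizontal lines, which are never of the form $\ell_{p,q}$ (Proposition~\ref{prop:line_properties}(i)). This is precisely the boundary subtlety flagged in Remark~\ref{rem:guth_katz_mistake}; as suggested there it should disappear upon passing to $\RR\mathbb{P}^3$, and in any case it can be circumvented by first applying a generic isometry of $\RR^3$ so that $R$ carries only $O(1)$ horizontal lines, exactly as in the proof of Proposition~\ref{prop:zariski_open}. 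Once that is handled, the only remaining work is the routine bookkeeping that verifies the bound $|W|\le6$ (equivalently, the choice of the constant $7$) in the second step.
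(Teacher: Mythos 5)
First, note that the paper does not prove this lemma: it is quoted from Guth--Katz, so there is no in-paper argument to compare yours against; I am judging the proposal on its own merits. The first half of your plan is sound. Parametrising $\curves_p^1$ by the trace $(a,b)$ of $\ell_{p,q}$ on the $xy$-plane, restricting the quadric $f$ to $\ell^{(a,b)}$, and concluding that the locus $W=\VV(\gamma_0,\gamma_1,\gamma_2)$ of lines of $\curves_p^1$ contained in $R$ is either a bounded finite set or contains a real curve $C$ is correct (modulo the deferred count that justifies the specific constant $7$), and so is the observation that the resulting infinite family, being pairwise skew by Proposition \ref{prop:line_properties}(iv), must sit inside a single ruling $\mathcal{A}$.

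The gaps are in the second half. First, the assertion that $\bigcup_{(a,b)\in C}\ell^{(a,b)}$ covers $R$ outside a one-dimensional variety of bounded degree is exactly the real-constructibility issue to which the paper devotes all of Proposition \ref{prop:zariski_open}: over $\RR$ the image of the real curve $C$ in the ruling (a copy of $\RR\PP^1$) is only semialgebraic and can a priori be a proper arc, so Zariski density of the union does not give ``all but $O(1)$ lines of $\mathcal{A}$''. You genuinely need to complexify and use Theorem \ref{thm:constructible_projection} before returning to the reals, or better, work in the line coordinates $(a,b,c,d)$ of the proof of Proposition \ref{prop:zariski_open}, where $\curves_p^1$ is the $2$-plane $b+c=p_x$, $d-a=p_y$ and the non-horizontal part of a ruling is a curve of degree $O(1)$, so that seven common points force containment. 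Second, the upgrade from ``all but $O(1)$ lines of the ruling'' to ``all of them'' is not a boundary technicality that a generic isometry can absorb: unlike the conclusion of Proposition \ref{prop:zariski_open}, the conclusion of this lemma is not invariant under isometries of $\RR^3$ (rotating $\RR^3$ destroys the families $\curves_p^i$), so you may not normalise $R$ at the outset. The genuinely exceptional lines are the horizontal lines of $\mathcal{A}$, which are never of the form $\ell_{p,q}$ by Proposition \ref{prop:line_properties}(i); to finish one must show that a ruling containing seven lines of $\curves_p^1$ contains no horizontal line at all. In this paper that fact emerges only from the explicit classification in Propositions \ref{prop:regulus_by_circle} and \ref{prop:regulus_by_line}, where the ruling is shown to be exactly $\{\ell_{p,x}\}_{x\in\gamma}$ for $\gamma$ a circle or a line; your argument as written does not supply it.
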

Recall from Proposition \ref{prop:line_properties} that the set of lines in $\curves_p^1$ is a reflection of the lines in $\curves_p^2$ across the $xy$-plane. Thus, although Guth and Katz only showed the above statement for the case of $\curves_p^1$, it applies to the case of $\curves_p^2$ as well. In the following, we derive theorems only for the case of $\curves_p^1$, and these also hold symmetrically for $\curves_p^2$.

Guth \cite{GuthBook} describes two examples of reguli where one ruling falls entirely within one family $\curves_p^i$ for some $p \in \RR^2$. Let $C(q, r)$ denote the circle centered at $q \in \RR^2$ of radius $r > 0$. We now describe our first construction.

    \begin{figure}[ht]
            \centering
            \includegraphics{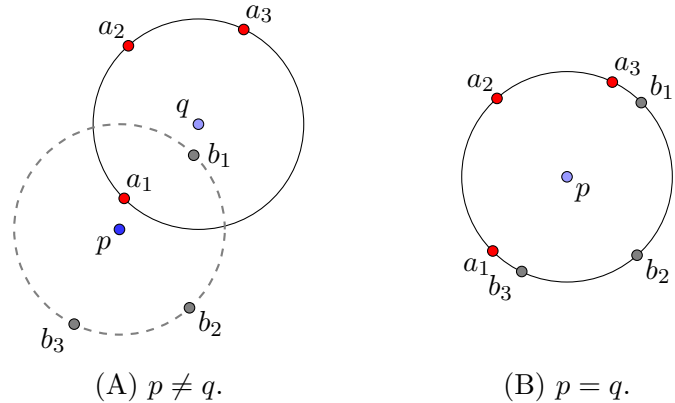}
            \caption{Layout of points satisfying $p - b_i = a_i - q$. In (B), $a', b', c'$ are diametrically opposite from $a, b, c$ respectively.}
            \label{fig:regulus_circle}
    \end{figure} 
    
\begin{prop}\label{prop:regulus_by_circle}
    Consider $p, q \in \RR^2$, and $r > 0$. There exists a regulus $R$ where one ruling consists of the lines of the form $\{\ell_{p, a}\}_{a \in C(q, r)}$, and the other ruling is all lines of the form $\{\ell_{b, q}\}_{b \in C(p, r)}$.
\end{prop}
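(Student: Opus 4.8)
The plan is to produce an explicit regulus out of three lines of the first family and then verify that the two prescribed circle-indexed families are precisely its two rulings. Throughout I identify points of $\RR^3$ with rotations via \eqref{eqn:parametrisation} and use the dictionary from Section \ref{sec:ESGK}: $\ell_{u,v}$ is the line of rotations sending $u$ to $v$, and two such lines meet exactly when the rigid motion pairing them (unique, by Proposition \ref{prop:unique_rigid_motion}) is a genuine rotation rather than a translation. The one elementary computation I will need is this: for $a\in C(q,r)$ and $b\in C(p,r)$ we have $|p-b| = r = |a-q|$, so there is a unique rigid motion $g$ with $g(p)=a$ and $g(b)=q$, and $g$ is a translation precisely when $a-p=q-b$, i.e. when $b=p+q-a$. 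Hence $\ell_{p,a}$ and $\ell_{b,q}$ intersect for every pair $(a,b)$ except along the one-parameter locus $b=p+q-a$ depicted in Figure \ref{fig:regulus_circle}.

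\emph{Construction of $R$.} Pick three distinct points $a_1,a_2,a_3\in C(q,r)$; by Proposition \ref{prop:line_properties}(iv) the lines $\ell_{p,a_1},\ell_{p,a_2},\ell_{p,a_3}$ are pairwise skew, so $R:=R(\ell_{p,a_1},\ell_{p,a_2},\ell_{p,a_3})$ is a regulus. I claim $R$ contains every line of both families. For the second family: for all but at most three values of $b$ (those with $b=p+q-a_i$), the line $\ell_{b,q}$ meets all three of $\ell_{p,a_1},\ell_{p,a_2},\ell_{p,a_3}$, so it lies in $\Psi(\ell_{p,a_1},\ell_{p,a_2},\ell_{p,a_3})\subseteq R$; since $b\mapsto\ell_{b,q}$ varies continuously over the connected circle $C(p,r)$ and $R$ is closed, the remaining (limit) lines lie in $R$ too. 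For the first family: a given $\ell_{p,a}$ meets $\ell_{b,q}$ for all but one value of $b$, so it meets at least three pairwise-skew lines $\ell_{b,q}$ that all lie on a single ruling of $R$; being a transversal of three skew lines on $R$, $\ell_{p,a}$ lies on $R$ as well.

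\emph{Identifying the rulings.} Thus $R$ is a regulus containing infinitely many lines of $\curves^1_p$ (all of $\{\ell_{p,a}\}_{a\in C(q,r)}$) and infinitely many lines of $\curves^2_q$ (all of $\{\ell_{b,q}\}_{b\in C(p,r)}$). By Lemma \ref{lem:gk_regulus}, one ruling of $R$ lies entirely in $\curves^1_p$, say it is $\{\ell_{p,x}:x\in X\}$ with $C(q,r)\subseteq X$; applying the lemma again to $\curves^2_q$, the other ruling is $\{\ell_{x,q}:x\in Y\}$ with $C(p,r)\subseteq Y$. It remains to show $X=C(q,r)$ and $Y=C(p,r)$: the parametrisation $x\mapsto\ell_{p,x}$ of \eqref{eqn:rotation_line} is polynomial and injective, and a ruling of a regulus is an irreducible one-dimensional family of lines, so $X$ is an irreducible one-dimensional algebraic set; since it contains the irreducible curve $C(q,r)$ it equals it, and likewise $Y=C(p,r)$. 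This yields the proposition. An alternative to invoking Lemma \ref{lem:gk_regulus} is to write $R$ down directly as $\VV(f)$: expanding $|g(p)-q|^2=r^2$ in the coordinates $(o_x,o_y,\cot(\theta/2))$ and clearing denominators gives, after the a priori degree-$3$ and degree-$4$ terms cancel, a polynomial $f$ of degree $2$, so $R$ is a quadric — in fact a hyperboloid of one sheet — doubly ruled by the two families.

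The step I expect to be the real obstacle is the last one: matching each circle-indexed family with a \emph{complete} ruling. One must rule out stray lines of a ruling that fail to be of the form $\ell_{p,a}$ with $a\in C(q,r)$, which means absorbing the one-parameter family of translation/parallel pairs from Figure \ref{fig:regulus_circle} and any horizontal lines that could a priori lie on $R$. This is exactly where the irreducibility of a ruling (and, if needed, Proposition \ref{prop:zariski_open} and Corollary \ref{corr:regulus_ruling_line}) together with the short Zariski-closure argument for $X$ and $Y$ do the work; everything else is bookkeeping with Proposition \ref{prop:line_properties}.
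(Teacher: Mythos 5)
Your proposal follows the same skeleton as the paper's proof: build $R = R(\ell_{p,a_1},\ell_{p,a_2},\ell_{p,a_3})$ from three lines of the first family, use the translation criterion $b = p+q-a$ to see that each $\ell_{b,q}$ meets all but a bounded number of the generators, conclude that both circle-indexed families lie in $R$, and then invoke Lemma \ref{lem:gk_regulus} to place them in opposite rulings. Two of your steps deviate locally. First, to absorb the three exceptional lines $\ell_{b_i,q}$ you use a continuity/closedness limit argument along $C(p,r)$, where the paper instead takes a second generating triple $\ell_{p,x_1},\ell_{p,x_2},\ell_{p,x_3}$ and identifies the two reguli via Theorem \ref{thm:r3_bezout}; both are valid, and yours is arguably more elementary. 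Second, and this is the one place your argument does not quite close: to show the ruling contained in $\curves_p^1$ is \emph{exactly} $\{\ell_{p,a}\}_{a\in C(q,r)}$ you assert that the index set $X$ is an irreducible one-dimensional algebraic set and hence equals $C(q,r)$. Neither half of that assertion is justified: the paper never establishes that a ruling of a real regulus is irreducible as a family of lines, and even granting that, the preimage $X$ of an irreducible set under the polynomial injection $x\mapsto\ell_{p,x}$ need not itself be irreducible, so a priori $X$ could contain $C(q,r)$ together with extra components.

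The clean way to finish is the one you relegate to a parenthetical: take any $\ell_{p,x}$ in that ruling; by Corollary \ref{corr:regulus_ruling_line} it meets all but $O(1)$ lines of the other ruling, hence meets some $\ell_{b,q}$ with $b\in C(p,r)$; the intersection point is a rotation $g$ with $g(p)=x$ and $g(b)=q$, so $d(x,q)=d(p,b)=r$ and $x\in C(q,r)$. This is exactly the paper's argument, and it replaces the irreducibility claim entirely. With that substitution your proof is correct; your closing aside about exhibiting $R$ directly as a degree-$2$ variety is not needed for the statement.
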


\begin{proof}
    
    Consider $a_1, a_2, a_3 \in C(q, r)$. Let $b_i$ be the unique point such that $p - b_i = a_i - q$ (see Figure \ref{fig:regulus_circle}). Note that the lines $\ell_{p, a_i}$ and $\ell_{b_i, q}$ do not intersect since the rigid motion corresponding to $(p, b_j, q, a_i)$ corresponds to a translation. 
    
    For all $b \neq b_i$, the lines $\ell_{p, a_i}$ and $\ell_{b, q}$ intersect since $(p, b, q, a_i)$ does not correspond to a translation. Hence, the regulus generated by $\ell_{p, a_1}, \ell_{p, a_2}, \ell_{p, a_3}$ contains all lines $\ell_{b, q}$ such that $b \in C(p, r) \setminus \{b_1, b_2, b_3\}$. Denote this regulus as $R$. Now, pick points $x_1, x_2, x_3 \in C(q, r) \setminus \{a_1, a_2, a_3\}$, and let $y_i$ be the unique point such that $p - y_i = x_i - q$. By a symmetric argument, we have that the regulus generated by $\ell_{p, x_1}, \ell_{p, x_2}, \ell_{p, x_3}$ contains all lines $\ell_{b, q}$ such that $b \in C(p, r) \setminus \{y_1, y_2, y_3\}$. Since the two reguli have infinitely many lines in common, by Theorem \ref{thm:r3_bezout}, we have that $$R(\ell_{p, x_1}, \ell_{p, x_2}, \ell_{p, x_3}) = R.$$
    Hence, $\{\ell_{b, q}\}_{b \in C(p, r)}$ lies in $R$. Note that every line of $\{\ell_{p, a}\}_{a \in C(q, r)}$ intersects infinitely many lines from $\{\ell_{b, q}\}_{b \in C(p, r)}$. Thus, the lines of $\{\ell_{p, a}\}_{a \in C(q, r)}$ are also contained in $R$.
    
    By Lemma \ref{lem:gk_regulus}, we know that $\{\ell_{p, a}\}_{a \in C(q, r)}$ lie in one ruling, and $\{\ell_{b, q}\}_{b \in C(p, r)}$ lie in the other. More specifically, the first ruling lies entirely in $\curves_p^1$ and the second ruling lies entirely in $\curves_q^2$.
    
    Suppose $\ell_{p, x}$ lies on the first ruling. By Corollary \ref{corr:regulus_ruling_line}, we have that $\ell_{p, x}$ intersects all but a constant number of lines in the second ruling. Suppose $\ell_{p, x}$ intersects some $\ell_{b, q}$ for $b \in C(q, r)$. In other words, $d(x, q) = d(p, a) = r$, and we have $x \in C(q, r)$. Therefore, the first ruling is exactly $\{\ell_{p, a}\}_{a \in C(q, r)}$. Similarly, the second ruling is exactly $\{\ell_{b, q}\}_{b \in C(p, r)}$. 
\end{proof}

\begin{remark}
    Since $\ell_{p, a}$ intersects the $xy$-plane at the point $\left(\frac{p_x + a_x}{2}, \frac{p_y + a_y}{2}, 0\right)$, it is easy to verify that the intersection of $$\bigcup_{a \in C(q, r)}\ell_{p, a}$$ with the $xy$-plane is a circle. Since the cross-section of a hyperbolic paraboloid cannot be a closed curve, the regulus described above is a hyperboloid of one sheet.
\end{remark}

For the second construction, recall from Proposition \ref{prop:ESGK_horizontal} that every horizontal line is of the form $S(\lambda_1, \lambda_2)$ for two non-parallel oriented  lines in $\RR^2$. 
        
        \begin{figure}[ht]
            \centering
            \includegraphics{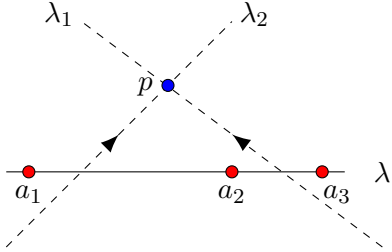}
            \caption{Example of two oriented lines $\lambda_1$ and $\lambda_2$ through $p$.}
            \label{fig:regulus_collinear}
        \end{figure}

\begin{prop}\label{prop:regulus_by_line}
     Consider some $p \in \RR^2$ and some oriented line $\lambda \subset \RR^2$. There exists a regulus where one ruling consists of lines of the form $\{\ell_{p, a}\}_{a \in \lambda}$, and the other ruling is all the lines of the form $$\{S(\lambda', \lambda)\ |\ \mbox{$\lambda'$ is an oriented line containing $p$, not parallel to $\lambda$}\}.$$
     By symmetry, there also exists a regulus where one ruling consists of lines of the form $\{\ell_{b, p}\}_{b \in \lambda}$, and the other ruling consists of the lines of the form $$\{S(\lambda, \lambda')\ |\ \lambda' \mbox{ is an oriented line containing $p$, not parallel to $\lambda$}\}.$$ 
\end{prop}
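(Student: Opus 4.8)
The plan is to follow the template of the proof of Proposition~\ref{prop:regulus_by_circle}. I treat the first regulus in detail; the second then comes from reflecting across the $xy$-plane. Fix three distinct points $a_1,a_2,a_3\in\lambda$; by Proposition~\ref{prop:line_properties}(iv) the lines $\ell_{p,a_1},\ell_{p,a_2},\ell_{p,a_3}$ are pairwise skew, so they span a regulus $R$, and I write $\mathcal A$ for the ruling through these three lines and $\mathcal B$ for the ruling containing the common transversals $\Psi(\ell_{p,a_1},\ell_{p,a_2},\ell_{p,a_3})$.

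The engine of the argument is one elementary fact: for any oriented line $\lambda'$ through $p$ not parallel to $\lambda$, and any $a\in\lambda$, the line $S(\lambda',\lambda)$ meets $\ell_{p,a}$ in exactly one point. Indeed, writing $\theta$ for the angle subtended by $\lambda'$ and $\lambda$, there is a unique rotation $g$ of angle $\theta$ taking $p$ to $a$; since $p\in\lambda'$, this $g$ carries $\lambda'$ to the oriented line through $a$ with the direction of $\lambda$, which equals $\lambda$ because $a\in\lambda$. Hence $g\in S(\lambda',\lambda)\cap\ell_{p,a}$, and the two lines are distinct since one is horizontal (Proposition~\ref{prop:ESGK_horizontal}) and the other is not (Proposition~\ref{prop:line_properties}(i)). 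Taking $a=a_i$ shows every such $S(\lambda',\lambda)$ is transversal to all three generators, hence lies in $\mathcal B$; as $\lambda'$ sweeps a one-parameter family the angle $\theta$ varies, so these horizontal lines occupy distinct horizontal planes and $\mathcal B$ contains infinitely many of them. Taking $a$ arbitrary shows every $\ell_{p,a}$, $a\in\lambda$, meets infinitely many lines of $\mathcal B$; since a line not lying on the degree-two surface $R$ meets at most two lines of any ruling (B\'ezout), each $\ell_{p,a}$ lies on $R$, and because it meets a line of the pairwise-skew ruling $\mathcal B$ it must belong to $\mathcal A$. Now $R$ contains the infinitely many lines $\{\ell_{p,a}\}_{a\in\lambda}$ of $\curves_p^1$, so Lemma~\ref{lem:gk_regulus} forces one whole ruling into $\curves_p^1$, and it must be $\mathcal A$ since $\mathcal B$ contains horizontal lines.

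It remains to identify the two rulings exactly. Every line of $\mathcal A$ is some $\ell_{p,x}$; by Corollary~\ref{corr:regulus_ruling_line} it meets all but $O(1)$ lines of $\mathcal B$, in particular some $S(\lambda',\lambda)$, and the witnessing rotation sends $p$ into $\lambda$, so $x\in\lambda$; thus $\mathcal A=\{\ell_{p,a}\}_{a\in\lambda}$. For $\mathcal B$, take $m\in\mathcal B$; by Corollary~\ref{corr:regulus_ruling_line} it meets $\ell_{p,a}$, at a rotation $g_a$ with $g_a(p)=a$, for infinitely many $a\in\lambda$. If $m$ were non-horizontal it would equal $\ell_{r,s}$ for some $r,s$, but then $\{g(p):g\in m\}$ lies on a circle about $s$ of radius $d(p,r)$ (a rigid motion taking $p\mapsto q$, $r\mapsto s$ needs $d(p,r)=d(q,s)$), which cannot contain infinitely many points of the line $\lambda$ (using Proposition~\ref{prop:line_properties}(iv) when $r=p$); so $m$ is horizontal, and all $g_a$ share a common rotation angle $\beta\in(0,2\pi)$. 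Fixing one index $a_0$, each $g_a\circ g_{a_0}^{-1}$ has angle $0$, hence is the translation $\tau_{a-a_0}$, so $g_a=\tau_{a-a_0}\circ g_{a_0}$; letting $\mu'$ be the oriented line through $p$ with $g_{a_0}(\mu')=\lambda$ (its direction is $\lambda$'s rotated by $-\beta$, and its image passes through $a_0\in\lambda$ parallel to $\lambda$, so equals $\lambda$), and translating along $\lambda$, we get $g_a(\mu')=\lambda$ for all these $a$, whence $m=S(\mu',\lambda)$ with $\mu'\not\parallel\lambda$ (else $\beta=0$). So $\mathcal B=\{S(\lambda',\lambda):\lambda'\ni p,\ \lambda'\not\parallel\lambda\}$. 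Finally, reflection across the $xy$-plane is an isometry of $\RR^3$ carrying $\ell_{p,a}$ to $\ell_{a,p}$ (Proposition~\ref{prop:line_properties}(v)) and, as it corresponds to $g\mapsto g^{-1}$, carrying $S(\lambda',\lambda)$ to $S(\lambda,\lambda')$; applying it to $R$, $\mathcal A$, $\mathcal B$ produces the second regulus.

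The step I expect to be hardest is the exact determination of the ruling $\mathcal B$: ruling out non-horizontal lines in $\mathcal B$ and then extracting the single oriented line $\mu'$ with $m=S(\mu',\lambda)$ via the composition-of-rotations bookkeeping. Establishing the basic intersection fact also demands care, since one must track the precise rotation-angle and orientation conventions baked into \eqref{eqn:rotation_line} and Proposition~\ref{prop:ESGK_horizontal}.
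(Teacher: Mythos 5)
Your proof is correct and follows the same architecture as the paper's: fix three points $a_1,a_2,a_3\in\lambda$, show each $S(\lambda',\lambda)$ with $p\in\lambda'$ is a common transversal of the generators, and then use Lemma \ref{lem:gk_regulus} together with Corollary \ref{corr:regulus_ruling_line} to pin down the ruling $\{\ell_{p,a}\}_{a\in\lambda}$, finishing by reflection across the $xy$-plane. Two local steps differ, both acceptably. First, to get every $\ell_{p,a}$ onto $R$ you count intersection points of $\ell_{p,a}$ with the degree-two surface directly, whereas the paper re-runs the construction for arbitrary triples $x_1,x_2,x_3\in\lambda$ and invokes Theorem \ref{thm:r3_bezout}; both work. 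Second, and more substantively, your identification of the transversal ruling $\mathcal B$ is more complete than the paper's: the paper only excludes horizontal lines of the form $S(\lambda'',\lambda)$ with $p\notin\lambda''$, tacitly assuming every horizontal line in the ruling admits a representation whose second coordinate is $\lambda$, which is not automatic from Proposition \ref{prop:ESGK_horizontal} (a horizontal line at height $\cot(\beta/2)$ whose projection points the wrong way need not be any $S(\cdot,\lambda)$). Your composition-of-rotations argument --- extracting the common angle $\beta$, writing $g_a=\tau_{a-a_0}\circ g_{a_0}$, and reconstructing $\mu'=g_{a_0}^{-1}(\lambda)\ni p$ so that $m=S(\mu',\lambda)$ --- derives this representation from the transversality data itself, so it closes that small gap rather than inheriting it.
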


\begin{proof} 
    It suffices to show the statement for the first construction since the second construction is a reflection of the first across the $xy$-plane. 

    Arbitrarily choose three distinct points $a_1, a_2, a_3 \in \lambda$, as shown in Figure \ref{fig:regulus_collinear}. By Proposition \ref{prop:line_properties}, the lines $\ell_{p, a_1}, \ell_{p, a_2}, \ell_{p, a_3}$ are pairwise-skew. We denote as $R$ the regulus $R(\ell_{p, a_1}, \ell_{p, a_2}, \ell_{p, a_3})$.
   
    First, we show that $\Psi(\ell_{p,a_1}, \ell_{p,a_2}, \ell_{p,a_3})$ does not contain non-horizontal lines. Consider some line $\ell_{x, y}$. If a line $\ell_{x, y}$ is transversal to the three lines $\ell_{p, a_i}$, we have $$d(p, x) = d(a_1, y) = d(a_2, y) = d(a_3, y).$$ However, since $a_1, a_2, a_3$ are distinct points that are collinear, there is no point $y$ that is equidistant from all three, leading to a contradiction. Hence, no line $\ell_{x, y}$ is in $R$. 
    
    Consider the horizontal line $S(\lambda', \lambda)$, where $\lambda'$ is some oriented line containing $p$ and not parallel to $\lambda$. For each $a_i$, let $g_i$ be the rigid motion obtained by first translating the plane to map $p$ to $a_i$, and then rotating the plane around $a_i$ to map $\lambda'$ to $\lambda$. Clearly, $g_i\in \ell_{p, a_i}$ and $g_i \in S(\lambda', \lambda)$. In other words, $S(\lambda', \lambda)$ intersect all three lines, and is contained $\Psi(\ell_{p, a_1}, \ell_{p, a_2}, \ell_{p, a_3})$. Hence, the set of lines \begin{equation}\label{eqn:paraboloid_ruling}
        \{S(\lambda', \lambda) \ |\ \mbox{$\lambda'$ is an oriented line containing $p$, not parallel to $\lambda$}\}
    \end{equation}
    is contained in the first ruling of the regulus.  
    
    Repeating the same argument for any $x_1, x_2, x_3 \in \lambda$, one can see that $\Psi(\ell_{p, x_1}, \ell_{p,x_2},\ell_{p,x_3}) = \Psi(\ell_{p,a_1}, \ell_{p,a_2},\ell_{p,a_3})$. By Theorem \ref{thm:r3_bezout}, the two triples of lines define the regulus $R$. 
    By Lemma \ref{lem:gk_regulus}, this ruling lies in $\curves_p^1$. Moreover, by Corollary \ref{corr:regulus_ruling_line}, any line in this ruling intersects all but a finite number of lines in the other ruling. Suppose $\ell_{p,v}$ intersects $S(\lambda', \lambda)$, where $\lambda'$ contains $p$. Let $g = \ell_{p, v} \cap S(\lambda', \lambda)$. Since $g$ is a rigid motion, we have $$d(v, \lambda) = d(p, \lambda') = 0.$$ Therefore, this ruling is exactly $\{\ell_{p, a}\}_{a \in \lambda}$. 
    
    Finally, we show that no other line lies in the ruling containing \eqref{eqn:paraboloid_ruling}. We already showed above that no line of the form $\ell_{x, y}$ can intersect three distinct lines $\ell_{p, a_1}, \ell_{p, a_2}, \ell_{p, a_3}$ where $a_i \in \lambda$. That is, it remains to consider only horizontal lines. Suppose $\lambda'' \neq \lambda$ is a line that does not contain $p$. For any $g \in S(\lambda'', \lambda)$, since $g$ is a bijection between $\lambda''$ and $\lambda$, $g(p) \notin \lambda$. In other words, $S(\lambda'', \lambda)$ cannot be transversal to any $\ell_{p, x}$ where $x \in \lambda$. Hence, this ruling cannot contain any line not in \eqref{eqn:paraboloid_ruling}. 
\end{proof}

\begin{remark}
    Since all the lines in one ruling of the above regulus are parallel to the $xy$-plane (recall that $S(\lambda', \lambda)$ is horizontal), the above regulus is a hyperbolic paraboloid.
\end{remark}

\begin{remark}
    As an example, consider the construction in Figure \ref{fig:orthogonal_example}. The corresponding set of lines $\lines$ that we obtain through the modified ESGK reduction results in $2m$ reguli of the form described in Proposition \ref{prop:regulus_by_line}, which contain $n$ lines each. 
\end{remark}

    We are now ready to prove Lemma \ref{lem:regulus_bound_main}. We restate it here for convenience. 

    \begingroup
    \def\thetheorem{\ref{lem:regulus_bound_main}}
    \begin{lemma}
    For $\lines$ as defined in \eqref{eqn:ESGK_lines_defn}, at least one of the following holds.
    \begin{enumerate}[label=(\roman*), itemsep = \enumsep]
        \item $D(\pts, \qts) = \Omega(\sqrt{mn})$.
        \item Every regulus in $\RR^3$ contains $O(\sqrt{mn})$ lines of $\lines$.
    \end{enumerate}
    \end{lemma}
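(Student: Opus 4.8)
The plan is to show that there exist absolute constants $C,c>0$ such that, whenever a regulus $R$ contains more than $C\sqrt{mn}$ lines of $\lines$, one has $D(\pts,\qts)\geq c\sqrt{mn}$; the dichotomy of the lemma then follows immediately. So suppose such an $R$ exists. Since $R$ is doubly ruled and every line of $\lines$ lying on $R$ belongs to one of its two rulings, one ruling --- call it $A$, with the other $B$ --- contains more than $\tfrac{1}{2}C\sqrt{mn}$ lines of $\lines$. Now $\lines$ is the union of the $2m$ pencils $\{\lines_p^1\}_{p\in\pts}\cup\{\lines_p^2\}_{p\in\pts}$, so by pigeonhole some pencil --- say $\lines_{p^*}^1$, the case of a pencil $\lines_{p^*}^2$ being handled symmetrically via the reflection across the $xy$-plane of Proposition~\ref{prop:line_properties}(v) --- has more than $\tfrac{C\sqrt{mn}}{4m}=\tfrac{C}{4}\sqrt{n/m}\geq\tfrac{C}{4}$ of its lines lying in $A$, hence at least $7$ provided $C\geq 28$. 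These $7$ lines lie in $\curves_{p^*}^1$, so by Lemma~\ref{lem:gk_regulus} one ruling of $R$ lies entirely in $\curves_{p^*}^1$; it must be $A$, since a line of $A\cap\curves_{p^*}^1$ is skew (Proposition~\ref{prop:line_properties}(iv)) to every line of the opposite ruling, which would contradict Corollary~\ref{corr:regulus_ruling_line} if that opposite ruling were the one contained in $\curves_{p^*}^1$. Hence $A=\{\ell_{p^*,x}: x\in\gamma\}$ for some $\gamma\subseteq\RR^2$, the parametrisation $x\mapsto\ell_{p^*,x}$ being injective by Proposition~\ref{prop:line_properties}(ii).

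The crux is to show that $\gamma$ lies on a line or a circle. Fix any line $\ell$ of the opposite ruling $B$; by Corollary~\ref{corr:regulus_ruling_line}, $\ell$ meets all but $O(1)$ of the lines $\ell_{p^*,x}$, $x\in\gamma$. If $\ell$ is non-horizontal then $\ell=\ell_{s,t}$ for some $s,t\in\RR^2$ by Proposition~\ref{prop:line_properties}(ii), and every point $g$ of $\ell$ is a rotation taking $s$ to $t$; if such a $g$ also lies on $\ell_{p^*,x}$ then $g(p^*)=x$ and $d(x,t)=d(g(p^*),g(s))=d(p^*,s)$, whence $x$ lies on the circle $C(t,d(p^*,s))$. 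If $\ell$ is horizontal then $\ell=S(\lambda_1,\lambda_2)$ by Proposition~\ref{prop:ESGK_horizontal}, and (by the analysis in the proof of that proposition) every point $g$ of $\ell$ is a rotation by a single angle $\theta\in(0,2\pi)$ about a centre $O$ that ranges over a fixed line of $\RR^2$; writing $R_\theta$ for rotation of $\RR^2$ by angle $\theta$ about the origin, we have $g(p^*)=R_\theta p^*+(\mathrm{Id}-R_\theta)O$, and since $\mathrm{Id}-R_\theta$ is invertible the point $g(p^*)$ ranges over a fixed line of $\RR^2$, so every such $x$ lies on that line. Either way, all but $O(1)$ points of $\gamma$ lie on a fixed $1$-dimensional real variety $\Gamma$ of degree at most $2$; this is consistent with the explicit descriptions of such reguli in Propositions~\ref{prop:regulus_by_circle} and~\ref{prop:regulus_by_line}, from which one can alternatively check that $\gamma$ is exactly such a $\Gamma$.

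Finally, every line of $\lines$ in ruling $A$ equals $\ell_{p^*,x}$ for some $x\in\gamma$, and by Proposition~\ref{prop:line_properties}(ii) it lies in $\lines^1$ only if $x\in\qts$ and in $\lines^2$ only if $x\in\pts$, with distinct lines giving distinct $x$. Combining this with the previous step, $|\pts\cap\Gamma|+|\qts\cap\Gamma|>\tfrac{1}{2}C\sqrt{mn}-O(1)$, so after possibly interchanging $\pts$ and $\qts$ --- legitimate since $D(\pts,\qts)=D(\qts,\pts)$ and $2\leq m\leq n$ --- we may assume $|\qts\cap\Gamma|=\Omega(\sqrt{mn})$. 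Applying Lemma~\ref{lem:curve_bipartite_bound} with the first set equal to $\pts$ (of size $m\geq 2$) and the second equal to $\qts\cap\Gamma$ (which lies on the degree-$\leq 2$ curve $\Gamma$) gives $D(\pts,\qts)\geq D(\pts,\qts\cap\Gamma)=\Omega(|\qts\cap\Gamma|)=\Omega(\sqrt{mn})$, which is conclusion (i). The step I expect to be the main obstacle is the horizontal case of the crux: extracting from the $S(\lambda_1,\lambda_2)$-description of a horizontal line that the image of $p^*$ under the one-parameter family of rotations it parametrises lies on a single line of $\RR^2$, together with the bookkeeping of the $O(1)$ lines of $A$ that $\ell$ may fail to meet.
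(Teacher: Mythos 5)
Your proposal is correct, and its skeleton matches the paper's: pigeonhole the lines of $\lines$ on the regulus into the $2m$ pencils $\lines_p^i$, find a pencil contributing many lines to one ruling, show that ruling has the form $\{\ell_{p^*,x}\}_{x\in\gamma}$ with $\gamma$ on a line or circle, and finish with Lemma \ref{lem:curve_bipartite_bound}. Where you diverge is the middle, structural step. The paper observes that three distinct base points $a,b,c$ of lines $\ell_{p,a},\ell_{p,b},\ell_{p,c}$ on a common ruling are either collinear or concyclic, and then invokes the explicit constructions of Propositions \ref{prop:regulus_by_circle} and \ref{prop:regulus_by_line} to identify the ruling exactly. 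You instead fix a single line $\ell$ of the opposite ruling and use Corollary \ref{corr:regulus_ruling_line}: if $\ell=\ell_{s,t}$ then every $x$ with $\ell_{p^*,x}\cap\ell\neq\emptyset$ satisfies $d(x,t)=d(p^*,s)$, and if $\ell$ is horizontal then $g\mapsto g(p^*)$ maps $S(\lambda_1,\lambda_2)$ into a line; either way all but $O(1)$ of $\gamma$ lands on a fixed degree-$\le 2$ curve. This buys you a shorter route that needs only Lemma \ref{lem:gk_regulus} and Corollary \ref{corr:regulus_ruling_line} rather than the full two-ruling characterizations (which the paper wants anyway for expository reasons), at the cost of only controlling $\gamma$ up to $O(1)$ exceptions --- which is harmless for the count. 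Your argument for why the ruling contained in $\curves_{p^*}^1$ must be the rich one (skewness within $\curves_{p^*}^1$ versus Corollary \ref{corr:regulus_ruling_line}) is a point the paper glosses over, and your accounting of lines of $\lines^2$ of the form $\ell_{p^*,x}$ with $p^*\in\pts\cap\qts$, $x\in\pts$ is likewise more careful than the paper's. Two trivial loose ends you could mention: the degenerate case $s=p^*$ in the non-horizontal transversal cannot occur (it would force $\ell$ to be skew to all but one line of the infinite ruling $A$, contradicting Corollary \ref{corr:regulus_ruling_line}), and the hypothesis $|A|\ge 2$ of Lemma \ref{lem:curve_bipartite_bound} is available since $m\ge 2$.
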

    \addtocounter{theorem}{-1}
    \endgroup
    
\begin{proof}
    Consider some regulus $R$. In the case where every family $\lines_p^i$ only has at most four lines that lie in the regulus, there are $O(m)$ lines in $R$, and we are done.
    
    Consider some family $\lines_p^1$ and a regulus $R$ such that $R$ contains at least five lines from this family. Consider three distinct lines, $\ell_{p, a}, \ell_{p, b}, \ell_{p, c}$ that lie on the same ruling of $R$. Note that $R = R(\ell_{p, a}, \ell_{p, b}, \ell_{p, c})$ for any three distinct points in $\RR^2$, they are either collinear or lie on a common circle. First, assume that $a, b, c$ lie on some circle $C(q, r)$ for some $q \in \RR^2$. By proposition \ref{prop:regulus_by_circle}, the ruling containing $\ell_{p, a}, \ell_{p, b}, \ell_{p, c}$ consists of lines of the form $\{\ell_{p, x}\}_{x \in C(q, r)}.$ Hence, the number of lines in this ruling corresponds to the number of points in $\qts$ that lie on $C(q, r)$. Similarly, if $a, b$ and $c$ lie on a line $\lambda$, Proposition \ref{prop:regulus_by_line} implies that this ruling consists of lines of the form $\{\ell_{p, x}\}_{x \in \lambda}$. Hence, the number of lines in this ruling corresponds to the number of points in $\qts$ that lie on $\lambda$. A symmetric argument can be applied in the case where some regulus contains at least five lines from a family $\lines_p^2$. 
    
    We assume that $D(\pts, \qts) = O(\sqrt{mn})$, since otherwise we are done. By Lemma \ref{lem:curve_bipartite_bound}, every circle or line contains $O(\sqrt{mn})$ points. Therefore, for either of the above cases, we have that $R$ contains $O(\sqrt{mn})$ lines, as desired. 
\end{proof}


\section*{Acknowledgements}
The author would like to sincerely thank Adam Sheffer for introducing her to this problem, for his patient and enthusiastic guidance throughout this research endeavour, and for reviewing drafts of this work and giving invaluable feedback. The author would also like to thank Frank de Zeeuw and Pablo Soberon for helpful discussions.

\nocite{*}
  \bibliography{ms}
  \bibliographystyle{alpha}
  
\end{document}